\numberwithin{figure}{section}
\newcommand{\E}{\mathcal{E}}
\newcommand{\T}{\mathbb{T}}
\newcommand{\C}{\mathcal{C}}
\newcommand{\LL}{\mathcal{L}}
\DeclareMathOperator*{\argmin}{arg\,min}
\newtheorem{theorem}{Theorem}
\newtheorem{lemma}{Lemma}
\begin{document}
	

\title[Scheme for Maxwell-Stefan system]{An energy stable and positivity-preserving scheme for the Maxwell-Stefan diffusion system}
\author[X. Huo]{Xiaokai Huo}
\address{Computer, Electrical and Mathematical Science and Engineering Division,
King Abdullah University of Science and Technology (KAUST),Thuwal 23955, Saudi Arabia} \email{xiaokai.huo@kaust.edu.sa}
\author[H. Liu]{Hailiang Liu}
\address{Iowa State University, Mathematics Department, Ames, IA 50011 }
\email{hliu@iastate.edu}
\author[A.E. Tzavaras]{Athanasios E. Tzavaras}
\address{Computer, Electrical and Mathematical Science and Engineering Division,
King Abdullah University of Science and Technology (KAUST),Thuwal 23955, Saudi Arabia} \email{athanasios.tzavaras@kaust.edu.sa}
\author[S. Wang]{Shuaikun Wang}
\address{Computer, Electrical and Mathematical Science and Engineering Division,
King Abdullah University of Science and Technology (KAUST),Thuwal 23955, Saudi Arabia} \email{shuaikun.wang@kaust.edu.sa}

\begin{abstract}
We develop a new finite difference scheme for the Maxwell-Stefan diffusion system. The scheme is conservative, energy stable and positivity-preserving.  These nice properties stem from a variational structure and are proved by reformulating the finite difference scheme into an equivalent optimization problem. 
The solution to the scheme emerges as the minimizer of the optimization problem, and as a consequence  energy stability and positivity-preserving properties are obtained.
\end{abstract}

\subjclass[2000]{35K55,  35Q79, 65M06, 35L45}
\keywords{ Finite difference, Maxwell-Stefan systems, cross-diffusion, Positivity-preserving, Energy dissipation}

\maketitle

\section{Introduction}
Cross diffusion occurs in multicomponent systems, such as ionic liquids, wildlife populations, gas mixtures, tumor growth, etc \cite{jungel2016entropy,krishna1997maxwell}. In these multicomponent systems, the diffusion happens not only in the direction from high concentration to low concentration, but also in the opposite direction due to cross diffusion. In such cases, diffusion can not be described by Fick's diffusion law and the Maxwell-Stefan diffusion model can be
used instead. The Maxwell-Stefan model assumes the friction between two components is proportional to their difference in velocity and molecular fractions. It is widely used in modeling multicomponent systems.


In this work, we consider the Maxwell-Stefan diffusion system for a $n$-component mixture on the  torus $\mathbb{T}^d$, which reads for $i = 1, ... , n$,
\begin{align}
	\partial_t \rho_i +\nabla \cdot (\rho_i v_i) &= 0, \label{eq:f1}\\
	- \sum_{j=1}^n b_{ij} \rho_j (v_i-v_j) &= \nabla \log \rho_i - \frac{1}{\sum_{j=1}^n\rho_j} \sum_{j=1}^n \rho_j \nabla \log \rho_j , \label{eq:f2}\\
	\sum_{j=1}^n \rho_j v_j &= 0.\label{eq:consm}
\end{align}
Here $x\in\mathbb{T}^d$, $\rho_i=\rho_i(x,t)$ and $v_i=v_i(x,t)$ are the density and velocity of the $i$-th component.
The initial conditions are taken to be
\begin{align*}
	\rho_i(x,0) = \rho_{i0}(x), \; i= 1,\ldots,n,
\end{align*}
and we assume that 
\begin{equation}
\label{hypdata}
 \rho_{i0}(x) > 0 \, , \quad \mbox{and} \quad \sum_{j=1}^n \rho_{j0}(x) = 1 \quad \mbox{for $x \in \mathbb{T}^d$}.
\end{equation}
Solutions of \eqref{eq:f1} conserve the total mass 
$
	\partial_t \sum_{i=1}^n  \rho_i +\nabla \cdot \sum_{i=1}^n  \rho_i v_i = 0 \, .
$
Condition \eqref{eq:consm} imposes that the average velocity of the mixture is $v_{av} \equiv  0$ and thus the total
density $\sum_{i=1}^n \rho_i$ is conserved at each  $x \in \mathbb{T}^d$. Hypothesis \eqref{hypdata} then fixes
the total mass to
\begin{align}\label{eq:consrho}
		\sum_{j=1}^n \rho_j (x,t) = 1, \quad \text{ for } x\in\mathbb{T}^d, \; t>0  \, .
\end{align}
Accordingly,  \eqref{eq:f1}-\eqref{eq:consm} reduces to
\begin{align}
	&\partial_t \rho_i +\nabla \cdot (\rho_i v_i) = 0, \\
	&\nabla \rho_i  = - \sum_{j=1}^n b_{ij} \rho_i \rho_j(v_i-v_j), 
\end{align}
$i=1,\ldots,n$, which is the usual form of the Maxwell-Stefan diffusion system.

The system \eqref{eq:f1}-\eqref{eq:consm} can be obtained as the high-friction limit of the multicomponent Euler equations \cite{huo2019high}.
\begin{equation}
\label{eulerfr}
\begin{aligned}
	&\partial_t \rho_i + \nabla \cdot (\rho_i v_i) =0, \\
	&\partial_t (\rho_i v_i) + \nabla \cdot (\rho v_i v_i) + \frac{\rho_i}{\varepsilon}   \nabla \frac{\delta F(\rho)}{\delta \rho_i} = - \frac{1}{\varepsilon} \sum_{j=1}^n b_{ij} \rho_i\rho_j(v_i-v_j),
\end{aligned}
\end{equation}
when the total momentum (or the mean velocity) is zero.
Here the energy functional
\begin{align}\label{eq:fde}
	F(\rho) = \sum_{i=1}^n \int_{\mathbb{T}^d} \rho_i(x) \log \rho_i(x) dx.	
\end{align}
It was proved in \cite{huo2019high} that, when the total momentum is zero, the system \eqref{eulerfr} converges to \eqref{eq:f1}-\eqref{eq:consm} 
in the high-friction limit  $\varepsilon \to 0$. 
Moreover, \eqref{eq:f1}-\eqref{eq:consm} can be regarded as a gradient flow for $F(\rho)$.

This raises the following question: Given densities $\rho^0 = (\rho_{i}^0)_{i=1}^n$,
$\rho^1 = (\rho_{i}^1)_{i=1}^n$, with $\sum_i \rho_{i}^0 = \sum_i \rho_{i}^1= 1$, consider the minimization problem
\begin{align}
	 \min_{ (\rho,v) \in K } \int_0^1  \! \!  \int_{\mathbb{T}^d} \sum_{i,j=1}^n  \frac{1}{4} b_{ij} \rho_i\rho_j(v_i-v_j)^2 dxdt 
\label{blockmini}
\end{align}
over the set
\begin{align*}
K = \bigg \{ \rho = (\rho_1, ... , \rho_n) \, , \;  &v = (v_1 , ... , v_n) \; : \;  \partial_t \rho_i + \nabla \cdot (\rho_i v_i)=0, ~~i=1,\ldots,n,
\\
	&\sum_{j=1}^n \rho_j  v_j=0 \, , \; \;  \rho_i(0,x)=\rho_i^0(x) \, , \; \rho_i(1,x)=\rho_i^1(x)   \bigg \}.
\end{align*}
The problem \eqref{blockmini} as the minimum of the frictional work is motivated by the well-known characterization of the Wasserstein distance  in a one-component fluid obtained by Benamou-Brenier  \cite{benamou2000computational}. 
The study of this question will be given in a forthcoming work.  The minimization \eqref{blockmini} and 
the gradient structure of \eqref{eq:f1}-\eqref{eq:consm} detailed in \cite{huo2019high}, 
motivate us to use the work of friction as a building block for a numerical scheme of variational provenance -- in the spirit of the well known JKO scheme \cite{jko98} -- in order to exploit the gradient structure of the Maxwell-Stefan system. This connection is pursued in the present work.

In this paper, we develop a new implicit-explicit finite difference scheme for the Maxwell-Stefan system \eqref{eq:f1}-\eqref{eq:consm} and prove that the scheme is energy dissipating and positivity preserving, for arbitrary time step and spatial meshes. The scheme in one dimension takes the form:
\begin{align}
	\frac{\rho_{i}^{k+1}-\rho_{i}^k}{\Delta t} + d_h (\hat{\rho}_i^k v_i^{k+1}) &=0,\label{eq:sc1}
	\\
	 - \sum_{j=1}^n b_{ij}  \hat{\rho}_{j}^k (v_{i}^{k+1}-v_{j}^{k+1}) &=
	D_h \log \rho_i^{k+1} - \frac{1}{\sum_{j=1}^n \hat{\rho}_{j}^k}\sum_{j=1}^n \hat{\rho}_{j}^k 
	 D_h \log \rho_j^{k+1},
	\label{eq:sc2}\\
	\sum_{j=1}^n \hat{\rho}_{j}^k v_{j}^{k+1} &= 0
	\label{eq:sc3}
\end{align}
(for the $d$-dimensional case the reader is referred to Section  \ref{sec:mu}). 
The subscript $i$ refers to the $i$-th component and takes values $i = 1, ... , n$, while the superscript $k$ refers to the $k$-th time step.  
 The equations \eqref{eq:sc1}-\eqref{eq:sc3} are computed at spatial grid points $\ell$ or $\ell+\tfrac{1}{2}$ of staggered lattices in a way precised in Section \ref{sec:sc}.
The parameter $\Delta t$ is the time step and $h$ is the mesh size.  The operators $d_h$, $D_h$ are central difference operators, in one dimension, defined by 
\begin{align}\label{eq:Dhdh}
	(d_h f_{i})_\ell = \frac{f_{i,\ell+1/2}-f_{i,\ell-1/2}}{h} ,\quad (D_h f_{i})_{\ell+\frac{1}{2}} = \frac{f_{i,\ell+1}-f_{i,\ell}}{h},
\end{align}
where $\ell=\{1,\ldots,N\}$, $N$  the number of mesh intervals, and we set
	$(\hat{f}_i)_{\ell+\frac12} = \frac12(f_{i,\ell} + f_{i,\ell+1}).$

The scheme is induced by a spatial discretization of the constrained optimization problem ({\it cf.} \eqref{eq:op})
\begin{align}\label{varmin}
	\min_{K} \bigg \{  \int_{\mathbb{T}^d} \Delta t  \sum_{i,j=1}^n \frac{1}{4} b_{ij} \rho_i^k\rho_j^k |u_i-u_j|^2 dx + \int_{\mathbb{T}^d} \sum_{j=1}^n \rho_j \log \rho_j \, dx \bigg \},
\end{align}
where the set $K$ is defined to be 
\begin{align*}
	K=\left\{(\rho,v):~\rho>0,~\frac{\rho_i - \rho_i^k }{\Delta t} + \nabla \cdot (\rho_i^k u_i) = 0,~\sum_{i=1}^n \rho_i^k  u_i =0\right\}.
\end{align*}
The approach is motivated by the JKO-scheme \cite{jko98} and  the Benamou-Brenier interpretation of the Wasserstein distance \cite{benamou2000computational}, the latter
 suggesting
an alternate variational scheme for nonlinear Fokker-Planck equations espoused in \cite{liluwang2020}. The novelty here is (i)  that the limiting problem is a coupled parabolic system
and (ii) that the mechanical friction is a complex interaction among the different components (see \cite{bothe2011maxwell}) that is only captured in bulk by the dissipation  functional \eqref{blockmini}.
Nevertheless, this suffices in capturing the detailed interaction.

We show that there exists a discrete energy function which dissipates along time iterations,  and that the numerical solutions for the densities generated by
the scheme \eqref{eq:sc1}-\eqref{eq:sc3}  preserve the positivity of the initial densities.
The proof uses variational arguments and is based on the reformulation of the finite difference scheme  as an equivalent optimization problem.
An interesting feature is the role played by an elliptic operator $\mathcal{L}_\Phi$ defined in \eqref{ellipticop} and the induced dual norm \eqref{eq:linversenorm}.
The reader familiar with the Wasserstein distance will recognize analogies with duality induced norms \cite{otto01,ottowest06,ottotzavaras08} appearing in the theory of
nonlinear Fokker-Planck equations and induced by the metric tensor generating the Wassertein metric.

A large literature 
\cite{bothe2011maxwell,boudin2012mathematical,geiser2015numerical,giovangigli1998local,jungel2016entropy,jungel2019convergence,jungel2013existence}
employing diverse techniques has provided a basic theory for the Maxwell-Stefan system  \eqref{eq:f1}-\eqref{eq:consm}. 
The existence of global weak solutions is established in \cite{jungel2013existence}, while local existence of strong solutions was shown in \cite{bothe2011maxwell,giovangigli1998local}. 
Explicit finite difference schemes were developed in \cite{boudin2012mathematical,geiser2015numerical}. An implicit Euler Galerkin scheme was developed in
 \cite{jungel2019convergence} for the Maxwell-Stefan system coupled with a Poisson equation. The scheme was also shown to satisfy a discrete entropy inequality.
 However, the property of preserving positivity has not been investigated in the above works. 
 The present work provides a connection between finite difference schemes and variational minimization problems. This approach  is quite robust and we expect that, once the theory for the continuous problem \eqref{varmin} is further developed, it will lead to theoretical results for more complicated
 schemes such as finite elements.

Recently there has been a growing interest in developing energy stable and/or positivity-preserving numerical schemes for nonlinear diffusion equations \cite{chen2019positivity,dong2019positivity,gong2020arbitrarily,huo2019positivity,liu2019positive,liu2020efficient,shen2019new}.  
Positivity-preserving schemes for the Poisson-Nernst-Planck systems were developed in \cite{liu2019positive,liu2020efficient}, where the maximum principle was used to show the non-negativity of the scheme. A series of diffusion equations satisfying a gradient flow structure was considered in \cite{chen2019positivity,dong2019positivity,gong2020arbitrarily,shen2019new},  where energy-stabie schemes were developed for the Cahn-Hillard equations, with positivity-preserving properties proved in \cite{chen2019positivity,dong2019positivity} via optimization formulations. The technique was also used in \cite{huo2019positivity} to prove the positivity and energy-stability properties for a scheme associated to the quantum diffusion equation. Our approach  extends such works to a setting of  systems that are gradient flows
by exploiting the frictional dissipation natural to the Maxwell-Stefan system.

The structure of the paper is as follows: in Section \ref{sec:sc}, we give the details of the numerical scheme and show that it conserves the total mass and is consistent. In Section \ref{sec:op}, we first prove that the numerical scheme is equivalent to an optimization problem, in Theorem \ref{theorem}, and then show the energy stability and positivity-preserving properties in Theorem \ref{thm:prop}.
 We provide the multidimensional scheme in Section \ref{sec:mu} and show that similar properties also hold. Finally, we give some numerical examples to verify the proved properties.

\section{The scheme}\label{sec:sc}
\subsection{Notations} \label{sec:not}
We use notations from \cite{wise2009energy}.
We define the following two grids on the torus $\T=[0,L]$ with mesh size $h=L/N$, where $N$ is the number of mesh intervals:
\begin{align}\label{eq:cedef}
\C:=\{h,2h,\ldots,L\}, \quad \E:=\left\{\frac{h}{2},\frac{3h}{2},\ldots,(N- \tfrac{1}{2})h 
\right\}.
\end{align}
We define the discrete $N$-periodic function spaces as
\[\C_{\rm per}:=\{f:\C \to \mathbb{R}\},\quad \E_{\rm per}:=\{f:\E\to\mathbb{R}\}.\]
Here we call $\C_{\rm per}$ the space of \emph{cell centered functions} and $\E_{\rm per}$ the space of \emph{edge centered functions}.  We use $f_{\ell}$ to denote the value of function $f$ at grid point $x_{\ell}=\ell h$. We also define the subspace
$\mathring{\C}_{\rm per}:=\left\{f: f\in \C_{\rm per},\,\sum_{\ell=1}^N f_\ell = 0\right\}.$
We can extend the above definitions to vector value functions. For example, we define $\C_{\rm per}^n$ by
\begin{align*}
	\C_{\rm per}^n:=\{f=(f_1,\ldots,f_n) : f_i \in \C_{\rm per},~~i=1,\ldots,n\}.
\end{align*}
The spaces $\E_{\rm per}^n$, $\mathring{\C}_{\rm per}^n$ are defined the same way.
The discrete gradients $D_h$ and $d_h$ are defined in \eqref{eq:Dhdh}. 
We define the average of the function values of nearby points by
\begin{align}\label{eq:Axax}
\hat{f}_{\ell+\frac{1}{2}} = \frac{f_\ell + f_{\ell+1}}{2}, \text{ if } f \in \mathcal{C}_{\rm per}, \quad\text{and}\quad \hat{f}_{\ell} = \frac{f_{\ell+\frac{1}{2}}+f_{\ell-\frac{1}{2}}}{2}, \text{ if } f \in \mathcal{E}_{\rm per}.
\end{align}
The inner products 
are defined by
    $\langle f,g \rangle := h \sum_{\ell=1}^N f_\ell g_\ell, ~~\forall f,g \in \C_{\rm per}, \text{ and }[f,g]:=h\sum_{\ell=1}^N f_{\ell+\frac12}g_{\ell+\frac12},~~    
\forall f,g \in \E_{\rm per}.$
They can be also extended on $\C_{\rm per}^n$ and $\E_{\rm per}^n$ with
\begin{align*}
	\langle f,g \rangle:=h\sum_{i=1}^n\sum_{\ell=1}^Nf_{i,\ell}g_{i,\ell},~~\forall f,g\in \C_{\rm per}^n,\quad [f,g]:=h\sum_{i=1}^n\sum_{\ell=1}^N f_{i,\ell+\frac12} g_{i,\ell+\frac12}.
\end{align*}
We also take the following notation:
\begin{align*}
	\langle f \rangle :=h\sum_{\ell=1}^N f_\ell,~~f\in \C_{\rm per},\quad [f]:=h\sum_{\ell=1}^N f_{\ell+\frac12}, ~~ f\in \E_{\rm per}.
\end{align*}
Suppose $f\in \C_{\rm per}$ and $\phi \in \E_{\rm per}$, the following summation-by-parts formula holds:
\begin{align}
  \langle f,d_h \phi \rangle = -[D_h f,\phi].
\end{align} 
Next, we introduce a norm on $\mathring{\C}_{\rm per}^{n-1}$.
	 Let $\Phi$ be a $(n-1)\times (n-1)$ symmetric, positive definite matrix, with $\Phi_{ij}\in \E_{\rm per}$, $i,j=1,\ldots,n-1$.
	 We introduce the operator $\mathcal{L}_{\Phi}$ on $\mathring{\C}_{\rm per}^{n-1}$ defined by
	\begin{align}
		\mathcal{L}_{\Phi}  f := -d_h(\Phi D_h f) = \left(- \sum_{j=1}^{n-1} d_h(\Phi_{ij} D_hf_{j} )\right),~~\forall f \in \mathring{\C}_{\rm per}^{n}.
        \label{ellipticop}
	\end{align}
    For any $g\in \mathring{\C}_{\rm per}^{n-1}$, let $f$ be determined by $g = \mathcal{L}_{\Phi}f$, we define the norm
    \begin{align}\label{eq:linversenorm}
		\|g\|_{\mathcal{L}^{-1}_{\Phi}}^2: = [ D_h  f,  \Phi D_h f].
    \end{align}

\subsection{The scheme}
The scheme \eqref{eq:sc1}-\eqref{eq:sc3} is written in the component form as follows:
\begin{align}
	&\frac{\rho_{i,\ell}^{k+1}-\rho_{i,\ell}^k}{\Delta t} 
	= -\frac{1}{h}\left(\hat{\rho}_{i,\ell+\frac{1}{2}}^k v_{i,\ell+\frac{1}{2}}^{k+1} - \hat{\rho}_{i,\ell-\frac{1}{2}}^k v_{i,\ell-\frac{1}{2}}^{k+1} \right) , \label{eq:sd1} 
\\
	&-\sum_{j=1}^n b_{ij} \hat{\rho}_{j,\ell+\frac{1}{2}}^k (v_{i,\ell+\frac{1}{2}}^{k+1}-v_{j,\ell+\frac{1}{2}}^{k+1}) \label{eq:sd2}
\\
 &\qquad= \frac{\log \rho_{i,\ell+1}^{k+1} - \log \rho_{i,\ell}^{k+1}}{h}
 -  \frac{1}{h\sum_{j=1}^n \hat{\rho}_{j,\ell+\frac12}^k} 
	\sum_{j=1}^n \hat{\rho}^k_{j,\ell+\frac{1}{2}} (\log \rho_{j,\ell+1}^{k+1} - \log \rho_{j,\ell}^{k+1}),\nonumber
\\
	&\sum_{j=1}^n \hat{\rho}_{j,\ell+\frac12}^k v_{j,\ell+\frac12}^{k+1}=0,  \label{eq:sd3}
\end{align}
subject to initial data 
\begin{align}\label{ini}
\rho_{i, \ell}^0=\rho_{i0}(x_\ell), \quad i=1,\ldots, n, \quad \ell=1,\ldots, N.
\end{align}
 The scheme \eqref{eq:sd1}-\eqref{eq:sd3} is an implicit-explicit finite difference scheme. It can be obtained formally by discretizing the system \eqref{eq:f1}-\eqref{eq:consm}.

Next we study the conservation properties of the scheme. First we show that, at each grid point, the total mass is preserved.
\begin{lemma}\label{lm1}
	Suppose the solutions to the scheme \eqref{eq:sc1}-\eqref{eq:sc3} are positive for $k\ge 1$. Then the total mass at each grid point is conserved, i.e.
	\begin{align}\label{eq:lm1}
	 	\sum_{i=1}^n \rho_{i,\ell}^k = \sum_{i=1}^n \rho_{i,\ell}^0, \qquad \mbox{$\ell=1,\ldots,N$ and $k\ge1$}.
	 \end{align} 
\end{lemma}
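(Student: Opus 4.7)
The plan is to sum the discrete continuity equation \eqref{eq:sd1} over the component index $i$ from $1$ to $n$, and observe that the constraint \eqref{eq:sd3} forces the total discrete flux at each edge point to vanish. More precisely, summing \eqref{eq:sd1} over $i$ gives
\begin{equation*}
  \frac{\sum_{i=1}^n \rho_{i,\ell}^{k+1} - \sum_{i=1}^n \rho_{i,\ell}^k}{\Delta t}
  = -\frac{1}{h}\Bigl( \sum_{i=1}^n \hat{\rho}_{i,\ell+\frac12}^k v_{i,\ell+\frac12}^{k+1} - \sum_{i=1}^n \hat{\rho}_{i,\ell-\frac12}^k v_{i,\ell-\frac12}^{k+1}\Bigr),
\end{equation*}
and by \eqref{eq:sd3} applied at the two edge points $\ell\pm\frac12$, both sums on the right-hand side vanish. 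This immediately gives $\sum_{i=1}^n \rho_{i,\ell}^{k+1} = \sum_{i=1}^n \rho_{i,\ell}^k$ for every $\ell$.

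I would then conclude by a straightforward induction on the time index $k$: the base case $k=0$ is trivial, and the inductive step is precisely the identity just established. Iterating, $\sum_{i=1}^n \rho_{i,\ell}^k = \sum_{i=1}^n \rho_{i,\ell}^0$ for all $k\ge 1$, which is \eqref{eq:lm1}.

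Technically there is no real obstacle; the key mechanism is purely structural -- the constraint \eqref{eq:sd3} at edge centers is exactly the discrete analogue of the zero total momentum condition \eqref{eq:consm} that in the continuous setting makes $\sum_i \rho_i$ conserved pointwise. The positivity hypothesis on $\rho_{i,\ell}^k$ for $k\ge 1$ is only used implicitly to ensure that $\log \rho_{i,\ell}^{k+1}$ in \eqref{eq:sd2} is well-defined, so that the scheme makes sense and a solution $v_i^{k+1}$ exists for \eqref{eq:sd3} to be applied; it plays no role in the algebraic cancellation. The only minor point to take care of is the periodicity built into the grids \eqref{eq:cedef}, which guarantees that the shifted indices $\ell\pm\frac12$ remain meaningful for all $\ell=1,\ldots,N$, so the argument goes through uniformly across the torus.
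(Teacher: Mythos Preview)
Your proof is correct and essentially identical to the paper's own argument: sum \eqref{eq:sd1} over $i$, use \eqref{eq:sd3} at the edge points $\ell\pm\tfrac12$ to kill the flux terms, and iterate in $k$. Your additional remarks on the role of positivity and periodicity are accurate but not needed for the algebraic cancellation itself.
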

\begin{proof}
	From equations \eqref{eq:sd1} and \eqref{eq:sd3}, we have for $\ell=1,\ldots,N$,
	\begin{align*}
		\sum_{i=1}^n \rho_{i,\ell}^{k+1} =& \sum_{i=1}^n \rho_{i,\ell}^k - \Delta t  \sum_{i=1}^n d_h(\hat{\rho}_i^k v_i^{k+1})_{\ell} \\
		=& \sum_{i=1}^n \rho_{i,\ell}^k - \frac{\Delta t}{h} \left(\sum_{i=1}^n \hat{\rho}_{i,\ell+\frac{1}{2}}^k v_{i,\ell+\frac{1}{2}}^{k+1}-\sum_{i=1}^n \hat{\rho}_{i,\ell-\frac{1}{2}}^k v_{i,\ell-\frac{1}{2}}^{k+1}\right) = \sum_{i=1}^n \rho_{i,\ell}^k.
	\end{align*}
	We take $k$ iteratively to get \eqref{eq:lm1}.
\end{proof}

Next, we show that for each component, the mass is conserved, i.e. the summation over grid points is conserved. The following lemma holds.
\begin{lemma}\label{lm2}
	Suppose the solutions to the scheme \eqref{eq:sc1}-\eqref{eq:sc3} are positive for any $k\ge 1$. Then the mass for each component is conserved, i.e.,
	\begin{align}\label{eq:lm2}
		\sum_{\ell=1}^N \rho_{i,\ell}^k = \sum_{\ell=1}^N \rho_{i,\ell}^0, \qquad \mbox{$i = 1,\ldots,n$, $k\ge 1$} \, .
	\end{align}
\end{lemma}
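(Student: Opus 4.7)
The plan is to sum the discrete continuity equation \eqref{eq:sd1} over the spatial index $\ell$ and exploit the periodicity of the grid to make the divergence term vanish. Fix a component $i$ and a time level $k$. Multiplying \eqref{eq:sd1} by $h$ and summing over $\ell = 1, \ldots, N$ gives
\begin{equation*}
\sum_{\ell=1}^N \rho_{i,\ell}^{k+1} - \sum_{\ell=1}^N \rho_{i,\ell}^k = -\Delta t \sum_{\ell=1}^N \bigl(\hat{\rho}_{i,\ell+\frac12}^k v_{i,\ell+\frac12}^{k+1} - \hat{\rho}_{i,\ell-\frac12}^k v_{i,\ell-\frac12}^{k+1}\bigr).
\end{equation*}
The right-hand side is a telescoping sum over the periodic grid $\mathcal{E}$, hence equals zero. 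Equivalently, by the summation-by-parts formula applied to the constant function $\mathbf{1} \in \C_{\rm per}$ and to $\hat{\rho}_i^k v_i^{k+1} \in \E_{\rm per}$, we have $\langle \mathbf{1}, d_h(\hat{\rho}_i^k v_i^{k+1})\rangle = -[D_h \mathbf{1}, \hat{\rho}_i^k v_i^{k+1}] = 0$ since $D_h \mathbf{1} = 0$.

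Therefore $\sum_{\ell=1}^N \rho_{i,\ell}^{k+1} = \sum_{\ell=1}^N \rho_{i,\ell}^k$ for every $i$ and every $k \geq 0$. Iterating from the initial data \eqref{ini} yields \eqref{eq:lm2}.

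There is no real obstacle here; the result follows immediately from the discrete conservation form of \eqref{eq:sd1} together with the periodicity of $\C$ and $\E$. Notice that, unlike Lemma \ref{lm1}, this argument does not need the constraint \eqref{eq:sd3} on the velocities: conservation of the mass of each individual component is already encoded in the flux structure of the scheme. The positivity assumption is only invoked implicitly to ensure that the iteration \eqref{eq:sd1}--\eqref{eq:sd3} is well defined at every step $k$.
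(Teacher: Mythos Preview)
Your proof is correct and follows exactly the same route as the paper: sum \eqref{eq:sd1} over $\ell$, observe that the flux differences telescope to zero by periodicity, and iterate in $k$. (There is a harmless missing factor $1/h$ on the right-hand side of your displayed identity, but since that side vanishes anyway the conclusion is unaffected.)
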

\begin{proof}
	From \eqref{eq:sd1}, we get
	\begin{align*}
		\sum_{\ell=1}^N \rho_{i,\ell}^{k+1} =& \sum_{\ell=1}^N \rho_{i,\ell}^{k} - \frac{\Delta t}{h} \sum_{\ell=1}^N \left(\hat{\rho}_{i,\ell+\frac{1}{2}}^k v_{i,\ell+\frac{1}{2}}^{k+1} - \hat{\rho}_{i,\ell-\frac{1}{2}}^k v_{i,\ell-\frac{1}{2}}^{k+1} \right) 
		=&\sum_{\ell=1}^N \rho_{i,\ell}^{k} .
	\end{align*}
	Iterating in $k$ we obtain \eqref{eq:lm2}.
\end{proof}

\subsection{The scheme in $n-1$ components}
We consider first the solvability of the algebraic system \eqref{eq:f2}-\eqref{eq:consm} under the hypothesis $b_{ij} > 0$.
Since summing the equations \eqref{eq:f2} in $i=1,\ldots,n$ equals zero, these $n$ equations are not independent.  
One easily checks that for $\rho_i > 0$ the homogeneous system 
$$
- \sum_{j=1}^n b_{ij} \rho_j (v_i-v_j) = 0
$$
has only the trivial solution $v_1 = \cdots =v_n$. Hence the null space has dimension one. The solution of \eqref{eq:f2}-\eqref{eq:consm} is given by the following lemma.

\begin{lemma}
	Let $\rho_i(x,t)>0$, $x\in\mathbb{T}^d,t>0$, $i=1,\ldots,n$,
	 and suppose that $b_{ij} > 0$  and $b_{ij}=b_{ji}$, for $i\neq j$ and $ i, j = 1, ... , n$. 
	Then the algebraic system \eqref{eq:f2},  \eqref{eq:consm} has a unique solution that is explicitly expressed by
	\begin{align*}
		&\rho_i v_i = -\sum_{j=1}^{n-1} D_{ij} \nabla (\log \rho_j - \log \rho_n),~ i=1,\ldots,n-1, ~~\text{and}~~\rho_n v_n =  -\sum_{i=1}^{n-1} \rho_i v_i,
	\end{align*}
	where
	\begin{align}
		D_{ij} = D_{ij}(\rho)= \sum_{s,m=1}^{n-1} Q^{-T}_{is} B^{-1}_{sm}Q^{-1}_{mj}, ~~ i,j = 1,\ldots,n-1,\label{eq:Dmatrix} \, 
	\end{align}
	and
	\begin{align}
	B_{ij} = B_{ij}(\rho) =& \delta_{ij}\sum_{m=1}^n b_{im} \rho_i\rho_m - b_{ij} \rho_i\rho_j,\label{eq:taumatrix} \\
		Q_{ij} = Q_{ij}(\rho) &= 
		\frac{1}{\rho_i}\delta_{ij} + \frac{1}{\rho_n}
		\label{eq:Qmatrix}
		\\
		(Q^{-1})_{ij} =  Q^{-1}_{ij} (\rho) &=  \delta_{ij} \rho_i - \frac{\rho_i \rho_j}{\sum_{j=1}^n \rho_j} .
		\label{eq:Qinvmatrix}
	\end{align}
\end{lemma}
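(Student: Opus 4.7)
The plan is to reduce the rank-deficient algebraic system \eqref{eq:f2}--\eqref{eq:consm} to an invertible $(n-1)\times(n-1)$ linear problem, by eliminating $v_n$ via the constraint. Concretely, I would work with the variables $u_i := v_i - v_n$ for $i=1,\ldots,n-1$ (setting $u_n := 0$), substitute $v_j = u_j + v_n$ into \eqref{eq:f2}, and use $v_i - v_j = u_i - u_j$ to rewrite the LHS of the $i$-th equation for $i \leq n-1$ as $-u_i\sum_k b_{ik}\rho_k + \sum_{j=1}^{n-1} b_{ij}\rho_j u_j$. In matrix form this becomes $Au = -g$, with $g_i = \nabla\log(\rho_i/\bar\rho)$, $\bar\rho := \sum_k\rho_k$, and $A_{ij} = \delta_{ij}\sum_k b_{ik}\rho_k - b_{ij}\rho_j$. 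A direct inspection shows that the matrix $B$ of \eqref{eq:taumatrix} satisfies $B_{ij} = \rho_i A_{ij}$, so $A^{-1} = B^{-1}\mathrm{diag}(\rho_i)$.

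The crucial step is the invertibility of $B$, which I would obtain from the symmetrization identity
\[
\sum_{i,j=1}^{n-1} u_i B_{ij} u_j \;=\; \tfrac{1}{2}\sum_{i,j=1}^{n} b_{ij}\rho_i\rho_j(u_i-u_j)^2,
\]
valid after extending $u$ by $u_n=0$ and using $b_{ij}=b_{ji}$. Since $b_{ij},\rho_i > 0$, the right-hand side vanishes only when all $u_i$ coincide, which combined with $u_n=0$ forces $u=0$; hence $B$ is symmetric positive definite and $A$ is invertible. To return to the stated form I would switch to the fluxes $J_i := \rho_i v_i$: the identity $v_i - v_n = J_i/\rho_i - J_n/\rho_n$ combined with $J_n=-\sum_{j=1}^{n-1}J_j$ yields $u_i = \sum_{j=1}^{n-1} Q_{ij} J_j$, identifying $Q$ of \eqref{eq:Qmatrix}, and a direct product check $QQ^{-1} = I$ establishes \eqref{eq:Qinvmatrix}. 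Note that $Q$ and $Q^{-1}$ are symmetric, so $Q^{-T}=Q^{-1}$.

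Finally, a short computation based on $\sum_{j=1}^{n-1}\nabla\rho_j = \nabla\bar\rho - \nabla\rho_n$ and $\sum_{j=1}^{n-1}\rho_j = \bar\rho-\rho_n$ produces the data-transformation identity
\[
\rho_i\,\nabla\log(\rho_i/\bar\rho) \;=\; \sum_{j=1}^{n-1} (Q^{-T})_{ij}\,\nabla(\log\rho_j - \log\rho_n),\qquad i=1,\ldots,n-1.
\]
Assembling all pieces, $J = Q^{-1} u = -Q^{-1}A^{-1}g = -Q^{-1}B^{-1}Q^{-T}\nabla(\log\rho - \log\rho_n)$; symmetry of $Q$ lets me identify this matrix with $D$ of \eqref{eq:Dmatrix}, yielding $\rho_i v_i = -\sum_j D_{ij}\nabla(\log\rho_j - \log\rho_n)$ for $i\leq n-1$, while the formula for $\rho_n v_n$ is just the constraint \eqref{eq:consm}. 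The main obstacle is the positive-definiteness of $B$ via the above quadratic-form identity, which isolates the unique source of nonuniqueness and certifies that the reduction is well-defined; the remaining steps are algebraic bookkeeping organized by the symmetric structure of $Q$.
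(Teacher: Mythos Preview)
Your proposal is correct and follows essentially the same route as the paper's own argument (which is given in full detail for the discrete analogue, Lemma~\ref{lm4}): multiply by $\rho_i$, eliminate the $n$-th component via the constraint \eqref{eq:consm}, recognize the matrices $B$ and $Q$, and invert. The one minor variation is that you certify invertibility of $B$ through the quadratic-form identity $\sum_{i,j} u_iB_{ij}u_j=\tfrac12\sum_{i,j}b_{ij}\rho_i\rho_j(u_i-u_j)^2$, whereas the paper simply notes that $B$ is strictly diagonally dominant; your argument is in fact stronger (it yields positive definiteness, not just invertibility) and coincides with what the paper establishes later in Lemma~\ref{lem:st6}.
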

For $\rho>0$, $B$ is diagonally dominant and thus invertible. We note that  $Q^T = Q$ and that by a direct computation $Q Q^{-1} = Q^{-1} Q = \mathbb{I}$, where  $Q^{-1}$ is determined by \eqref{eq:Qinvmatrix}; 
 hence, $Q$ is also invertible.
 The proof can be found in \cite{huo2019high} or \cite{yang2015rigorous}.
A similar formula is established for the numerical scheme \eqref{eq:sc1}-\eqref{eq:sc3}:
\begin{lemma}\label{lm4}
	Assume $b_{ij}>0$ and $b_{ij}=b_{ji}$ for $i\neq j$ and $i,j=1,\ldots,n$. Suppose ${\rho}^k_{i,\ell}>0$ for $i=1,\ldots,n$, $\ell=1,\ldots,N$. The solutions of \eqref{eq:sc2}-\eqref{eq:sc3} are calculated 
	by the explicit formula
	\begin{align}
		&\hat{\rho}_i^k v_i^{k+1} = - \sum_{j=1}^{n-1} \hat{D}_{ij}^k
		D_h(\log \rho_j^{k+1} - \log \rho_n^{k+1}), 
		~i=1,\ldots,n-1,
		\label{eq:lm4}%
	\end{align}
	and $\hat{\rho}_n^k v_n^{k+1} = -\sum_{i=1}^{n-1} \hat{\rho}_i^k v_i^{k+1}$.
	Here
	\begin{align}\label{eq:Dhat}
		\hat{D}_{ij}^k = \sum_{s,m=1}^{n-1}(\hat{Q}^k)^{-T}_{is} (\hat{B}^k)^{-1}_{sm} (\hat{Q}^k)^{-1}_{mj} \, ,
	\end{align}
	and $\hat{Q}_{ij}^k=Q_{ij} (\hat{\rho}^k)$, $\hat{B}_{ij}^k=B_{ij}(\hat{\rho}^k)$,  $(\hat{Q}^k)^{-1}_{ij} =Q^{-1}_{ij}(\hat{\rho}^k)$  are the corresponding matrices \eqref{eq:taumatrix}-\eqref{eq:Qinvmatrix}\textbf{} with $\rho_i$ replaced by $\hat{\rho}_i^k$.
\end{lemma}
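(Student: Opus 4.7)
The plan is to exploit that the algebraic system \eqref{eq:sc2}--\eqref{eq:sc3} decouples spatially: at each edge point $x_{\ell+1/2}$ it amounts to a linear system in the $n$ unknowns $\{v_{i,\ell+1/2}^{k+1}\}_{i=1}^n$, whose coefficients involve only the known averaged densities $\hat{\rho}_{i,\ell+1/2}^k$ and whose right-hand side involves only the edge-centered values $\{(D_h \log \rho_i^{k+1})_{\ell+1/2}\}_{i=1}^n$. Hence the lemma reduces to a purely pointwise algebraic statement, which I would treat by a direct appeal to the continuous algebraic lemma proved just above (whose proof, as indicated in the excerpt, is entirely algebraic).

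More precisely, fix $\ell$ and introduce the shorthand $\tilde{\rho}_i := \hat{\rho}_{i,\ell+1/2}^k$, $w_i := v_{i,\ell+1/2}^{k+1}$, and $g_i := (D_h \log \rho_i^{k+1})_{\ell+1/2}$. With this notation the pointwise form of \eqref{eq:sc2}--\eqref{eq:sc3} reads
\begin{align*}
-\sum_{j=1}^n b_{ij} \tilde{\rho}_j (w_i - w_j) &= g_i - \frac{1}{\sum_{j=1}^n \tilde{\rho}_j}\sum_{j=1}^n \tilde{\rho}_j g_j, \\
\sum_{j=1}^n \tilde{\rho}_j w_j &= 0,
\end{align*}
which has precisely the structure of the continuous algebraic system \eqref{eq:f2}--\eqref{eq:consm}, with $\rho_i$ replaced by $\tilde{\rho}_i$ and $\nabla \log \rho_i$ replaced by $g_i$. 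The positivity hypothesis $\rho_{i,\ell}^k > 0$ forces $\tilde{\rho}_i > 0$, so the continuous lemma applies and yields
$$\tilde{\rho}_i w_i = - \sum_{j=1}^{n-1} D_{ij}(\tilde{\rho})(g_j - g_n), \quad i = 1,\ldots,n-1,$$
together with $\tilde{\rho}_n w_n = -\sum_{i=1}^{n-1} \tilde{\rho}_i w_i$. Restoring the original notation and recalling the convention of \eqref{eq:Dhat} then produces formula \eqref{eq:lm4} together with the identity $\hat{\rho}_n^k v_n^{k+1} = -\sum_{i=1}^{n-1} \hat{\rho}_i^k v_i^{k+1}$ at every edge point $\ell+1/2$.

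I expect no serious obstacle here: the continuous lemma is established by purely algebraic manipulations (inversion of the matrices $B$ and $Q$ built from the $\rho_i$), with no differentiation involved. Consequently the transcription $\rho_i \mapsto \tilde{\rho}_i$, $\nabla \log \rho_i \mapsto g_i$ preserves validity at each edge point, and invertibility of $B(\tilde{\rho})$ (strictly diagonally dominant since $\tilde{\rho}_i > 0$ and $b_{ij} > 0$) and of $Q(\tilde{\rho})$ (whose explicit inverse is \eqref{eq:Qinvmatrix}) is automatic. A self-contained derivation would simply repeat the continuous argument at each fixed $\ell$: use the constraint to eliminate $w_n$, reduce to an $(n-1)\times(n-1)$ linear system whose coefficient matrix factors as $Q(\tilde{\rho})^T B(\tilde{\rho}) Q(\tilde{\rho})$ acting on $(\tilde{\rho}_1 w_1, \ldots, \tilde{\rho}_{n-1} w_{n-1})^T$, and invert to recover the formula \eqref{eq:Dhat} for $\hat{D}_{ij}^k$.
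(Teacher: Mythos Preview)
Your proposal is correct and follows the same underlying idea as the paper: recognize that \eqref{eq:sc2}--\eqref{eq:sc3} decouples into independent linear systems at each edge point $\ell+\tfrac12$, and solve each one by the same algebra that handles the continuous system \eqref{eq:f2}--\eqref{eq:consm}. The only difference is packaging: the paper repeats the algebraic derivation explicitly (multiplying by $\hat{\rho}_i^k$, rewriting via $\hat{B}^k$, eliminating $v_n^{k+1}$ through \eqref{eq:sc3}, and identifying the $\hat{Q}^k$ factor), whereas you invoke the continuous lemma directly after the substitution $\rho_i\mapsto\hat{\rho}_{i,\ell+1/2}^k$, $\nabla\log\rho_i\mapsto (D_h\log\rho_i^{k+1})_{\ell+1/2}$. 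Your shortcut is legitimate because the continuous lemma is purely pointwise-algebraic, and your observation that $\rho_{i,\ell}^k>0$ forces $\hat{\rho}_{i,\ell+1/2}^k>0$ secures the invertibility of $\hat{B}^k$ and $\hat{Q}^k$ needed there.
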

Notice that formulas \eqref{eq:lm4}
 hold at each grid point $\ell+1/2=3/2,\ldots,N/2+1(\text{or } 1/2)$; to simplify the notation, we do not write the subscript $\ell+1/2$.
\begin{proof}
	Multiplying \eqref{eq:sc2} by $\hat{\rho}_i^k$ gives
\begin{align*}
	\hat{\rho}_i^k D_h\log\rho_i^{k+1} - \frac{\hat{\rho}_i^k}{\sum_{j=1}^n \hat{\rho}_j^k} \sum_{j=1}^n \hat{\rho}_j^k D_h \log \rho_j^{k+1} = -\sum_{j=1}^n b_{ij} \hat{\rho}_i^k \hat{\rho}_j^k (v_i^{k+1}-v_j^{k+1}) \, , 
\end{align*}
which is rewritten as
\begin{align}\label{eq:tm1}
	\sum_{j=1}^n\left(\delta_{ij} \hat{\rho}_i^k - \frac{\hat{\rho}_i^k \hat{\rho}_j^k}{\sum_{j=1}^n \hat{\rho}_j^k}\right)D_h\log\rho_j^{k+1} = -\sum_{j=1}^n \left(\delta_{ij} \sum_{m=1}^n b_{im}\hat\rho_i^k\hat\rho_m^k - b_{ij} \hat\rho_i^k \hat\rho_j^k  \right)v_j^{k+1}.
\end{align}
Setting
	$\hat{B}^k_{ij} = B_{ij}(\hat{\rho}^k)=  \delta_{ij} \sum_{m=1}^n b_{im}\hat\rho_i^k\hat\rho_m^k - b_{ij} \hat\rho_i^k \hat\rho_j^k,$
the right side of \eqref{eq:tm1} is expressed as
\begin{align}\label{eq:tm11}
	-\sum_{j=1}^n \hat{B}^k_{ij} v_j^{k+1} = -\sum_{j=1}^{n-1} \hat{B}_{ij}^k v_j^{k+1} - \hat{B}_{in}^kv_n^{k+1} = -\sum_{j=1}^{n-1} \hat{B}_{ij}^k(v_j^{k+1} -v_n^{k+1} ).
\end{align}
Using \eqref{eq:sc3}
we get
\begin{align}
	 -\sum_{j=1}^{n-1} &\hat B^k_{ij}(v_j^{k+1} -v_n^{k+1} )=-\sum_{j=1}^{n-1}\hat B^k_{ij}(v_j^{k+1} + \frac{1}{\hat\rho_n^k}\sum_{m=1}^{n-1}\hat\rho_m^k v_m^{k+1})\nonumber\\
	 & = -\sum_{j=1}^{n-1}\hat B^k_{ij}  \sum_{m=1}^{n-1}  (\frac{1}{\hat\rho_m^k}\delta_{jm} + \frac{1}{\hat\rho^k_n}) \hat\rho_m^k v_m^{k+1} = -\sum_{j,m=1}^{n-1} \hat B^k_{ij} \hat Q_{jm}^k \hat\rho_m^k v_m^{k+1} \, ,   
	 \label{eq:asd}
\end{align}
where 
$\hat Q_{jm}^k=Q_{jm}(\hat{\rho}^k)=\frac{1}{\hat\rho_m^k}\delta_{jm} + \frac{1}{\hat\rho^k_n}.$
 By direct calculation it is shown that   $\hat Q_{jm}^k$ is invertible with inverse
    $(\hat Q^k)^{-1}_{ij} = \left(\delta_{ij} \hat\rho_i^k - \frac{\hat\rho_i^k \hat\rho_j^k}{\sum_{j=1}^n \hat\rho_j^k}\right).$
The left side of \eqref{eq:tm1} is rewritten for $i \ne n$ as
\begin{align*}
	&\sum_{j=1}^n\left(\delta_{ij} \hat\rho_i^k - \frac{\hat\rho_i^k \hat\rho_j^k}{\sum_{j=1}^n \hat\rho_j^k}\right)D_h \log \rho_j^{k+1} \\
	&\quad = \sum_{j=1}^{n-1} (\hat Q^k)^{-1}_{ij} D_h \log \rho_j^{k+1} -\frac{\hat\rho_i^k(\sum_{j=1}^n\hat\rho_j^k -\sum_{j=1}^{n-1}\hat\rho_j^k )}{\sum_{j=1}^n \hat\rho_j^k}D_h \log \rho_n^{k+1} \\
	&\quad = \sum_{j=1}^{n-1} (\hat Q^k)^{-1}_{ij} D_h(\log \rho_j^{k+1} - \log \rho_n^{k+1}).
\end{align*}
This leads to expressing \eqref{eq:tm1} as
$$
\sum_{j=1}^{n-1} (\hat Q^k)^{-1}_{ij} D_h(\log \rho_j^{k+1} - \log \rho_n^{k+1}) =  -\sum_{j,m=1}^{n-1} \hat B^k_{ij} \hat Q_{jm}^k \hat\rho_m^k v_m^{k+1} \, .
$$
Since $\hat B^k$ and $\hat Q^k = (\hat {Q^k})^T$ are invertible, we conclude that \eqref{eq:lm4} holds.
\end{proof}

We adopt the notation 
\begin{align}\label{eq:tildenotation}
	\tilde{f} = (f_1,\ldots,f_{n-1}) \text{ for } f=(f_1,\ldots,f_n).
\end{align}
With Lemma \ref{lm4}, the scheme \eqref{eq:sc1}-\eqref{eq:sc3} can be written as
\begin{align*}
	\frac{\tilde{\rho}^{k+1}-\tilde{\rho}^k}{\Delta t} = -d_h\left(\hat{D}^k D_h \left(\frac{1}{h}\frac{\partial F_h}{\partial \tilde\rho}(\tilde\rho^{k+1})\right)\right),
\end{align*}
where
\begin{align}\label{lem:7}
    F_h= F_h(\tilde{\rho}):=\left\langle \sum_{i=1}^{n-1} \rho_i\log\rho_i\right\rangle 
     + \left\langle\left(1 - \sum_{i=1}^{n-1}\rho_i\right)\log \left(1 - \sum_{i=1}^{n-1}\rho_i\right) \right\rangle.
\end{align}


\subsection{Consistency}
Let $(P,V)$ be the exact smooth solution of the equations \eqref{eq:f1}-\eqref{eq:f2} in the space
    $P, V \in C^3_{t,x}([0,T]\times\mathbb{T}).$
The values at grid points are $P_{i,\ell}^k:=P_i(x_\ell,k\Delta t), V_{i,\ell}^k :=V_i(x_\ell,k\Delta t)$. The local truncation errors are defined by
\begin{align*}
    &\tau_{i}^1 = \frac{P_i^{k+1}-P_i^k}{\Delta t} +d_h(\hat{P}_i^k V_i^{k+1}), \\
	&\tau_i^2 = D_h \log P_i^{k+1} - \frac{1}{\sum_{j=1}^n \hat{P}_j^k}\sum_{i=1}^n \hat{P}_i^k D_h \log P_i^{k+1} +\sum_{j=1}^n b_{ij}  \hat{P}_j^k (V_i^{k+1}-V_j^{k+1}),\\
	&\tau_i^3=\sum_{i=1}^n \hat{P}_i^k V_i^{k+1}.
\end{align*}
We have the following lemma.
\begin{lemma}\label{lema}
 Suppose the solutions $(P,V)$ to the system \eqref{eq:f1}-\eqref{eq:consm} are smooth in time and space, with $P,V \in C_{t,x}^3$ and $P_i(x,t)>0$ for $x\in\mathbb{T}$ and $t>0$ and for any $i=1,\ldots,n$. Suppose $(P,V)$ satisfies the condition \eqref{hypdata}.
     Then the local truncation errors satisfy
        \begin{align*}
            |\tau_{i,\ell}^1|,~|\tau_{i,\ell+\frac12}^2|,~|\tau_{i,\ell+\frac12}^3| \le C(\Delta t + h^2).
        \end{align*}
        Here $C>0$ is a positive constant depending on $(P,V)$.
 \end{lemma}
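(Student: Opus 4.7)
The plan is to rely on Taylor expansion of the smooth solution $(P,V)$ around appropriately chosen grid points and times, and to use the fact that $(P,V)$ satisfies the system \eqref{eq:f1}--\eqref{eq:consm} pointwise so that the leading-order contribution of each truncation error cancels. For each of the three errors I expand $P_{i,\ell}^k$, $\hat{P}_{i,\ell+1/2}^k$, $V_{i,\ell+1/2}^{k+1}$ etc.\ around a common reference point---$(x_\ell,t_k)$ for $\tau^1$ and $(x_{\ell+1/2},t_{k+1})$ for $\tau^2$ and $\tau^3$---and show that the leading term reproduces the corresponding PDE, which vanishes identically. The $C^3_{t,x}$ regularity is what lets me bound the remainders uniformly in terms of the $C^3$ norms of $(P,V)$.

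For $\tau^1$ at $(x_\ell,t_k)$, the forward difference gives $\frac{P^{k+1}-P^k}{\Delta t}=\partial_t P(x_\ell,t_k)+O(\Delta t)$. The midpoint average satisfies $\hat{P}^k_{\ell+1/2}=P(x_{\ell+1/2},t_k)+O(h^2)$ by a Taylor expansion symmetric about $x_{\ell+1/2}$, while $V^{k+1}_{\ell+1/2}=V(x_{\ell+1/2},t_k)+O(\Delta t)$, so the edge product equals $(PV)(x_{\ell+1/2},t_k)+O(h^2+\Delta t)$. Applying the centered difference $d_h$ at $x_\ell$ then yields $\partial_x(PV)(x_\ell,t_k)+O(h^2+\Delta t)$, and the continuity equation $\partial_t P_i+\partial_x(P_iV_i)=0$ gives $|\tau^1_{i,\ell}|\le C(\Delta t+h^2)$.

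The analysis of $\tau^2$ follows the same pattern at the edge point $(x_{\ell+1/2},t_{k+1})$. The centered quotient satisfies $D_h\log P^{k+1}|_{\ell+1/2}=\partial_x\log P(x_{\ell+1/2},t_{k+1})+O(h^2)$, the midpoint averages $\hat{P}^k_{j,\ell+1/2}$ agree with $P_j(x_{\ell+1/2},t_{k+1})$ up to $O(h^2+\Delta t)$, and the friction sum is expanded analogously. The one subtlety is the factor $1/\sum_j \hat{P}^k_{j,\ell+1/2}$: by hypothesis \eqref{hypdata} and the conservation identity \eqref{eq:consrho}, $\sum_j P_j\equiv 1$, so the denominator is bounded away from zero uniformly and the quotient is smooth in its argument. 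Invoking \eqref{eq:f2} at $(x_{\ell+1/2},t_{k+1})$ then cancels the zeroth-order part and delivers the stated estimate. The constraint $\tau^3$ is handled identically using $\sum_j P_jV_j\equiv 0$.

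The main obstacle is bookkeeping rather than any subtle cancellation: distinct terms are naturally evaluated at cell centers versus edges and at time $t_k$ versus $t_{k+1}$, and one must consistently track which time-shift or averaging contributes an $O(\Delta t)$ versus an $O(h^2)$ remainder. Once the PDE is used to eliminate the leading order, the constant $C$ can be taken to depend only on $\|P\|_{C^3}$, $\|V\|_{C^3}$, the coefficients $b_{ij}$, and the uniform positive lower bound on $\sum_j P_j$.
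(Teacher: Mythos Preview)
Your proposal is correct and follows essentially the same approach as the paper's proof: Taylor-expand each discrete quantity, identify the leading-order contribution with the corresponding continuous equation \eqref{eq:f1}--\eqref{eq:consm}, and bound the remainder by $C(\Delta t+h^2)$ using the $C^3_{t,x}$ regularity. The only minor tactical difference is that for $\tau^2$ and $\tau^3$ you expand about the edge midpoint $(x_{\ell+1/2},t_{k+1})$, which makes the centered operators $D_h$ and $\hat{\cdot}$ symmetric and yields $O(h^2)$ directly, whereas the paper expands about $(x_\ell,t_k)$ and must separately observe that the resulting $O(h)$ term is $\tfrac{h}{2}\partial_x$ of the PDE and hence vanishes.
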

 An elementary verification is deferred to Appendix A. 

\section{Optimization formulation}\label{sec:op}
\subsection{Formulation via an optimization problemma}
In this section, we give an optimization formulation of the scheme \eqref{eq:sc1}-\eqref{eq:sc3}. We recall that the system \eqref{eq:f1}-\eqref{eq:consm} can be written as the gradient flow of the energy functional \eqref{eq:fde}, see \cite{huo2019high}. Consider the minimization problem
\begin{align*}
	\rho^{k+1} = \argmin_{\rho\geq 0, w}\left\{\frac{1}{\Delta t} \int_{\mathbb{T}^d} \sum_{i,j=1}^n \frac{1}{4} b_{ij} \rho_i^k\rho_j^k(w_i-w_j)^2 dx + F(\rho)\right\},
\end{align*}
with $F(\rho)$ defined in \eqref{eq:fde},
subject to the constraints
\begin{align*}
	\rho_i - \rho_i^k + \nabla \cdot (\rho_i^k w_i) = 0, ~~ i =1,\ldots,n,~~\text{and}~~
	\sum_{i=1}^n \rho_i^k  w_i =0 .
\end{align*}
The idea is to calculate minimizers of the free energy penalized by the work  consumed by friction. The variational scheme is related
to the Jordan-Kinderlehrer-Otto scheme \cite{jko98}, an analogy due to the connection between frictional dissipation and the Wasserstein distance offered by
the Benamou-Brenier interpretation \cite{benamou2000computational} of the Monge-Kantorovich mass transfer problem. There is however one important difference, as
the frictional dissipation is more elaborate in the multi-component mixture situation.

The minimizers of the above constraint problem can be calculated by considering the min-max augmented Lagrangian
\begin{align*}
	\min_{\rho,w}\max_{\alpha,\beta} L(\rho, w, \alpha, \beta)  =& \frac{1}{\Delta t} \int_{\mathbb{T}^d} \sum_{i,j=1}^n \frac{1}{4} b_{ij} \rho_i^k\rho_j^k(w_i-w_j)^2  +  \sum_{j=1}^n \rho_j \log \rho_j \, dx
\\
	 &+ \int_{\mathbb{T}^d} \alpha \sum_{i=1}^n \rho_i^k w_i dx 
	+ \int_{\mathbb{T}^d} \sum_{i=1}^n \left( \beta_i (\rho_i - \rho_i^k) -  \nabla \beta_i  \cdot (\rho_i^k w_i)\right)dx,
\end{align*}
Computing the variational derivatives gives:
\begin{align*}
	\frac{\delta L}{\delta \rho_i} &= 0  \qquad \mbox{implies}   &&\log \rho_i + 1 + \beta_i = 0, \\
	\frac{\delta L}{\delta w_i} &= 0  \qquad \mbox{implies} &&\frac{1}{\Delta t}\sum_{j=1}^n b_{i j} \rho_i^k\rho_j^k(w_i-w_j) + \alpha \rho_i^k  - \rho_i^k \nabla \beta_i =0, \\ 
	\frac{\delta L}{\delta \alpha} &=  0  \qquad \mbox{implies}  &&\sum_{i=1}^n \rho_i^k w_i =0, \\
	\frac{\delta L}{\delta \beta_i} &=  0  \qquad \mbox{implies} &&\rho_i - \rho_i^k + \nabla \cdot (\rho_i^k  w_i) =0 .
\end{align*}
Taking 
$v_i = w_i/\Delta t$, 
we get
\begin{align*}
	\frac{\rho_i^{k+1}-\rho_i^k}{\Delta t} + \nabla \cdot (\rho_i^k v_i^{k+1}) &= 0, \\
	- \sum_{j=1}^n b_{i j} \rho_i^k\rho_j^k(v_i^{k+1}-v_j^{k+1}) &= \rho_i^k \nabla \log\rho_i^{k+1} - \frac{  \rho_i^k }{\sum_{j=1}^n \rho_j^k}\sum_{i=1}^n \rho_i^k \nabla \log \rho_i^{k+1},\\
	\sum_{i=1}^n \rho_i^k v_i^{k+1} &= 0.
\end{align*}
The latter corresponds to an implicit-explicit discretization in time of the system \eqref{eq:f1}-\eqref{eq:consm}.

Next we will give details of the  optimization formulation for the fully discretized scheme \eqref{eq:sc1}-\eqref{eq:sc3}.


We  prove the following theorem. 
\begin{theorem}\label{theorem}
	Assume $b_{ij}>0$ and  $b_{ij}=b_{ji}$ for $i\neq j$ and $i,j=1,\ldots,n$.
	Given
	$\rho^k \in \C_{\rm per}$ with $\rho^k > 0$. There exists $\delta_0>0$ such 
	that $\rho^{k+1}>0$ is a solution of the numerical scheme \eqref{eq:sc1}-\eqref{eq:sc3} if and only if it is a minimizer of the optimization problem:
	\begin{align}\label{eq:op}
	&\rho^{k+1} = \argmin_{(\rho,w) \in K_\delta} \left\{J= \frac{1}{4 \Delta t}  \left[\sum_{i,j=1}^n b_{ij}\hat\rho_i^k\hat\rho_j^k(w_i-w_j)^2 \right] + F_h(\rho)\right\},
\end{align}	
where
$ F_h(\rho)= \left\langle\sum_{i=1}^n \rho_i \log \rho_i \right\rangle,$
and
\begin{align*}
	K_\delta=\bigg\{ &(\rho,w):~ \rho \in \C_{\rm per}^n,~w \in \E_{\rm per}^n;~
	\rho_{i,\ell} \ge \delta,
	~~ 
	{\rho_{i,\ell}-\rho_{i,\ell}^k} + d_h(\hat\rho_i^k w_i)_{\ell}=0,\\
	&\sum_{i=1}^n \hat{\rho}_{i,\ell+\frac{1}{2}}^k w_{i,\ell+\frac12}=0 \text{ and } \sum_{i=1}^n \rho_{i,\ell} =1,~\forall i =1,\ldots,n,~ \forall \ell =1,\ldots,N\bigg\},
\end{align*}
for any $0<\delta\le\delta_0$.
\end{theorem}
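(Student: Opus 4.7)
The plan is to set up the Karush--Kuhn--Tucker (KKT) conditions for the constrained convex minimization \eqref{eq:op} and show that they coincide precisely with the scheme \eqref{eq:sc1}-\eqref{eq:sc3}. The cost $J$ is convex ($F_h$ is strictly convex in $\rho$ and the quadratic frictional term is convex in $w$), and all equality constraints are linear; hence the problem is a convex program, and once the inequality $\rho_i\ge\delta$ is shown to be inactive at the optimum, the KKT conditions will be both necessary and sufficient.

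First I would verify the basic infrastructure. $K_\delta$ is nonempty (it contains $(\rho^k,0)$ provided $\delta\le\min_{i,\ell}\rho^k_{i,\ell}$), closed, and bounded. The positivity and mass constraints $\delta\le\rho_i$ and $\sum_i\rho_i=1$ bound $\rho$, while the quadratic form $\sum_{i,j}b_{ij}\hat\rho_i^k\hat\rho_j^k(w_i-w_j)^2$ is coercive in $w$ on the hyperplane $\{\sum_i\hat\rho_i^k w_i=0\}$: its null directions are precisely $w_i\equiv c(x)$ across components, and the mean-flow constraint then forces $c(x)\sum_i\hat\rho_i^k=0$, i.e.\ $c\equiv0$. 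Continuity of $J$ gives existence, and combining this argument with strict convexity of $F_h$ in $\rho$ shows that $J$ is strictly convex on $K_\delta$, so the minimizer is unique.

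Next I would show that for $\delta_0$ small enough the minimizer satisfies $\rho^{k+1}>\delta$ strictly, so that the positivity constraint carries no active multiplier. The key is $\partial_\rho(\rho\log\rho)=\log\rho+1\to-\infty$ as $\rho\to0^+$: if some component value $\rho^{k+1}_{i_0,\ell_0}$ equalled $\delta$, then a feasible perturbation moving $\rho_{i_0,\ell_0}$ inward (available because $(\rho^k,0)$ is an interior reference point and the equality constraints have a nontrivial kernel) yields a first-order decrease of $F_h$ of order $|\log\delta|$ while the quadratic term changes only by $O(1)$, contradicting minimality. The threshold $\delta_0$ can be chosen smaller than the positive lower bound supplied by the explicit formula of Lemma \ref{lm4} on any solution of the scheme emanating from $\rho^k>0$.

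Finally I would introduce Lagrange multipliers $\beta_i\in\C_{\rm per}$ for the continuity constraint and $\alpha\in\E_{\rm per}$ for the mean-flow constraint, form the Lagrangian as in the formal calculation already laid out in the paper, and compute the stationarity conditions. Differentiating in $\rho_i$ gives $\log\rho^{k+1}_i+1+\beta_i=0$; differentiating in $w_i$, dividing by $\hat\rho_i^k>0$, and setting $v_i=w_i/\Delta t$ gives
\begin{equation*}
\sum_{j=1}^n b_{ij}\hat\rho_j^k(v_i^{k+1}-v_j^{k+1})+\alpha-D_h\beta_i=0.
\end{equation*}
Eliminating $\beta_i$, multiplying this equation by $\hat\rho_i^k$ and summing in $i$, and using the symmetry $b_{ij}=b_{ji}$ to kill the frictional contribution identifies $\alpha=-(\sum_i\hat\rho_i^k D_h\log\rho_i^{k+1})/(\sum_i\hat\rho_i^k)$; substituting back reproduces \eqref{eq:sc2}, while the constraints themselves furnish \eqref{eq:sc1} and \eqref{eq:sc3}. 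The converse is immediate: given a positive solution of the scheme, the explicit multipliers $\beta_i=-\log\rho_i^{k+1}-1$ and $\alpha$ above satisfy KKT, and convexity of $J$ on $K_\delta$ (with $\delta<\min_{i,\ell}\rho^{k+1}_{i,\ell}$) forces the point to be the minimizer. The main obstacle I expect is the interior step: producing an explicit discrete feasible perturbation respecting both the weighted continuity equation and the mean-flow constraint while moving one component inward, which reduces to verifying surjectivity of the discrete divergence $w\mapsto d_h(\hat\rho^k w)$ from the mean-flow hyperplane onto $\mathring{\C}_{\rm per}^n$.
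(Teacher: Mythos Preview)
Your overall architecture matches the paper's: convexity of the program, a boundary argument driven by the blow-up of $\log\rho$, and identification of the first-order optimality conditions with the scheme. Your KKT derivation in the last paragraph is correct and reproduces \eqref{eq:sc2} exactly as the paper's Step~3 does (the paper performs an equivalent computation after a change of variables).

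The substantive difference is in how the interior step is executed. The paper does \emph{not} work with the full $(\rho,w)$ variables and Lagrange multipliers; it first eliminates $w$ and the $n$-th component by rewriting the problem purely in $\tilde\rho=(\rho_1,\ldots,\rho_{n-1})$ via the identity
\[
\tfrac12\sum_{i,j}b_{ij}\hat\rho_i^k\hat\rho_j^k(w_i-w_j)^2=\tilde W^T(\hat D^k)^{-1}\tilde W,\qquad \tilde W=(\hat\rho_1^k w_1,\ldots,\hat\rho_{n-1}^k w_{n-1}),
\]
and then, using the continuity constraint $\rho_i-\rho_i^k+d_h W_i=0$, expresses the frictional cost as a dual norm $\|\tilde\rho-\tilde\rho^k\|_{\mathcal L^{-1}_{\hat D^k}}^2$. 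This buys two things. First, perturbations are now taken in $\mathring{\mathcal C}_{\rm per}^{n-1}$ alone, so the ``obstacle'' you flag (constructing a feasible $(\nu_\rho,\nu_w)$ respecting both the weighted divergence and the mean-flow constraint) disappears. Second, the directional derivative of the quadratic part at the minimizer becomes $\frac{1}{\Delta t}\langle \mathcal L^{-1}_{\hat D^k}(\tilde\rho^\star-\tilde\rho^k),\nu\rangle$, and Lemma~\ref{lm5} gives an explicit $L^\infty$ bound on $\mathcal L^{-1}_{\hat D^k}(\tilde\rho^\star-\tilde\rho^k)$ depending only on $h$, $n$, and the minimal eigenvalue of $\hat D^k$ (hence only on $\rho^k$). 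This is precisely the quantitative content behind your claim that ``the quadratic term changes only by $O(1)$''; without the reduction you would need to show that $w^\star$ is bounded independently of $\delta$, which is the same estimate in disguise.

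Two smaller points. Your sentence about choosing $\delta_0$ ``smaller than the positive lower bound supplied by the explicit formula of Lemma~\ref{lm4}'' is misplaced: Lemma~\ref{lm4} solves \eqref{eq:sc2}--\eqref{eq:sc3} for the fluxes in terms of $\rho^{k+1}$ but gives no a~priori lower bound on $\rho^{k+1}$; in the paper $\delta_0$ comes entirely from the boundary argument (the minima of \eqref{eq:delta}, \eqref{eq:delta2}, \eqref{eq:delta0}). Also, the paper must treat two kinds of boundary contact, $\rho_{i_0,\ell_0}^\star=\delta$ and $\sum_{i=1}^{n-1}\rho_{i,\ell_0}^\star=1-\delta$ (equivalently $\rho_{n,\ell_0}^\star=\delta$), with different perturbation directions in each case; your sketch treats only the first.
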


We first prove a lemma that will be used later in the proof.

	\begin{lemma}\label{lm5}
	Suppose $\Phi$ is a $(n-1)\times (n-1)$ symmetric  positive definite matrix, with $\Phi_{ij}\in \E_{\rm per}$ for $i,j=1,\ldots,n-1$.
	Suppose $\phi \in \mathring{\C}^{n-1}_{\rm per}$
	 is bounded in $L^\infty$ satisfying $\|\phi\|_{L^\infty}
	  \le M$, where $\|\cdot\|_{L^\infty}$ is defined by
	\[\|\phi\|_{L^\infty}:= \max_{\substack{i=1,\ldots,n-1\\\ell=1,\ldots,N}} |\phi_{i,\ell}|.\]
	Then the following estimate holds
	\begin{align*}
		\|\mathcal{L}_{\Phi}^{-1} \phi\|_{L^\infty} 
		\le \frac{CM}{\lambda_{\min}} h^{-\frac{1}{2}}(n-1)^{\frac{1}{2}},
	\end{align*}
	where $C>0$ is a constant independent of $h$,
	$\lambda_{min}$ the minimum  of all eigenvalues of $\Phi$:
	\[\lambda_{\min} = \min_{\ell=1,\ldots,N} \left\{\lambda_{\ell}: \lambda_{\ell} \text{ is  the eigenvalue of } (\Phi_{ij,\ell+\frac12})_{(n-1)\times(n-1)}\right\}.\]
\end{lemma}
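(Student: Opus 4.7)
\bigskip

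\noindent\textbf{Plan of proof for Lemma \ref{lm5}.}
The strategy is the standard energy estimate for a discrete elliptic operator, followed by a discrete Poincar\'e inequality and a discrete inverse inequality to pass from $L^2$ to $L^\infty$. Set $f=\mathcal{L}_\Phi^{-1}\phi\in\mathring{\C}^{n-1}_{\rm per}$, so that $-d_h(\Phi D_h f)=\phi$. Pairing this identity with $f$ in the $\C$-inner product and invoking the summation-by-parts formula $\langle f,d_h g\rangle=-[D_h f,g]$ componentwise gives
\begin{equation*}
  [D_h f,\Phi D_h f]=\langle \phi,f\rangle.
\end{equation*}
Positive definiteness of $\Phi$ at every edge, together with the definition of $\lambda_{\min}$, yields the coercivity bound $\lambda_{\min}[D_h f,D_h f]\le\langle\phi,f\rangle$.

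The next step is a discrete Poincar\'e inequality for zero-mean periodic grid functions: since each component $f_i\in\mathring{\C}_{\rm per}$ has $\langle f_i\rangle=0$, there is a constant $C_P=C_P(L)$ depending only on the period $L$ such that $\|f_i\|_{L^2}\le C_P\|D_h f_i\|_{L^2}$. Summing over $i=1,\dots,n-1$ and applying Cauchy--Schwarz to $\langle\phi,f\rangle$ gives
\begin{equation*}
  \lambda_{\min}\|D_h f\|_{L^2}^2\le \|\phi\|_{L^2}\|f\|_{L^2}\le C_P\|\phi\|_{L^2}\|D_h f\|_{L^2},
\end{equation*}
from which $\|f\|_{L^2}\le C_P^2\lambda_{\min}^{-1}\|\phi\|_{L^2}$.

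It remains to control the right-hand and left-hand sides in $L^\infty$. Directly from the definition,
\begin{equation*}
  \|\phi\|_{L^2}^2=h\sum_{i=1}^{n-1}\sum_{\ell=1}^N \phi_{i,\ell}^2\le h\cdot N(n-1)\|\phi\|_{L^\infty}^2=(n-1)L\,M^2,
\end{equation*}
and similarly the trivial inverse estimate $h\|f\|_{L^\infty}^2\le\|f\|_{L^2}^2$ gives $\|f\|_{L^\infty}\le h^{-1/2}\|f\|_{L^2}$. Chaining these three inequalities produces
\begin{equation*}
  \|\mathcal{L}_\Phi^{-1}\phi\|_{L^\infty}\le h^{-1/2}\frac{C_P^2}{\lambda_{\min}}M\sqrt{(n-1)L}=\frac{C\,M}{\lambda_{\min}}h^{-1/2}(n-1)^{1/2},
\end{equation*}
which is the desired estimate with $C$ independent of $h$.

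The only nontrivial ingredient is the discrete Poincar\'e inequality on $\mathring{\C}_{\rm per}$; this is classical on a periodic 1D grid and can either be cited from \cite{wise2009energy} (whose notation we follow) or proved from scratch by writing $f_\ell=\sum_{m=1}^{\ell-1}h(D_h f)_{m+1/2}+f_1$ and using the zero-mean condition to eliminate $f_1$, yielding a constant depending only on $L$. No other step is delicate: the coercivity and summation-by-parts are built into the setup, and the $L^2$-to-$L^\infty$ loss of $h^{-1/2}$ is sharp in one space dimension and accounts precisely for the $h^{-1/2}$ factor appearing on the right-hand side.
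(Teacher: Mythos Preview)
Your proof is correct and follows essentially the same route as the paper: energy estimate via summation by parts, coercivity through $\lambda_{\min}$, discrete Poincar\'e on $\mathring{\C}_{\rm per}$, and an $L^2\to L^\infty$ inverse inequality to pick up the $h^{-1/2}$. The only cosmetic difference is that the paper packages the last step as $\|f\|_{L^\infty}\le C_1 h^{-1/2}\|D_h f\|_{L^2}$ in one stroke, whereas you split it into the trivial bound $\|f\|_{L^\infty}\le h^{-1/2}\|f\|_{L^2}$ followed by Poincar\'e; either way the constants differ only by a harmless power of $C_P$.
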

\begin{proof}
	Since $\|\phi\|_{L^\infty}\le M$,
	\begin{align*}
	    \|\phi\|_{L^2}^2: =& h \sum_{\substack{i=1,\ldots,n-1\\\ell=1,\ldots,N}} | \phi_{i,\ell}|^2 = h \sum_{\substack{i=1,\ldots,n-1\\\ell=1,\ldots,N}} |M|^2 \le (n-1)hN|M|^2 = (n-1)L|M|^2.
	\end{align*}
	Set $ g = \phi \in \mathring{\C}_{\rm per}^{n-1}$, and $f=\mathcal{L}_{\Phi}^{-1} g$ in \eqref{eq:linversenorm}, we get
	\begin{align*}
		&\|\phi\|_{\mathcal{L}_{\Phi}^{-1}}^2 = [D_h {f}, \Phi D_h {f}].
	\end{align*}
	Since $\Phi$ is positive definite so its minimum eigenvalues $\lambda_{\min}>0$,
	we get
	\begin{align*}
		&\lambda_{\min} \|D_h {f}\|_{L^2}^2 \le [D_h {f}, \Phi D_h {f} ] = -\langle {f}, d_h(\Phi D_h {f}) \rangle = \langle {f}, \phi\rangle \le \|{f}\|_{L^2} \|\phi\|_{L^2}.
	\end{align*}
	The use of the discrete Poincar\'e inequality gives
		$\|{f}\|_{L^2} \le C_P\|D_h {f}\|_{L^2}.$
	Therefore, we get
	\begin{align*}
		\|D_h {f}\|_{L^2} \le \frac{C_P}{\lambda_{\min}} \|\phi\|_{L^2}.
	\end{align*}
	Using an inverse inequality leads to
	\begin{align*}
		\|{f}\|_{L^\infty} \le C_1 h^{-\frac{1}{2}} \|D_h {f}\|_{L^2} \le \frac{C_1C_P}{\lambda_{\min}} h^{-\frac{1}{2}} L^{\frac{1}{2}}M(n-1)^{\frac{1}{2}} \le \frac{CM}{\lambda_{\min}}h^{-\frac{1}{2}}(n-1)^\frac12.
	\end{align*}
	\end{proof}

\begin{proof}[Proof of Theorem \ref{theorem}]
	The proof is divided into three steps. In the first two steps, we prove that the optimization problem \eqref{eq:op} has a unique interior minimizer and, in the last step, we prove that this minimizer is equivalent to the solution of the numerical scheme \eqref{eq:sc1}-\eqref{eq:sc3}.

	\emph{Step 1. Existence of the optimization problem.} 
	First we show existence for the optimization problem \eqref{eq:op} for any $\delta>0$. Notice that the objective function $J$ in \eqref{eq:op} is convex in $w$ but it is not strictly convex. However, we can rewrite the optimization problem by using the first $n-1$ components of $w$ and get an equivalent convex optimization problem. We introduce
	\begin{align*}
		W=(W_1,\ldots,W_{n}), \quad W_i = \hat\rho_i^k w_i,~~i=1,\ldots,n,
	\end{align*}
	and so 
		$\sum_{i=1}^n W_i = 0.$
We adopt the notation \eqref{eq:tildenotation} and define $\tilde W=(W_1,\ldots,W_{n-1})$.
	We have the following lemma.
	\begin{lemma}\label{lem:st6}
		The following formula holds:
		\begin{align}\label{eq:wformula}
		I(\tilde W) := \frac12\sum_{i=1}^n b_{ij}\hat\rho_i^k\hat\rho_j^k(w_i-w_j)^2= \tilde W^T(\hat Q^k)^T \hat B^k \hat Q^k \tilde W = \tilde{W}^T (\hat{D}^k)^{-1}\tilde{W}. 
		\end{align}
		For $\hat{\rho}^k >0$, the  function $I : \mathbb{R}^{n-1} \to \mathbb{R^+}$ is strictly convex.
	\end{lemma}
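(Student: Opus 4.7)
The plan is to establish both equalities in \eqref{eq:wformula} by a direct linear-algebraic reduction, and to obtain strict convexity from positive definiteness. First, I would rewrite $I(\tilde W)$ as a full $n\times n$ quadratic form in $w=(w_1,\ldots,w_n)$: expanding the square and using the symmetry $b_{ij}=b_{ji}$ gives $I(\tilde W)=w^{T}\mathcal{B}w$, where $\mathcal{B}$ is the symmetric $n\times n$ matrix with entries $\mathcal{B}_{ij}=\delta_{ij}\sum_{m=1}^n b_{im}\hat\rho_i^k\hat\rho_m^k-b_{ij}\hat\rho_i^k\hat\rho_j^k$, whose leading $(n-1)\times(n-1)$ block coincides with $\hat B^k$ from \eqref{eq:taumatrix}.

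The crux is reducing this to an $(n-1)\times(n-1)$ form in $\tilde W$. Observe that $\mathcal{B}$ has zero row and column sums, $\sum_j\mathcal{B}_{ij}=\sum_i\mathcal{B}_{ij}=0$, so one may insert $(w_i-w_n)$ in place of $w_i$ in each slot without changing the value: $w^T\mathcal{B}w=\sum_{i,j=1}^n\mathcal{B}_{ij}(w_i-w_n)(w_j-w_n)$. The $i=n$ and $j=n$ contributions vanish identically, leaving $\sum_{i,j=1}^{n-1}\hat B^k_{ij}(w_i-w_n)(w_j-w_n)$. Next I would invoke the constraint $\sum_i\hat\rho_i^k w_i=0$ built into $K_\delta$, which together with $W_m=\hat\rho_m^k w_m$ gives exactly the identity $w_i-w_n=\sum_{m=1}^{n-1}\hat Q^k_{im}W_m$ already derived in \eqref{eq:asd}. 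Substituting yields $I(\tilde W)=(\hat Q^k\tilde W)^T\hat B^k(\hat Q^k\tilde W)=\tilde W^T(\hat Q^k)^T\hat B^k\hat Q^k\tilde W$, which is the first equality. The second equality is then immediate from the definition \eqref{eq:Dhat}, $\hat D^k=(\hat Q^k)^{-T}(\hat B^k)^{-1}(\hat Q^k)^{-1}$, which inverts to $(\hat D^k)^{-1}=(\hat Q^k)^T\hat B^k\hat Q^k$.

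For strict convexity it suffices to show that $(\hat Q^k)^T\hat B^k\hat Q^k$ is positive definite whenever $\hat\rho^k>0$. Since $\hat Q^k$ is invertible by the explicit formula \eqref{eq:Qinvmatrix}, the matrix is congruent to $\hat B^k$, so it is enough to check that $\hat B^k$ is positive definite. Symmetry is obvious from $b_{ij}=b_{ji}$, and one reads off strict diagonal dominance by comparing $\hat B^k_{ii}=\sum_{m\ne i}b_{im}\hat\rho_i^k\hat\rho_m^k$ against the $\ell^1$ sum of its off-diagonal entries in row $i$ restricted to $j\le n-1$, which misses the strictly positive term $b_{in}\hat\rho_i^k\hat\rho_n^k$. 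The main obstacle is purely bookkeeping: keeping straight which objects live in the $n$-dimensional velocity space and which in the reduced $(n-1)$-dimensional space, and recognizing that the zero row and column sums of $\mathcal{B}$ are precisely what allow the substitution $w_i\mapsto w_i-w_n$ to be performed without any correction terms.
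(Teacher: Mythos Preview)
Your proof is correct and follows essentially the same route as the paper: both reduce the $n\times n$ quadratic form to the $(n-1)\times(n-1)$ block via the zero row/column sums of the full friction matrix, invoke the identity $w_i-w_n=\sum_m\hat Q^k_{im}W_m$ from \eqref{eq:asd}, and conclude positive definiteness from the strict diagonal dominance of $\hat B^k$. Your symmetric shift $w_i\mapsto w_i-w_n$ is a slightly cleaner packaging of the paper's index-by-index reduction, but the ingredients are identical.
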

	\begin{proof}
		By the assumption that $b_{ij}$ is symmetric, the following formula holds
		\begin{align*}
				\frac{1}{2} \sum_{i,j=1}^n b_{ij}\hat\rho_i^k\hat\rho_j^k(w_i-w_j)^2=\sum_{i=1}^n w_i \sum_{j=1}^n b_{ij}\hat{\rho}_i^k \hat{\rho}_j^k(w_i-w_j).
		\end{align*}
		Recalling \eqref{eq:tm11}, \eqref{eq:asd}, we also have
		\begin{align*}
			\sum_{j=1}^n b_{ij}\hat{\rho}_i^k \hat{\rho}_j^k(w_i-w_j) = \sum_{j,m=1}^{n-1} \hat{B}^k_{ij} \hat{Q}_{jm}^k \hat{\rho}_m w_m
		\end{align*}
		Therefore,
		\begin{align*}
			\frac{1}{2} \sum_{i,j=1}^n & b_{ij}\hat\rho_i^k\hat\rho_j^k(w_i-w_j)^2 \\
			=& \sum_{i=1}^n w_i \sum_{j,m=1}^{n-1} \hat{B}^k_{ij} \hat{Q}_{jm}^k \hat{\rho}_m w_m \\
			=& \sum_{i=1}^{n-1} w_i \sum_{j,m=1}^{n-1} \hat{B}^k_{ij} \hat{Q}_{jm}^k \hat{\rho}_m w_m - \sum_{s=1}^{n-1} \frac{\hat{\rho}^k_s w_s}{\hat{\rho}^k_n} \sum_{j,m=1}^{n-1} \left(-\sum_{i=1}^{n-1}\hat{B}^k_{ij}\hat{Q}_{jm}^k \hat{\rho}^k_m w_m\right) \\
			=&\sum_{s,i,j,m=1}^{n-1} \hat{\rho}^k_s w_s \left(\frac{\delta_{is}}{\hat{\rho}^k_s} + \frac{1}{\hat{\rho}^k_n}\right)\hat{B}_{ij}^k\hat{Q}^k_{jm} \hat{\rho}^k_m w_m \\
			=& \sum_{s,i,j,m=1}^{n-1} \hat{\rho}^k_s w_s \hat{Q}^k_{is} \hat{B}_{ij}^k\hat{Q}^k_{jm} \hat{\rho}^k_m w_m
			= \tilde{W}^T (\hat{Q}^k)^T \hat{B}^k\hat{Q}^k \tilde{W}. 
		\end{align*}
			Notice that $\hat{B}^k$ is a symmetric strictly diagonally dominant matrix with  positive diagonal entries since $\rho^k>0$ and thus is positive definite. Because of this and since $\hat Q^k$ is non-singular, we have
	\begin{align*}
		(\hat Q^k)^T \hat B^k \hat Q^k \text{ is positive definite}.
	\end{align*}
	Therefore, \eqref{eq:wformula} is a convex function of $\tilde W$. 
	\end{proof}

	We also need a lemma on the convexity of the discretized energy function $F_h(\tilde \rho)$, defined by \eqref{lem:7}
	    that incorporates the constraint $\sum_{i =1}^n \rho_i = 1$.
	
	\begin{lemma}\label{lem:st7}
	    The energy function $F_h=F_h(\tilde{\rho})$ is a convex function of $\tilde{\rho}$. 
	\end{lemma}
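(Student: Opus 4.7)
My plan is to reduce the convexity of $F_h(\tilde{\rho})$ to a pointwise statement and then compute a Hessian explicitly. Since
\begin{align*}
F_h(\tilde{\rho}) = h\sum_{\ell=1}^N g(\tilde{\rho}_\ell), \qquad g(\tilde{\rho}_\ell) = \sum_{i=1}^{n-1} \rho_{i,\ell}\log\rho_{i,\ell} + \Bigl(1-\sum_{i=1}^{n-1}\rho_{i,\ell}\Bigr)\log\Bigl(1-\sum_{i=1}^{n-1}\rho_{i,\ell}\Bigr),
\end{align*}
and a nonnegative linear combination of convex functions of disjoint blocks of variables is convex, it suffices to show that $g : U \to \mathbb{R}$ is convex on the open convex set
$U = \{\tilde{\rho}\in\mathbb{R}^{n-1} : \rho_i > 0,\; \sum_{i=1}^{n-1}\rho_i < 1\}$.

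First I would compute the gradient and Hessian of $g$. A direct calculation gives $\partial g/\partial\rho_i = \log\rho_i - \log(1-\sum_k\rho_k)$, and consequently
\begin{align*}
\frac{\partial^2 g}{\partial\rho_i\partial\rho_j} = \frac{\delta_{ij}}{\rho_i} + \frac{1}{\rho_n}, \qquad \rho_n := 1 - \sum_{k=1}^{n-1}\rho_k.
\end{align*}
This is precisely the matrix $Q(\rho)$ defined in \eqref{eq:Qmatrix}. So the content of the lemma amounts to showing that $Q(\rho)$ is positive definite on $U$.

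This I would verify directly from the decomposition $\nabla^2 g = D + \tfrac{1}{\rho_n}\mathbf{1}\mathbf{1}^T$, where $D = \operatorname{diag}(1/\rho_1,\ldots,1/\rho_{n-1})$ and $\mathbf{1}=(1,\ldots,1)^T\in\mathbb{R}^{n-1}$. For $\rho\in U$ one has $\rho_i > 0$ for $i=1,\ldots,n-1$ and $\rho_n > 0$, so $D$ is positive definite and $\tfrac{1}{\rho_n}\mathbf{1}\mathbf{1}^T$ is positive semidefinite; thus $\nabla^2 g \succ 0$ on $U$, and $g$ is (strictly) convex on $U$. Summing over $\ell$ yields the convexity of $F_h$. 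I do not expect a serious obstacle here — the only point worth stressing is that the hidden variable $\rho_n = 1-\sum_{i<n}\rho_i$ is also strictly positive on the admissible set appearing in $K_\delta$, which is exactly what makes the last term of $g$ well defined and its contribution to the Hessian finite.
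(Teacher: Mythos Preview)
Your proof is correct and follows essentially the same route as the paper: reduce to the pointwise function, compute the Hessian $\partial^2 g/\partial\rho_i\partial\rho_j = \delta_{ij}/\rho_i + 1/\rho_n$, and check positive definiteness. The paper writes out the quadratic form $\sum_i z_i^2/\rho_i + (\sum_i z_i)^2/\rho_n > 0$ directly, while you phrase the same computation as the matrix decomposition $D + \tfrac{1}{\rho_n}\mathbf{1}\mathbf{1}^T$ and note the identification with $Q(\rho)$ from \eqref{eq:Qmatrix}; these are the same argument.
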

	\begin{proof}
	    Considering the function
	    \begin{align*}
	        f =\sum_{i=1}^{n-1} \rho_i \log \rho_i + \rho_n \log \rho_n,\quad \rho_n = 1-\sum_{i=1}^{n-1} \rho_i,
	    \end{align*}
	    we have
	    \begin{align*}
	        &\frac{\partial f}{\partial \rho_i} = \log \rho_i +1 - (\log \rho_n +1) = \log \rho_i - \log \rho_n ,~~\frac{\partial^2 f}{\partial \rho_i \partial \rho_j} = \frac{1}{\rho_i}\delta_{ij} + \frac{1}{\rho_n}.
	    \end{align*}
	    Since for any $z\in \mathbb{R}^{n-1}$ and $z\not=0$,  
	    \begin{align*}
	        &\sum_{i,j=1}^{n-1} \frac{\partial^2 f}{\partial \rho_i \partial \rho_j}z_i z_j = \sum_{i,j=1}^{n-1} \left( \frac{1}{\rho_i}\delta_{ij} + \frac{1}{\rho_n}\right)z_iz_j = \sum_{i=1}^{n-1}\frac{1}{\rho_i} z_i^2 + \frac{1}{\rho_n}\left(\sum_{i=1}^{n-1} z_i\right)^2 > 0,
	    \end{align*}
	    the function $f$ is a convex function of $\tilde{\rho}$. Therefore, $F_h(\tilde{\rho})$ is convex in $\tilde{\rho}$.
	\end{proof}
	Using Lemmas \ref{lem:st6} and \ref{lem:st7},  we deduce that the optimization problem \eqref{eq:op} is equivalent to 
	\begin{align}\label{eq:jj}
		&\min_{(\tilde{\rho},\tilde{W}) \in \tilde K_\delta} \left\{ J = \frac{1}{2 \Delta t} \left[ \tilde W^T(\hat Q^k)^T \hat B^k \hat Q^k  \tilde W\right] + F_h(\tilde{\rho})\right\},
	\end{align}
	where
	\begin{align*}
		\tilde K_\delta = \{(\tilde{\rho},\tilde{W}):~&\tilde\rho \in \C_{\rm per}^{n-1},\tilde{W} \in \E_{\rm per}^{n-1};  \rho_{i,\ell} \ge \delta ,
		~~ \sum_{i=1}^{n-1} \rho_{i,\ell} \le 1- \delta \text{ and }\\&\rho_{i,\ell} - \rho_{i,\ell}^k + d_h( W_i)_{\ell}=0,\; \forall i =1,\ldots,n-1,\ell=1,\ldots,N\}.
	\end{align*}
Due to the above lemmas, the objective function $J$ is a convex function of $\tilde{W}$ and $\tilde{\rho}$ 
 (note that $(\hat Q^k)^T \hat B^k \hat Q^k$ is a fixed matrix determined from the previous step). The domain $\tilde K_\delta$  is affine in $\tilde{W}$  and it is convex and bounded in
 $\tilde{\rho}$. The optimization problem \eqref{eq:jj} has a unique minimizer according to standard optimization theory \cite{boyd2004convex}. Since the  problems
  \eqref{eq:op} and \eqref{eq:jj} are equivalent, there also exists a unique solution to the optimization problem \eqref{eq:op}.

	\emph{Step2. The minimizer does not touch the boundary.}
	Next, we show that there exists a constant $\delta_0>0$ such that the solution of the optimization problem \eqref{eq:op} could not touch the boundary of $K_\delta$ for $\delta \le \delta_0$.
	Recall that on the set $\tilde K_\delta$,
	$$
	\rho_i - \rho_i^k + d_h( W_i)=0.
	$$
	Hence, if we set
	\begin{align*}
		\tilde W = \hat D^k D_h \tilde f,   \quad   \tilde g = \tilde{\rho}-\tilde{\rho}^k \in \mathring{\C}_{\rm per}^{n-1} ,
	\end{align*}
	then according to the definition \eqref{eq:linversenorm},
	\begin{align}
		 \left[ \tilde W^T(\hat Q^k)^T \hat B^k \hat Q^k  \tilde W\right] 
		 = [( D_h \tilde f)^T  \hat D^k D_h \tilde f ] 
		 = \|\tilde{\rho}-\tilde{\rho}^k\|_{\mathcal{L}^{-1}_{\hat D^k}}^2.
		 \label{l2theory}
	\end{align}
	Therefore, the optimization problem \eqref{eq:jj} is equivalent to
	\begin{align}\label{eq:Jm}
		\min_{\tilde{\rho}\in \mathring{\tilde K}_\delta} \left\{J = \frac{1}{2 \Delta t}  \|\tilde{\rho}-\tilde{\rho}^k\|_{\mathcal{L}^{-1}_{\hat D^k}}^2 +  F_h(\tilde \rho)\right\},
	\end{align}
	over the set
	\begin{align*}
	    \mathring{\tilde K}_\delta = \bigg\{\tilde{\rho}:&~\tilde\rho - \tilde{\rho}^k \in \mathring{\mathcal{C}}_{\rm per}^{n-1};\rho_{i,\ell} \ge \delta,~\sum_{i=1}^{n-1} \rho_{i,\ell} \le 1-\delta,~\forall i =1,\ldots,n-1,\ell=1,\ldots,N\bigg\}.
	\end{align*}
 Recall the notation  $\tilde \rho = (\rho_1 , ... , \rho_{n-1})$ stands for the vector of the first $n-1$ densities which are computed 
at the grid points $l = 1, ..., N$. The density $\rho_n$ appears in the formulation \eqref{eq:Jm} only indirectly through the constraint \eqref{eq:consrho}.
Also,
 $\tilde\rho - \tilde{\rho}^k \in \mathring{\mathcal{C}}_{\rm per}^{n-1}$ means  $\sum_{\ell=1}^{N}(\rho_{i,\ell} - \rho_{i,\ell}^k) = 0$ for any $i = 1, ..., n-1$.

	Let $\tilde \rho^\star \in \mathring{\tilde K}_\delta$ be a minimizer of the optimization problem \eqref{eq:Jm}. We will show
	that  $\tilde \rho^\star$ does not lie on the boundary of $\mathring{\tilde K}_\delta$. If it lies on the boundary:
	\begin{enumerate}[(i)]
	    \item \label{en:1} either $\rho_{i,\ell}^\star =\delta$ for some $i=1,\ldots,n-1$ at some grid point $\ell$,
	    \item \label{en:2} or  $\sum_{i=1}^{n-1}  \rho^\star_{i,\ell} = 1-\delta$ at some grid point $\ell$.
	\end{enumerate}
 First consider the case \eqref{en:1}. Suppose that $\tilde\rho^\star$ touches the boundary at 
 the grid point $\ell_0$ 
 for the $i_0$-th component, that is 
	\begin{align}\label{eq:asdelta1}
	 	\rho^\star_{i_0,\ell_0}=\delta.
	 \end{align} 
	 We calculate the directional derivative of the objective function $J$ at $\tilde\rho^\star$ along the direction $\{\nu:\nu \in \mathbb{R}^{(n-1)\times N}\}$ with $\tilde\rho^\star + s\nu \in \mathring{\tilde K}_\delta$ as
	 \begin{align}\label{eq:Jdev}
	 	&\left. \frac{d}{ds} J(\tilde\rho^\star + s\nu) \right|_{s=0} \\
	 	&\quad = \left. \frac{d}{ds} \right|_{s=0} \left( \frac{1}{2 \Delta t}  \|\tilde{\rho}^\star + s\nu -\tilde{\rho}^k\|_{\mathcal{L}^{-1}_{\hat D^k}}^2 +  F_h(\tilde \rho^\star+s\nu)\right)\nonumber \\
	 	&\quad = \frac{1}{\Delta t} \left\langle \mathcal{L}^{-1}_{\hat{D}^k}(\tilde\rho^\star-\tilde\rho^k), \nu \right\rangle  + \sum_{i=1}^{n-1}\left\langle \log\rho^\star_i+1 - \log \left(1-\sum_{i=1}^{n-1}\rho^\star_i\right)-1, \nu_i\right\rangle \nonumber\\
	 	&\quad = \frac{1}{\Delta t} \left\langle \mathcal{L}^{-1}_{\hat{D}^k}(\tilde\rho^\star-\tilde\rho^k), \nu \right\rangle  + \sum_{i=1}^{n-1}\left\langle \left(\log \rho^\star_i- \log \left(1-\sum_{i=1}^{n-1}\rho^\star_i\right)\right), \nu_i \right\rangle. \nonumber
	 \end{align}
	 	 We divide into the following two cases:
	 \begin{enumerate}[(a)]
	     \item \label{en:a} $$\sum_{i=1}^{n-1} \rho^\star_{i,\ell_0} \ge \frac{1}{2},$$
	     \item \label{en:b} $$\sum_{i=1}^{n-1} \rho^\star_{i,\ell_0} < \frac{1}{2}. $$
	 \end{enumerate}
	
\smallskip
	  {\bf Case \eqref{en:1} and \eqref{en:a}}. Suppose $\{\rho^\star_{i,\ell_0}\}_{i=1}^{n-1}$ achieves its maximum at the $i_1$-th component
	  while $\{\rho^\star_{i_0,\ell}\}_{\ell=1}^N$ achieves its maximum at $\ell_1$. Define $\nu$ by
	 \begin{align*}
	 	\nu_{i,\ell}=\left\{\begin{array}{cl}
	 		1, &\text{for }i=i_0,\; \ell=\ell_0,\\
	 		-1,&\text{for }i=i_1,\; \ell=\ell_0,\\
	 		-1,&\text{for }i=i_0,\; \ell=\ell_1,\\
	 		1,&\text{for }i=i_1,\; \ell=\ell_1, \\
	 		0, &\text{otherwise}.
	 	\end{array}\right.
	 \end{align*}
	 Taking a variation in this direction, \eqref{eq:Jdev} becomes
	  \begin{align}\label{eq:Jderiv}
	 	&\left. \frac{1}{h}\frac{d}{ds} J(\tilde\rho^\star + s\nu) \right|_{s=0} \\
	 	&~ = \frac{1}{\Delta t} (\mathcal{L}^{-1}_{\hat{D}^k}(\tilde\rho^\star-\tilde\rho^k))_{i_0,\ell_0} -\frac{1}{\Delta t} (\mathcal{L}^{-1}_{\hat{D}^k}(\tilde\rho^\star-\tilde\rho^k))_{i_1,\ell_0} - \frac{1}{\Delta t}(\mathcal{L}^{-1}_{\hat{D}^k}(\tilde\rho^\star-\tilde\rho^k))_{i_0,\ell_1} \nonumber\\
	 	&\qquad+ \frac{1}{\Delta t} ((\mathcal{L}^{-1}_{\hat{D}^k}(\tilde\rho^\star-\tilde\rho^k))_{i_1,\ell_1}  + \log \rho^\star_{i_0,\ell_0} - \log \rho^\star_{i_1,\ell_0} -\log \rho^\star_{i_0,\ell_1} + \log \rho^\star_{i_1,\ell_1}. \nonumber
	 \end{align}
	
	 Since $\{\rho^\star_{i,\ell_0}\}_{i=1}^{n-1}$ achieves its maximum for the $i_1$-th component, in  the case \eqref{en:a} $\sum_{i=1}^{n-1}\rho^\star_{i,\ell_0} \ge \frac{1}{2}$ implies
	 \begin{align}\label{eq:cal1}
	 	\rho^\star_{i_1,\ell_0}  \ge \frac{1}{2(n-1)}.
	 \end{align}
	Since $\{\rho^\star_{i_0,\ell}\}_{\ell=1}^N$ achieves its maximum at the grid point $\ell_1$ and $\tilde\rho^\star-\tilde\rho^{k} \in \mathring{\C}_{\rm per}^{n-1}$,  
\begin{equation} \label{eq:cal2}
\rho^\star_{i_0,\ell_1} \ge \frac{1}{N}  \sum_{\ell=1}^N {\rho^\star_{i,\ell}} = \frac{1}{N} \sum_{\ell=1}^N {\rho^k_{i,\ell}}  \ge \frac{m}{hN}
\end{equation}
	 where $m$ is set to be
	 	$m := \min_{i\in\{1,\ldots,n-1\}}\left\{h\sum_{\ell=1}^N {\rho^k_{i,\ell}}\right\} \, .$
	 Moreover, for $\tilde\rho^\star \in \mathring{\tilde{{K}}}_\delta$ the constraint $\sum_{i=1}^{n-1}\rho^\star_{i,\ell_1}\le 1-\delta$ implies
	 \begin{align}\label{eq:cal3}
	 	\rho^\star_{i_1,\ell_1} < 1.
	 \end{align}
	 
	 Next, we show that for $\delta$ satisfying 
	 \begin{align}\label{restr1}
	  	\delta \le \min\left\{ \frac{m}{2hN},\frac{1}{4(n-1)}\right\},
	  \end{align} 
	  if $s >0$ is selected sufficiently small and $\nu$ as above we have  $\tilde\rho^\star + s\nu\in \mathring{\tilde K}_\delta$.
	  Indeed, 
	 \begin{align*}
	 	&\rho^\star_{i_0,\ell_0}+ s = \delta +s \ge \delta,~~\rho^\star_{i_1,\ell_1} + s \ge \delta + s,\\
	 	&\rho^\star_{i_0,\ell_1}-s \ge \frac{m}{hN} -s \ge \delta,~~
	 	\rho^\star_{i_1,\ell_0}-s \ge \frac{1}{2(n-1)}-s \ge \delta ,\\
	 	&\sum_{i=1}^{n-1} ( \rho^\star_{i,\ell_0} + s\nu_{i,\ell_0} ) = \sum_{i=1}^{n-1} \rho^\star_{i,\ell_0} \le 1-\delta,~~\sum_{i=1}^{n-1}( \rho^\star_{i,\ell_1} + s\nu_{i,\ell_1}) = \sum_{i=1}^{n-1} \rho^\star_{i,\ell_1}\le 1-\delta,
	 \end{align*}
	 imply that if $\delta$ satisfying \eqref{restr1} and for $s>0$ small we have $\tilde\rho^\star + s\nu \in \mathring{\tilde K}_\delta$.

	 Since $\tilde\rho^\star-\tilde{\rho}^k \in \mathring{\C}_{\rm per}^{n-1}$ and $\|\tilde{\rho}^\star\|_{L^\infty}, \|\tilde{\rho}^k\|_{L^\infty} \le 1$,
	 we can apply Lemma \ref{lm5} to \eqref{eq:Jderiv} with $\phi=\tilde\rho^\star-\tilde\rho^k$ and $\Phi=\hat{D}^k$ and use \eqref{eq:asdelta1}
	 and \eqref{eq:cal1}-\eqref{eq:cal3}  to get
	 \begin{align*}
	 	\left. \frac{1}{h}\frac{d}{ds} J(\tilde\rho^\star + s\nu) \right|_{s=0} &\le \frac{8C}{\lambda_{\min}^k\Delta t} h^{-\frac{1}{2}}(n-1)^{\frac12} + \log \delta - \log\frac{1}{2(n-1)} - \log\frac{m}{hN} +\log 1.
	 \end{align*}
	 Here $\lambda^k_{\min}$ is the minimum eigenvalue of $\hat{D}^k$.
	Taking
	 \begin{align}\label{eq:delta}
	 	\delta_0 \le \min\left\{\frac{m}{4(n-1)hN} e^{- \frac{8C}{\lambda_{\min}^k\Delta t} h^{-\frac{1}{2}}(n-1)^{\frac12}},\frac{m}{2hN},\frac{1}{4(n-1)}\right\},
	 \end{align}
	 we have for $\delta \le \delta_0$, $\tilde\rho^\star + s\nu \in \mathring{\tilde K}_\delta$ and
	 \begin{align}\label{eq:jle0}
	 	\left. \frac{1}{h}\frac{d}{ds} J(\tilde\rho^\star + s\nu) \right|_{s=0} \le -\log 2  <0.
	 \end{align}
	 This contradicts the assumption that $\tilde\rho^\star$ is a minimizer, and so the situation \eqref{en:a} cannot occur.
	 
\smallskip
	{\bf Case \eqref{en:1} and \eqref{en:b}}.
	 Again $\rho_{i_0, \ell_0} = \delta$ and suppose now that  $\{\rho^\star_{i_0,\ell}\}_{\ell=1}^N$ achieves its maximum at the $\ell_1$-th grid point.
	  We take	 
	 \begin{align*}
	 	\nu_{i,\ell}=\left\{\begin{array}{cl}
	 		1, &\text{for }i=i_0,\; \ell=\ell_0,\\
	 		-1, &\text{for }i=i_0,\; \ell=\ell_1, \\
	 		0, &\text{otherwise},
	 	\end{array}\right.
	 \end{align*}
	 and note that \eqref{eq:cal2} still holds in the present setting. Using \eqref{eq:asdelta1}, \eqref{en:b}, \eqref{eq:cal2}, and the inequality
	  $1-\sum_{i=1}^{n-1} \rho^\star_{i,\ell_1} \le 1 - (n-1) \delta \le 1,	$
	 we obtain
	 \begin{align*}
	 	&\left. \frac{1}{h}\frac{d}{ds} J(\tilde\rho^\star + s\nu) \right|_{s=0} \\
	 	&\quad = \frac{1}{\Delta t} (\mathcal{L}^{-1}_{\hat{D}^k}(\tilde\rho^\star-\tilde\rho^k))_{i_0,\ell_0}  +  \log \rho^\star_{i_0,\ell_0}- \log \left(1-\sum_{i=1}^{n-1}\rho^\star_{i,\ell_0}\right)\\
	 	&\qquad - \frac{1}{\Delta t} (\mathcal{L}^{-1}_{\hat{D}^k}(\tilde\rho^\star-\tilde\rho^k))_{i_0,\ell_1}  -  \log \rho^\star_{i_0,\ell_1}+ \log \left(1-\sum_{i=1}^{n-1}\rho^\star_{i,\ell_1}\right)\nonumber\\
	 	&\quad\le \frac{4C}{\lambda_{\min}^k\Delta t} h^{-\frac{1}{2}}(n-1)^{\frac12} + \log \delta - \log \frac{1}{2} -\log \frac{m}{hN} + \log 1 \\
	 	&\quad \le  \frac{4C}{\lambda_{\min}^k\Delta t} h^{-\frac{1}{2}}(n-1)^{\frac12} + \log \delta - \log \frac{m}{2hN} .
	 \end{align*}
	 Taking 
	 \begin{align}
	 	\delta_0 \le \min\left\{\frac{m}{4hN}e^{-\frac{4C}{\lambda_{\min}^k\Delta t} h^{-\frac{1}{2}}(n-1)^{\frac12} },\frac{m}{2hN}\right\}
	\label{eq:delta2}
	 \end{align}
	 leads to $\tilde\rho^\star + s\nu \in \mathring{\tilde K}_\delta$ and
	 \begin{align*}
	 	\left. \frac{1}{h}\frac{d}{ds} J(\tilde\rho^\star + s\nu) \right|_{s=0} =-\log 2<0,
	 \end{align*}
	 which contradicts the hypothesis that $\tilde\rho^\star$ is a minimizer; so the situation \eqref{en:b} cannot occur.
	
 \smallskip
	 {\bf Case \eqref{en:2}}.
	 Assume there exists a grid index $\ell_0$ such that
	 \begin{align}\label{eq:asi2}
	     \sum_{i=1}^{n-1}  \rho^\star_{i,\ell_0} = 1-\delta,
	 \end{align}
	 and suppose the maximum value of $\{\rho^\star_{i,\ell_0}\}_{i=1}^{n-1}$ occurs at the index $i_0$. Then \eqref{eq:asi2}
	 implies that for  $\delta \le 1/2$ equation \eqref{eq:cal1} holds, that is
	 \begin{align}\label{eq:ss1}
	   \rho^\star_{i_0,\ell_0} \ge \frac{1-\delta}{n-1} \ge \frac{1}{2(n-1)}.
	 \end{align}

	 	 Setting $\rho^k_{\min} := \min_{\substack{i=1,\ldots,n,\\ \ell=1,\ldots,N} }\rho^k_{i,\ell} >0$, 
	 we have
	 	$\sum_{i=1}^{n-1} \rho_{i,\ell}^k = 1 - \rho^k_{n,\ell} \le 1 - \rho^k_{\min}.$
	 Since $\tilde\rho^\star - \tilde\rho^k \in \mathring{C}_{\rm per}^{n-1}$, we have
	 \begin{align*}
	 	\sum_{\ell=1}^N \sum_{i=1}^{n-1} \rho^\star_{i,\ell} = \sum_{\ell=1}^N \sum_{i=1}^{n-1} \rho^k_{i,\ell} \le N(1-\rho^k_{\min}).
	 \end{align*}
	 Suppose $\left\{\sum_{i=1}^{n-1} \rho^\star_{i,\ell}\right\}_{\ell=1}^N$ achieves its minimum at the grid point $\ell_1$. Then
	 using \eqref{eq:asi2} it follows for $\delta \le \frac{1}{2}\rho_{\min}^k$,
	 \begin{align}
	 \sum_{i=1}^{n-1} \rho^\star_{i,\ell_1} &\le \tfrac{1}{N-1} \sum_{\substack{\ell = 1, ,,, , N  \\ \ell \ne \ell_0}} \sum_{i=1}^{n-1} \rho^\star_{i,\ell} 
	  \nonumber
	 \\
	 &= \tfrac{1}{N-1} \left ( \sum_{\ell = 1}^N  \sum_{i=1}^{n-1} \rho^\star_{i,\ell}  - \sum_{i=1}^{n-1} \rho^\star_{i,\ell_0} \right )
	 \nonumber
	 \\
	&\le \tfrac{1}{N-1}\left(N(1-\rho^k_{\min}) - (1-\delta)\right)
	 \nonumber
	\\
	&\le 1- \frac{ N \rho^k_{\min}-\delta}{N-1} 
	 \nonumber
	\\
	&\le 1-\frac{2N-1}{2(N-1)}\rho_{\min}^k \, .
	\label{eq:as3}
	 \end{align}
	Taking now
         \begin{align*}
	 	\nu_{i,\ell}=\left\{\begin{array}{cl}
	 		-1, &\text{for }i=i_0,\; \ell=\ell_0,\\
	 		1, &\text{for }i=i_0,\; \ell=\ell_1,\\
	 		0, &\text{otherwise},
	 	\end{array}\right.
	 \end{align*}
	 into \eqref{eq:Jdev}
	 and using  \eqref{eq:asi2}, \eqref{eq:ss1}, \eqref{eq:as3}, Lemma \ref{lm5}, and the inequality
	 $\rho^\star_{i_0,\ell_1}\le 1-\delta\le 1$ we obtain
	 \begin{align*}
	 	&\left. \frac{1}{h}\frac{d}{ds} J(\tilde\rho^\star + s\nu) \right|_{s=0} \nonumber\\
	 	&\quad = -\frac{1}{\Delta t} (\mathcal{L}^{-1}_{\hat{D}^k}(\tilde\rho^\star-\tilde\rho^k))_{i_0,\ell_0}  -  \log \rho^\star_{i_0,\ell_0}+ \log \left(1-\sum_{i=1}^{n-1}\rho^\star_{i,\ell_0}\right)\nonumber\\
	 	&\qquad+\frac{1}{\Delta t} (\mathcal{L}^{-1}_{\hat{D}^k}(\tilde\rho^\star-\tilde\rho^k))_{i_0,\ell_1}  +  \log \rho^\star_{i_0,\ell_1} - \log \left(1-\sum_{i=1}^{n-1}\rho^\star_{i,\ell_1}\right) \nonumber\\
	 	&\quad\le \frac{4C}{\lambda_{\min}^k\Delta t} h^{-\frac{1}{2}}(n-1)^{\frac12} - \log \frac{1}{2(n-1)} + \log \delta 
	 	+ \log 1 - \log \frac{2N-1}{2(N-1)} \rho_{\min}^k.
	 \end{align*}
	 	 
	 Taking 
	 \begin{align}\label{eq:delta0}
	 	\delta_0 \le \min\left\{\frac{(2N-1)\rho_{\min}^k}{8(N-1)(n-1)} e^{-\frac{4C}{\lambda_{\min}^k\Delta t} h^{-\frac{1}{2}}(n-1)^{\frac12}},
		 \frac{1}{2}\rho_{\min}^k,  \frac{1}{4(n-1)}  \right\},
	 \end{align}
	 we see that for $\delta < \delta_0$ the above inequality becomes negative. In addition, 
	 \begin{align*}
	 	&\rho^\star_{i_0,\ell_0} - s \ge \frac{1}{2(n-1)} -s \ge \delta, ~~\rho^\star_{i_0,\ell_1} + s \ge \delta +s \ge \delta, \\
	 	&\sum_{i=1}^n \rho^\star_{i,\ell_0} -s =1-\delta -s \le 1-\delta, ~~\sum_{i=1}^{n-1}\rho^\star_{i,\ell_1}+s \le 1-\frac{2N-1}{N-1} \delta +s \le 1-\delta,
	 \end{align*}
	  imply that for $\delta < \delta_0$ the variation $\tilde\rho^\star + sv \in \mathring{\tilde K}_\delta$ for sufficiently small $s>0$.
	  This contradicts the assumption that $\tilde\rho^\star$ is a minimizer and thus case \eqref{en:2} cannot occur.

	 In summary,  setting $\delta_0$ to be the minimum among \eqref{eq:delta}, \eqref{eq:delta2} and \eqref{eq:delta0} we conclude that
	 \eqref{en:1} and  \eqref{en:2} cannot occur. Consequently, for $\delta \le \delta_0$, the minimizer to the optimization problem \eqref{eq:Jm}, or equivalently \eqref{eq:op}, does not occur at the boundary.

	\emph{Step 3. The equivalence with the numerical scheme.}
	Any interior minimizer $\tilde\rho^{*}$ of \eqref{eq:Jm} must satisfies 
	\begin{align}\label{eq:pJ}
		\left\langle \frac{\partial J}{\partial \tilde\rho}(\tilde\rho^\star), \nu\right\rangle =0, 
	\end{align}
	for any $\nu \in \mathring{\C}_{\rm per}^{n-1}$ which is its tangent space, i.e., \eqref{eq:Jdev} equals zero. Due to the arbitary choice of $\nu$, we get
	\begin{align*}
		\frac{1}{\Delta t} \mathcal{L}^{-1}_{\hat D^k}(\tilde\rho^\star-\tilde\rho^k)_i + \log \rho^\star_i - \log \left(1-\sum_{j=1}^n \rho^\star_j\right)  = C_i,
	\end{align*}
	with $C_i, i=1,\ldots n-1$ being  constants,
	from which it follows that for $i=1,\ldots,n-1$,
	\begin{align*}
		\frac{\rho_i^\star-\rho_i^k}{\Delta t} =& -\mathcal{L}_{\hat D^k} \left(\log \tilde\rho^\star - \log \left(1-\sum_{j=1}^n \tilde\rho^\star_j\right)\right)_i 
		= \sum_{j=1}^{n-1} d_h(\hat D^k_{ij}D_h(\log \rho^\star_j - \log \rho_n^\star)).
	\end{align*}
	By Lemma \ref{lm4}, $\tilde\rho^\star$ satisfies the numerical scheme \eqref{eq:sc1}-\eqref{eq:sc3}.

	Conversely, assume $\rho^{k+1}>0$ is a solution of the numerical scheme \eqref{eq:sc1}-\eqref{eq:sc3}, we can reverse the above calculation with $C_i=0$ to show that \eqref{eq:pJ} holds, which together with the fact that the convex optimization problem \eqref{eq:Jm} has a unique interior minimizer, implies that $\rho^{k+1}$ is also the minimizer of \eqref{eq:Jm}, or equivalently of \eqref{eq:op}. 
	\end{proof}

\subsection{Properties of the scheme }
The positivity-preserving and energy stability properties of the scheme follow directly from Theorem \ref{theorem}.
\begin{theorem}\label{thm:prop}
	Assume $\rho^0$ defined in (\ref{ini}) is positive, the solution of the numerical scheme \eqref{eq:sc1}-\eqref{eq:sc2} then satisfies
	\begin{enumerate}
		\item (Positivity-preserving) $\rho^k >0$ for any $k\ge 1$,
		\item (Unconditionally energy stability) the inequality
		\begin{align}\label{eq:Energystability}
			F_h(\rho^{k}) + \|\tilde\rho^k - \tilde\rho^{k-1}\|^2_{\LL^{-1}_{\hat D^k}}  \le F_h(\rho^{k-1})
		\end{align}
		holds for any $k\ge1$.
	\end{enumerate}
\end{theorem}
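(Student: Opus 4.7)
\emph{Plan.} Both properties follow essentially as corollaries of the equivalence between the scheme and the optimization problem \eqref{eq:Jm} established in Theorem \ref{theorem}. The strategy is inductive: given $\rho^k > 0$ with $\sum_{i} \rho^k_{i,\ell} = 1$, I apply Theorem \ref{theorem} to obtain $\rho^{k+1}$ as the unique interior minimizer of \eqref{eq:op}, then exploit the minimization inequality to get the dissipation bound on the free energy.

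\emph{Positivity (by induction on $k$).} The base case $\rho^0 > 0$ is the assumption \eqref{hypdata} (together with \eqref{ini}). For the inductive step, assume $\rho^k > 0$ and $\sum_i \rho^k_{i,\ell}=1$. Theorem \ref{theorem} provides $\delta_0 > 0$ such that for every $0 < \delta \le \delta_0$, the scheme's solution $\rho^{k+1}$ equals the unique minimizer of \eqref{eq:op} over $K_\delta$. Choosing any such $\delta > 0$, admissibility in $K_\delta$ forces $\rho^{k+1}_{i,\ell} \ge \delta > 0$, while Lemma \ref{lm1} preserves $\sum_i \rho^{k+1}_{i,\ell} = 1$. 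Hence the inductive hypothesis is reproduced at time step $k+1$, closing the induction.

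\emph{Energy stability.} Once positivity is in hand, I use the reduced form \eqref{eq:Jm} of the optimization problem, whose objective is
\[
J(\tilde\rho) \;=\; \frac{1}{2\Delta t}\,\|\tilde\rho - \tilde\rho^{k}\|_{\mathcal{L}^{-1}_{\hat D^{k}}}^{2} \;+\; F_h(\tilde\rho),
\]
and whose admissible set $\mathring{\tilde K}_\delta$ contains $\tilde\rho^{k}$ itself (for $\delta$ below $\min_{i,\ell}\rho^{k}_{i,\ell}$, which is positive by the first part). Since $\tilde\rho^{k+1}$ is the minimizer, evaluating at the admissible competitor $\tilde\rho = \tilde\rho^{k}$ gives
\[
\frac{1}{2\Delta t}\,\|\tilde\rho^{k+1} - \tilde\rho^{k}\|_{\mathcal{L}^{-1}_{\hat D^{k}}}^{2} + F_h(\tilde\rho^{k+1}) \;=\; J(\tilde\rho^{k+1}) \;\le\; J(\tilde\rho^{k}) \;=\; F_h(\tilde\rho^{k}),
\]
which is precisely \eqref{eq:Energystability} after a shift $k \mapsto k-1$ of the index.

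\emph{Expected obstacle.} There is essentially no new analytical difficulty: all of the substantive work (convexity of the objective, existence and uniqueness of a minimizer, and the key fact that minimizers stay in the interior) has been carried out in Theorem \ref{theorem} via Lemmas \ref{lem:st6}, \ref{lem:st7}, and \ref{lm5}. The only point requiring mild care is verifying that $\tilde\rho^{k}$ is itself admissible for the $(k+1)$-st optimization problem, which amounts to choosing $\delta$ no larger than $\min_{i,\ell}\rho^{k}_{i,\ell}$ and no larger than the $\delta_0$ furnished by Theorem \ref{theorem} at this step; both are strictly positive, so a valid $\delta$ exists at every iteration.
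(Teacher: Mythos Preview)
Your proposal is correct and follows essentially the same approach as the paper: both parts are obtained as immediate corollaries of Theorem~\ref{theorem}, with positivity coming from the fact that the minimizer lies in $K_\delta$ for some $\delta>0$ at each step, and energy stability from comparing $J(\tilde\rho^{k+1})$ with $J(\tilde\rho^{k})$. Your write-up is in fact slightly more careful than the paper's own proof in making explicit the induction and the admissibility of $\tilde\rho^{k}$ as a competitor.
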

\begin{proof}
	1. Starting from $\rho_0$, we apply Theorem \ref{theorem} recursively to obtain
	\begin{align*}
		\rho^{k} \in K_{\delta_k} 
	\end{align*}
	for some constant $\delta_k$ that is chosen for each step by the minimum among \eqref{eq:delta}, \eqref{eq:delta2} and \eqref{eq:delta0}. 
	This yields for every $k$,
	\begin{align*}
		\rho^k \in \bigcap_{k=1}^\infty K_{\delta_k} \subset K_{0}\backslash \{0\},
	\end{align*}
	so that $\rho^k>0$.

	2. Since the solution of the numerical scheme \eqref{eq:sc1}-\eqref{eq:sc3} is the minimizer of \eqref{eq:Jm}, we have
	\begin{align*}
		J(\rho^{k+1}) \le J(\rho^k),
	\end{align*}
	which is \eqref{eq:Energystability}.
\end{proof}

\section{Multidimensional case}\label{sec:mu}
The scheme can be generalized to the multidimensional case and similar proprties can be established.
Before we present the multi-dimensional scheme, we introduce some notations following \cite{wise2009energy}. Consider two multidimensional grids define by \hfill\break
\begin{align*}
	&\mathcal{C}^{d}  :=\underbrace{\mathcal{C}\times \cdots\times \mathcal{C}}_d, \quad \mathcal{E}_{x_s}:= \underbrace{\C\times\cdots \times\E\times \cdots \times \C}_d, ~s=1,\ldots,d,
\end{align*}
and the functions on them 
\begin{align*}
	\mathcal{C}^{d}_{\rm per}:=\{f:\mathcal{C}^d\to \mathbb{R}\}, \quad \mathcal{E}^{d}_{x_s,\rm per}:=\{f:\mathcal{E}_{x_s}^d\to \mathbb{R}\}, \quad \E^d_{\rm per}:=\left\{f:\bigcup_{s=1}^d \E_{x_s}^d\to\mathbb{R}\right\},
\end{align*}
as well as the vector functions,
$(\mathcal{C}_{\rm per}^d)^n:=\{f=(f_1,\ldots,f_n):f_i\in \mathcal{C}^d_{\rm per},i=1,\ldots,n\}$,
$(\mathcal{E}_{\rm per}^d)^n:=\{f=(f_1,\ldots,f_n):f_i\in {\E}^d_{\rm per},i=1,\ldots,n\}.$
We also define the space
\begin{align*}
 	(\mathring{\C}_{\rm per}^d)^n:=\left\{f\in (\mathcal{C}_{\rm per}^d)^n:\sum_{\ell \in \{1,\ldots,N\}^d}f_{i,\ell}=0,i=1,\ldots,n\right\}.
 \end{align*} 
We use $f_{\ell_1,\ldots,\ell_d}$ to denote the value of a function $f$ at the grid point $(x_1={\ell_1}h,\ldots,x_d={\ell_d}h)$.
We introduce the finite difference operators $D_h: \C_{\rm per}^d \mapsto \E_{\rm per}^d$ and $d_h: \E_{\rm per}^d \mapsto \C_{\rm per}^d$ as
\begin{align*}
	D_h f_{\ell_1,\ldots,\ell_s+\frac12,\ldots,\ell^d} = \frac{f_{\ell^1,\ldots,\ell^s+1,\ldots,\ell^d}-f_{\ell^1,\ldots,\ell^s,\ldots,\ell^d}}{h},
\end{align*}
and 
\begin{align*}
	d_h f_{\ell_1,\ldots,\ell_d}:=\sum_{s=1}^d \frac{f_{\ell^1,\ldots,\ell^s+\frac{1}{2},\ldots,\ell^d}-f_{\ell^1,\ldots,\ell^s-\frac{1}{2},\ldots,\ell^d}}{h}.
\end{align*}
We also define for $f \in \C_{\rm per}^d$,
	$\hat{f}_{\ell^1,\ldots,\ell^s+\frac{1}{2},\ldots,\ell^d} = \frac{f_{\ell^1,\ldots,\ell^s+1,\ldots,\ell^d} + f_{\ell^1,\ldots,\ell^s,\ldots,\ell^d}}{2},~~s=1,\ldots,d,$
so that $\hat{f} \in \E_{\rm per}^d$.
We define the inner products 
\begin{align*}
	&\langle f,g \rangle :=h^d\sum_{i=1}^n\sum_{\ell\in\{1,\ldots,N\}^d} f_{i,\ell} g_{i,\ell},~~ \forall f,g \in (\C^d_{\rm per})^n, \\
	&[f,g] := h^d \sum_{i=1}^n \sum_{\ell_1,\ldots,\ell_n=1}^N f_{i,\ell_1,\ldots,\ell_s+\frac12,\ldots,\ell_d}g_{i,\ell_1,\ldots,\ell_s+\frac12,\ldots,\ell_d},~~ \forall f,g \in (\E_{\rm per}^d)^n.
\end{align*}
The following summation-by-parts formula holds for any $f\in (\C^d_{\rm per})^n$ and $\phi\in (\E_{\rm per}^d)^n$,
\begin{align*}
	\langle f,d_h \phi\rangle = -[D_h f,\phi].
\end{align*}
Next we define a norm on $(\mathring{\C}_{\rm per}^d)^{n-1}$. Suppose $\Phi$ is a $(n-1)\times(n-1)$ symmetric positive definite matrix, with $\Phi_{ij} \in \E^d_{\rm per}$. We introduce the following operator 
\begin{align*}
	\LL_{\Phi} f = -d_h(\Phi D_h f) = -\sum_{j=1}^n d_h(\Phi_{ij} D_h f_j),
\end{align*}
where the multiplication $\Phi_{ij} D_h f_j$ is taken elementwise on the grid points. For any $g\in (\mathring{\C}_{\rm per}^d)^{n-1}$, let $f$ be determined by $g=\mathcal{L}_\Phi f$, we define the following norm
\begin{align}\label{eq:ginv}
	\|g\|_{\mathcal{L}_\Phi^{-1}}^2: = [D_h f,\Phi D_h f].
\end{align}

With the above notations, the numerical scheme for the system \eqref{eq:f1}-\eqref{eq:f2} is
\begin{align}
	\frac{\rho_i^{k+1}-\rho_i^k}{\Delta t} + d_h(\hat{\rho}_i^k v_i^{k+1})=&0,\label{eq:scd1}\\
	D_h \log \rho_i^{k+1} - \frac{1}{\sum_{i=1}^n \hat{\rho}_i^k}\sum_{j=1}^n \hat{\rho}_j^k D_h \log \rho_i^{k+1}  = &-\sum_{j=1}^n b_{ij}  \hat{\rho}_j^k (v_i^{k+1}-v_j^{k+1}),\label{eq:scd2}\\
	\sum_{i=1}^n \hat{\rho}_i^k v_i^{k+1} =& 0,\label{eq:scd3}
\end{align}
subject to initial data 
\begin{align}\label{ini+}
\rho_{i, \ell}^0=\rho_{i0}(x_\ell), \quad i=1,\ldots, n, \quad \ell=\{1,\ldots, N\}^d.
\end{align}
All properties proved for the one dimensional case carry over the $d$-dimensional case. The following theorem holds.
 \begin{theorem} \label{thm:propd}
 	Suppose $\rho^0>0$. 
	The solution of the numerical scheme \eqref{eq:scd1}-\eqref{eq:scd3} satisfies
 	\begin{enumerate}
 		\item (Conservation of mass.) For $k\ge 1$,
 		\begin{align*}
 			\sum_{i=1}^n  \rho_{i,\ell}^k = \sum_{i=1}^n\rho_{i,\ell}^0,~~ \text{ for all }\ell\in\{1,\ldots,d\}^N,
 		\end{align*}
 		and 
 		\begin{align*}
 			\sum_{\ell\in\{1,\ldots,d\}^N}\rho_{i,\ell}^k = 	\sum_{\ell\in\{1,\ldots,d\}^N}\rho_{i,\ell}^0,~~\text{ for all }i=1,\ldots, n.
 		\end{align*}
 		\item (Positivity-preserving.) For $k \ge 1$,
 		\begin{align*}
 			\rho^k >0.
 		\end{align*}
 		\item (Unconditional energy stability.) For $k \ge 1$, the following inequality holds:
 		\begin{align*}
 			F_h(\rho^{k}) + \|\tilde\rho^k - \tilde\rho^{k-1}\|^2_{\LL^{-1}_{\hat D^k}}  \le F_h(\rho^{k-1}),
 		\end{align*}
 		where
 			$F_h(\rho):= \left\langle \sum_{i=1}^n \rho_i\log \rho_i\right\rangle .$
 	\end{enumerate}
 \end{theorem}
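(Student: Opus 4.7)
The plan is to follow the strategy developed for the one-dimensional case in Sections \ref{sec:sc} and \ref{sec:op}, replacing each ingredient by its $d$-dimensional analog. First, for the conservation statements, the proofs of Lemmas \ref{lm1} and \ref{lm2} transfer verbatim: summing \eqref{eq:scd1} over $i=1,\ldots,n$ and using the constraint \eqref{eq:scd3} gives pointwise total mass conservation at every $\ell\in\{1,\ldots,N\}^d$, while summing \eqref{eq:scd1} over $\ell\in\{1,\ldots,N\}^d$ collapses the $d_h$ term by discrete periodicity and yields component-wise mass conservation. These two identities use only the tensorial structure of the lattice and the fact that $d_h$ is a discrete divergence, so no new ideas are needed.

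The core of the proof is the $d$-dimensional analog of Theorem \ref{theorem}. I would first extend Lemma \ref{lm4} to the multidimensional lattice: since its derivation is purely algebraic and performed at a single edge point $\ell+\tfrac{1}{2}e_s$, the formula $\hat{\rho}_i^k v_i^{k+1} = -\sum_{j=1}^{n-1}\hat{D}_{ij}^k D_h(\log\rho_j^{k+1}-\log\rho_n^{k+1})$ still holds component-by-component with the same matrices $\hat Q^k$, $\hat B^k$, $\hat D^k$. Substituting this into \eqref{eq:scd1} and using the $d$-dimensional norm $\|\cdot\|_{\mathcal{L}^{-1}_{\hat D^k}}$ from \eqref{eq:ginv}, I would then reformulate the scheme as the constrained minimization
\begin{align*}
\min_{\tilde{\rho}\in \mathring{\tilde{K}}_\delta}\left\{J = \frac{1}{2\Delta t}\|\tilde{\rho}-\tilde{\rho}^k\|_{\mathcal{L}^{-1}_{\hat D^k}}^2 + F_h(\tilde{\rho})\right\},
\end{align*}
over the admissible set $\mathring{\tilde{K}}_\delta$ built from $(\mathring{\C}^d_{\rm per})^{n-1}$. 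The convexity of $J$ in $\tilde{\rho}$ follows from Lemmas \ref{lem:st6} and \ref{lem:st7}, whose proofs are pointwise and dimension-agnostic, so a unique minimizer on $\mathring{\tilde{K}}_\delta$ exists for any $\delta>0$.

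The main obstacle is the multidimensional analog of Step 2 in the proof of Theorem \ref{theorem}, namely showing that for $\delta\le\delta_0$ the minimizer lies in the interior of $\mathring{\tilde{K}}_\delta$. This requires an analog of Lemma \ref{lm5}, where the discrete Poincar\'e inequality on the torus $\mathbb{T}^d$ still yields $\|f\|_{L^2}\le C_P\|D_h f\|_{L^2}$, but the discrete inverse inequality now scales as $\|f\|_{L^\infty}\le C_1 h^{-d/2}\|D_h f\|_{L^2}$, so
\begin{align*}
\|\mathcal{L}^{-1}_{\Phi}\phi\|_{L^\infty}\le \frac{CM}{\lambda_{\min}}h^{-d/2}(n-1)^{1/2}.
\end{align*}
With this estimate in hand, the three subcases analyzed in Step 2 (the component-floor boundary subdivided by whether $\sum_{i=1}^{n-1}\rho^\star_{i,\ell_0}$ is $\ge 1/2$ or $<1/2$, and the sum-ceiling boundary) all go through with the same test directions $\nu\in(\mathring{\C}^d_{\rm per})^{n-1}$ that are nonzero at only two grid points; the pigeonhole-type lower bounds such as $\rho^\star_{i_0,\ell_1}\ge m/(hN)^d$ require replacing $N$ by $N^d$, and the threshold $\delta_0$ is chosen as the minimum of the analogs of \eqref{eq:delta}, \eqref{eq:delta2}, \eqref{eq:delta0} with $h^{-1/2}$ replaced by $h^{-d/2}$. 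The resulting directional derivative is negative, contradicting the minimizer property.

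Once interiority of the minimizer is established, the argument concludes as in Step 3 of Theorem \ref{theorem}: the Euler-Lagrange equation $\langle \partial_{\tilde{\rho}}J(\tilde{\rho}^\star),\nu\rangle=0$ for all $\nu\in(\mathring{\C}^d_{\rm per})^{n-1}$ gives $\frac{1}{\Delta t}\mathcal{L}^{-1}_{\hat D^k}(\tilde{\rho}^\star-\tilde{\rho}^k)_i + \log\rho_i^\star - \log\rho_n^\star = C_i$, which upon applying $\mathcal{L}_{\hat D^k}$ and invoking the multidimensional version of Lemma \ref{lm4} reproduces \eqref{eq:scd1}-\eqref{eq:scd3}. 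Iterating over $k$ yields $\rho^k>0$ for all $k\ge 1$, establishing positivity-preservation; and evaluating $J(\rho^{k+1})\le J(\rho^k)$ with $\tilde{\rho}=\tilde{\rho}^k$ (so that the quadratic term vanishes) yields the unconditional energy stability estimate. Thus all three conclusions of Theorem \ref{thm:propd} follow.
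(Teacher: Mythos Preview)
Your proposal is correct and follows essentially the same route as the paper: both reduce to a $d$-dimensional version of Theorem \ref{theorem} via the optimization formulation \eqref{eq:Jmd}, prove an analog of Lemma \ref{lm5}, run the same three boundary subcases with the same two-point test directions $\nu$, and then deduce positivity and energy stability exactly as in Theorem \ref{thm:prop}. The only visible difference is in the inverse inequality: you state it with the dimensionally standard exponent $h^{-d/2}$, whereas the paper's Lemma \ref{lm6} keeps $h^{-1/2}$; this discrepancy is immaterial to the argument since it only affects the numerical value of $\delta_0$, not the logic that the directional derivative can be made negative.
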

 The proof of this result is similar, and therefore deferred to Appendix A. 


\section{Numerical Examples}\label{sec:ex}

We numerically validate our theoretical findings using numerical examples in both one and two dimensions.
\subsection{One dimension} We consider the numerical example on the unit torus $\mathbb{T}=[0,1]$ and take the initial condition similar as in \cite{boudin2012mathematical} as
\begin{align*}
	&\rho_{10}(x) = \left\{\begin{array}{ccl}
		&0.8, &\text{ for } 0\le x<0.25 \\
		&1.6(0.75-x), &\text{ for } 0.25 \le x < 0.5,\\
		&1.6(x-0.25), &\text{ for } 0.5 \le x < 0.75,\\
		&0.8, &\text{ for } 0.75 \le x < 1,
	\end{array}\right. \\
	&\rho_{20}(x) = 1\times 10^{-4}, \\
	&\rho_{30}(x) = 1- \rho_{10}(x)-\rho_{20}(x).
\end{align*}
We take the parameter $(b_{ij})_{n\times n}$ in the model to be
\begin{align*}
	b_{12} =b_{13}= \frac{1}{0.833}, b_{23} = \frac{1}{0.168}.
\end{align*}
The mesh size is taken to be $h=0.01$ and time step $\Delta t=0.001$. 

Here we calculate for $500$ time steps and the solutions reach equilibrium. The solution over time and the solution at $x=0.5$ are plotted in Figure \ref{fig:aa}.
\begin{figure}
		\includegraphics[width=0.48\textwidth]{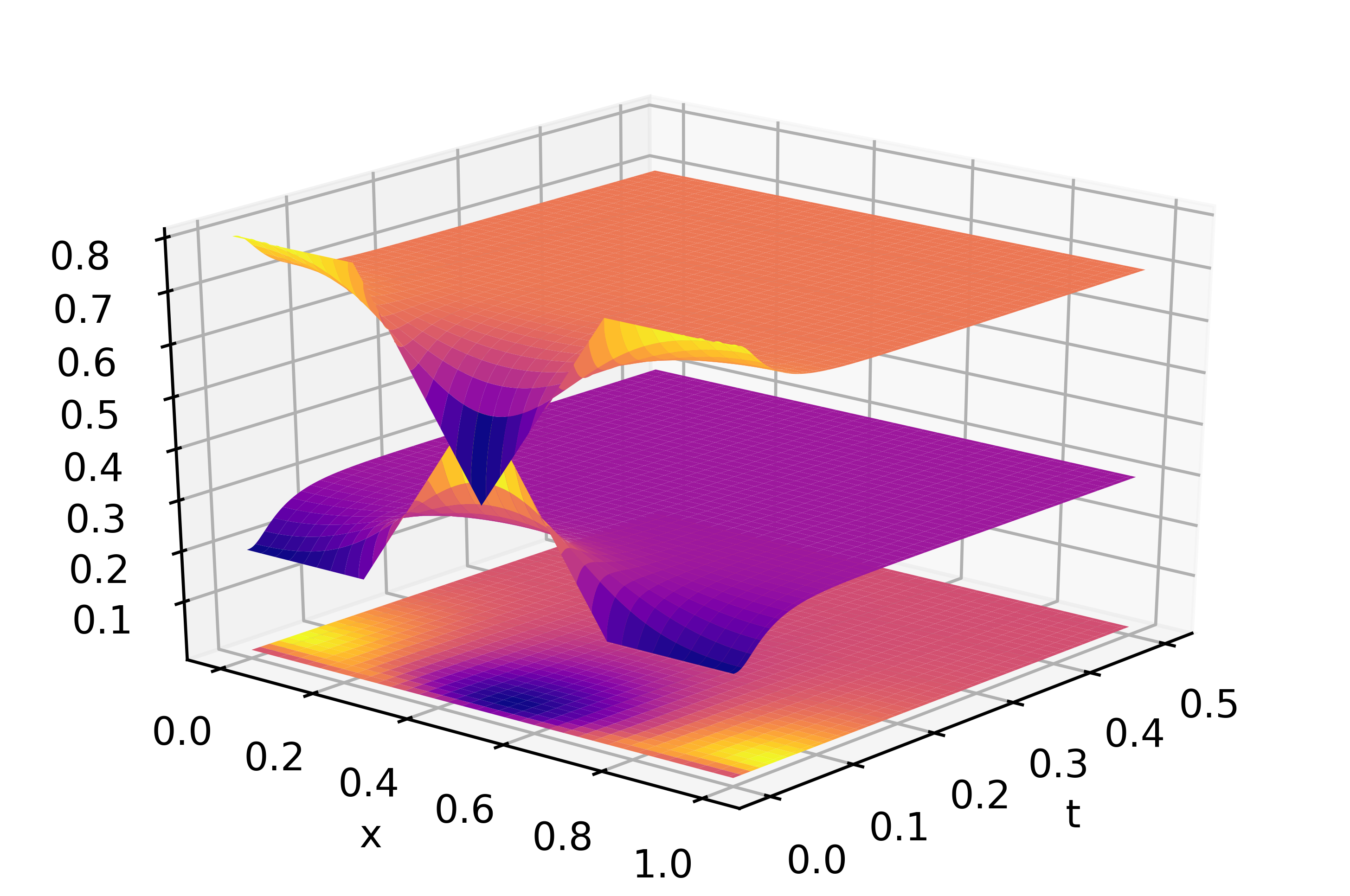}
	 \hfill
		\includegraphics[width=0.48\textwidth]{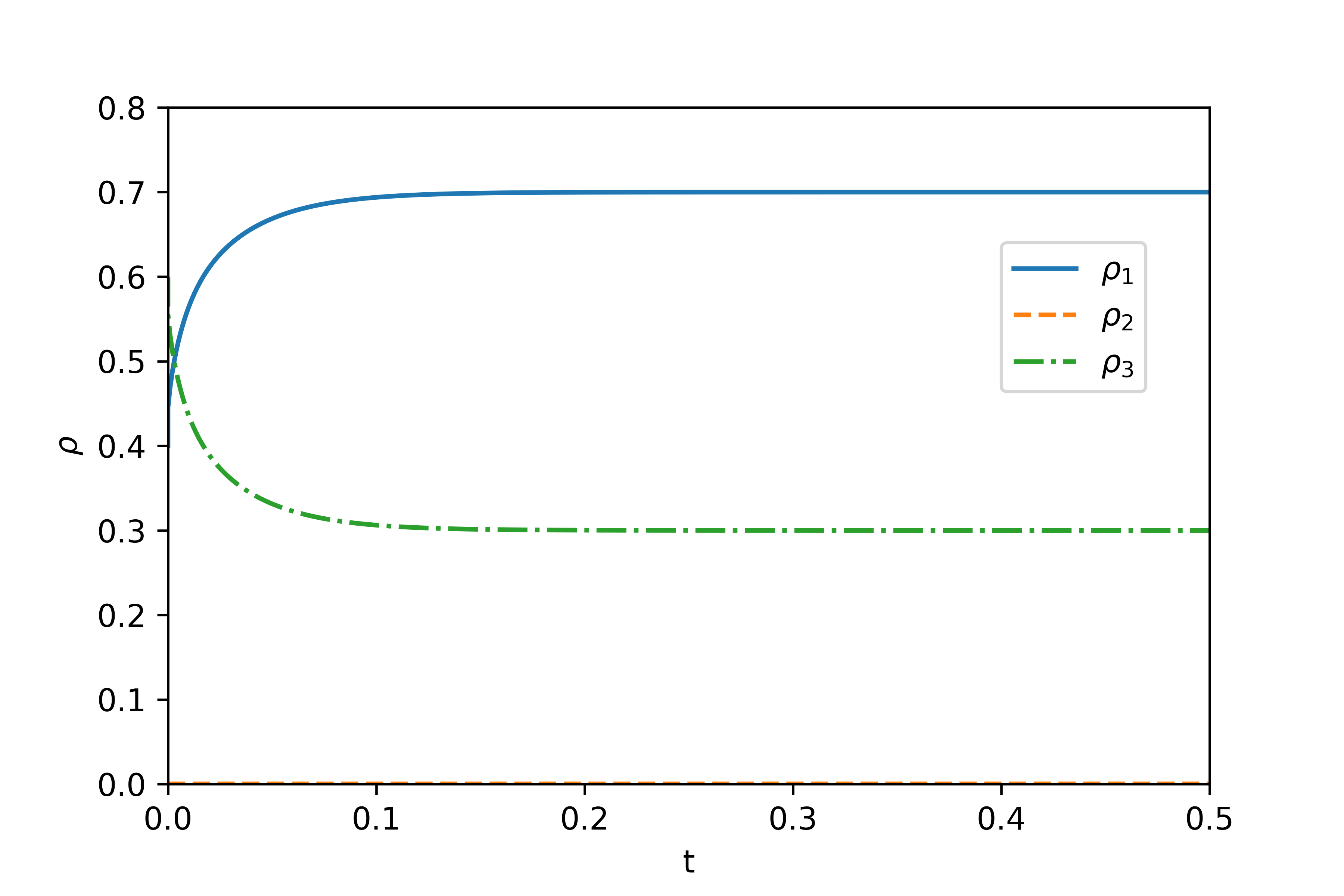}
	\caption{Result (left) and the solution (right) at $x=0.5$}
		\label{fig:aa}
\end{figure}
In our numerical test we observe that the variations of the mass defined in  Lemma \ref{lm1} and Lemma \ref{lm2} are of size $10^{-12} \sim 10^{-11}$,  which confirms the mass conservation results.  
The energy function $F_h(\rho)$ and the minimum value of $\rho$ are plotted in Figure \ref{fg:a1e}. 
\begin{figure}
	\includegraphics[width=0.48\textwidth]{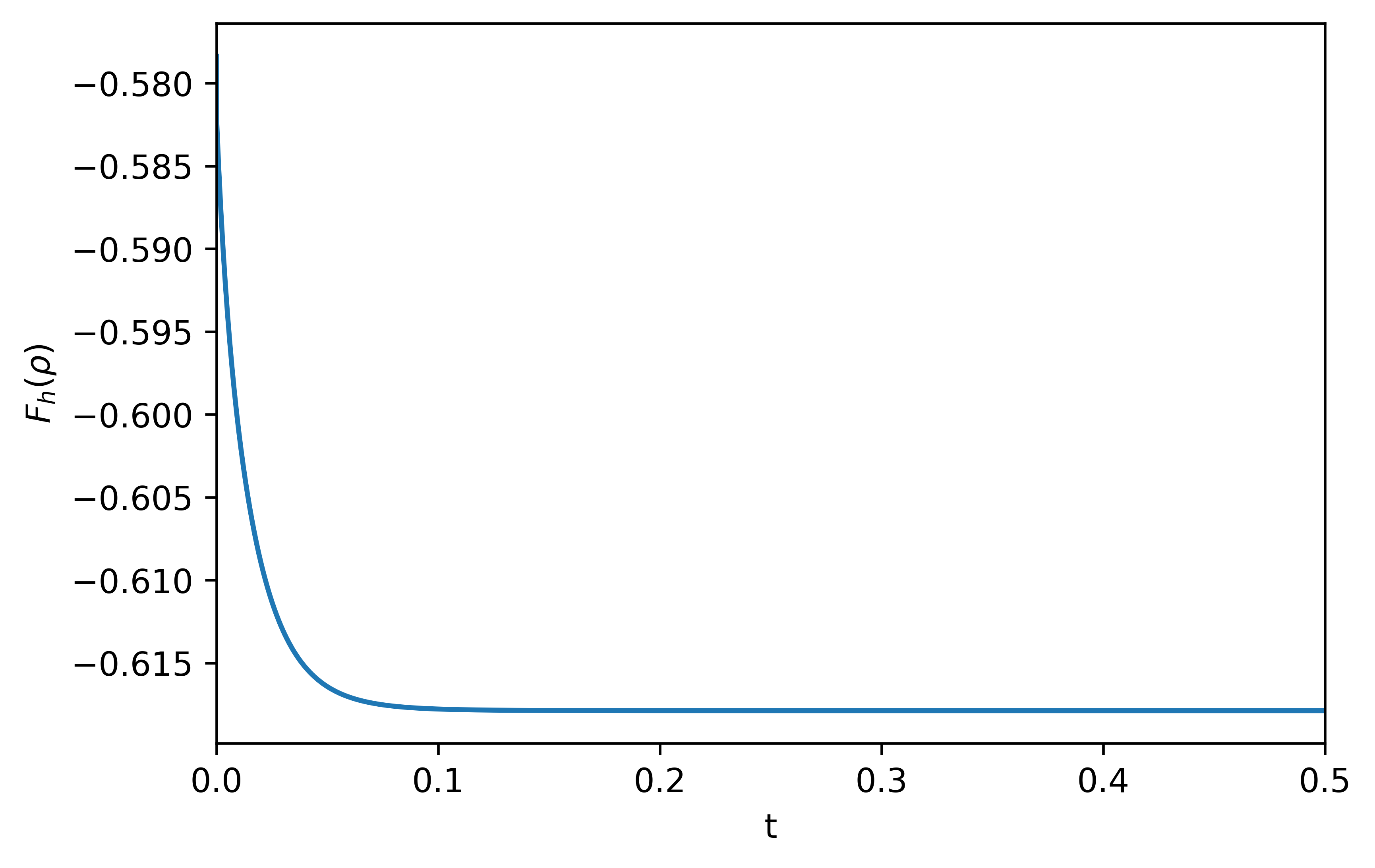}
	\hfill
	\includegraphics[width=0.48\textwidth]{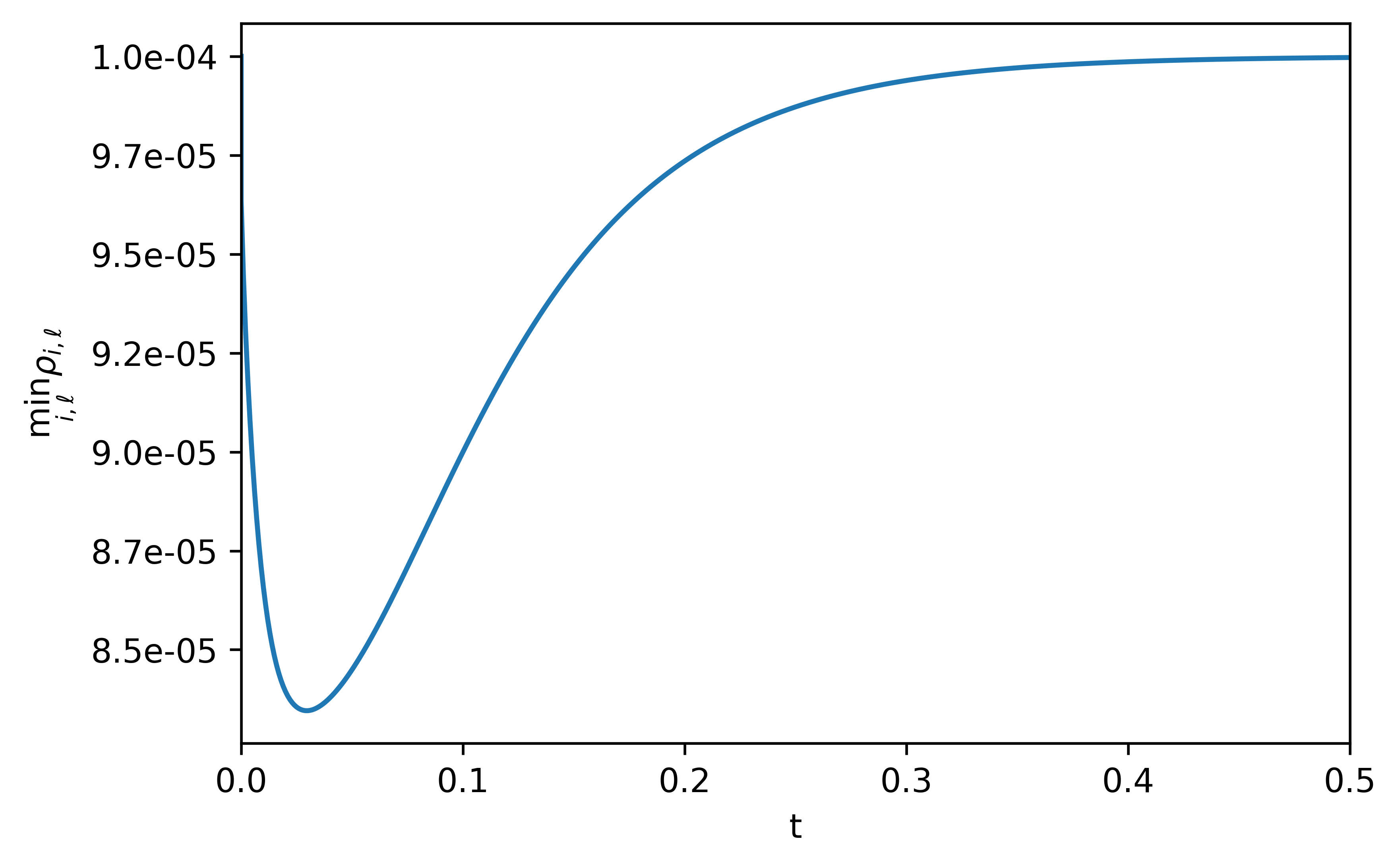}
	\caption{Energy (left) and Minimum value (right)}
	\label{fg:a1e}
\end{figure}
Theorem \ref{thm:prop} is verified.
We fix $\Delta t=0.01$ and calculate from $h=0.01$ to $h=0.2$ with $8$ values in equally distributed logrithmically. We plot the numerical error at $t=0.5$ with respect to the real solution $\rho_0(x)=(0.7,0.0001,0.299)$ in Figure \ref{fg:a1err}. 
\begin{figure}
	\includegraphics[width=0.48\textwidth]{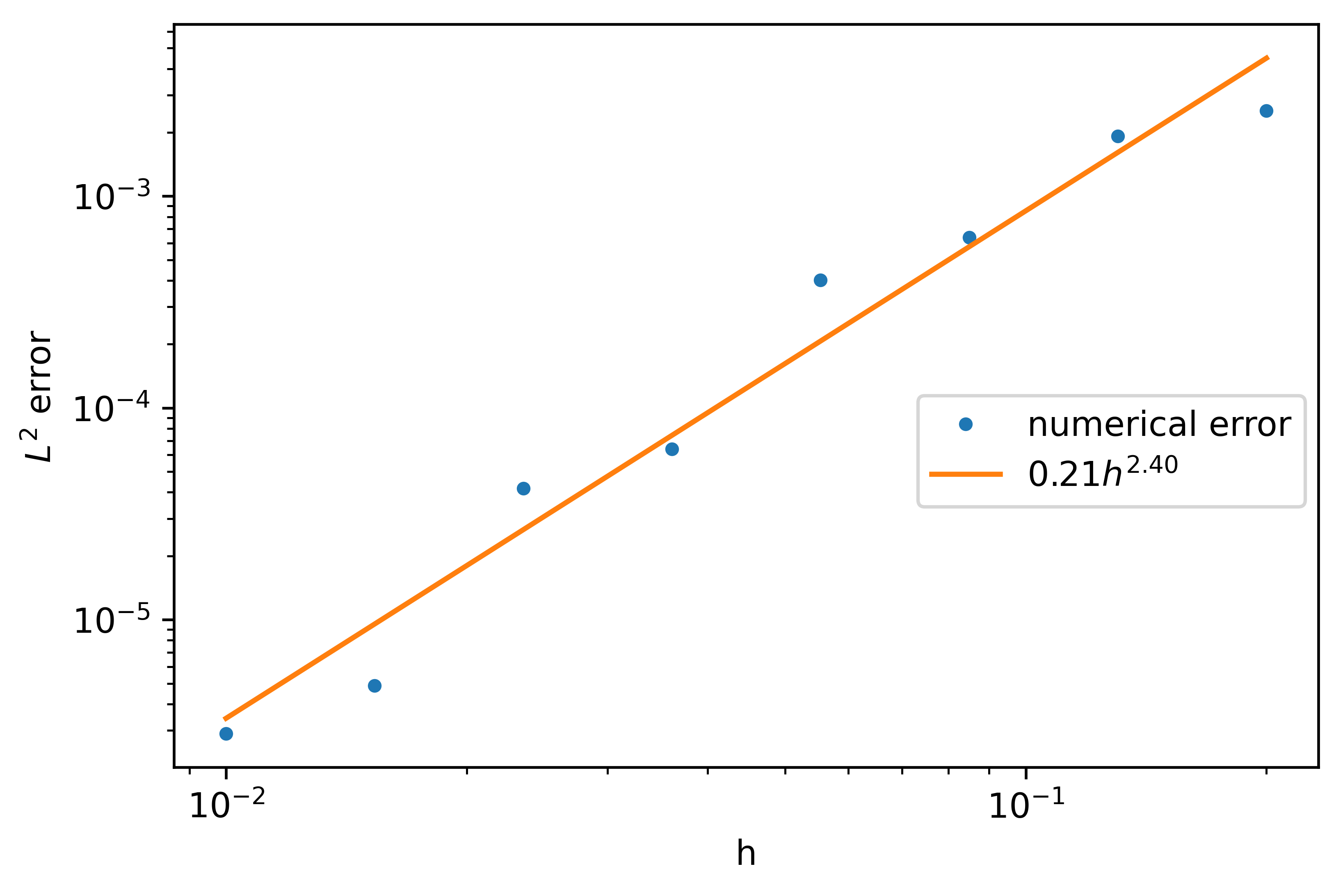} 
	\hfill
	\includegraphics[width=0.48\textwidth]{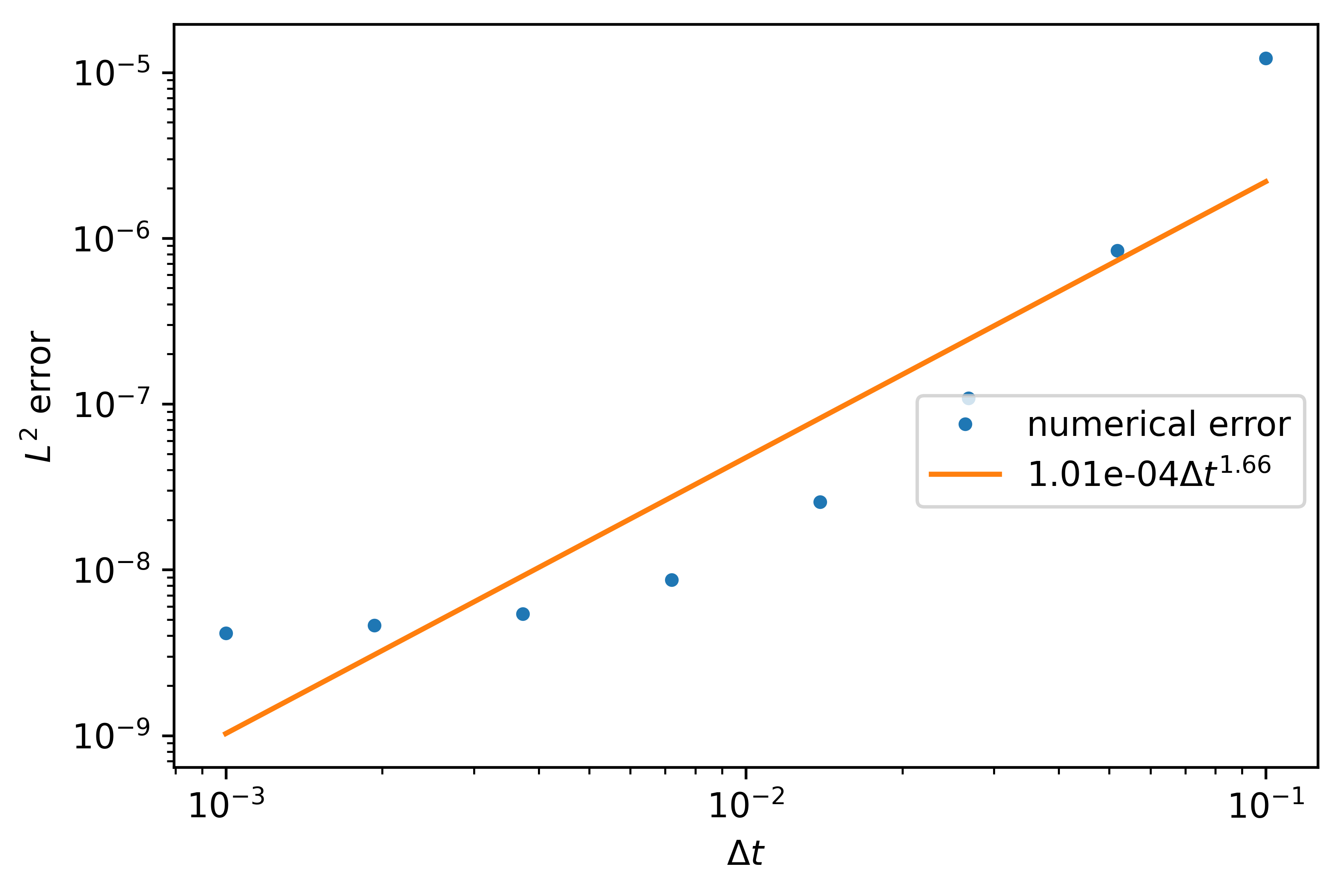} 
	\caption{Numerical errors}
	\label{fg:a1err}
\end{figure}
The fitted curve showed that the scheme is approximately of order $h^2$.
We also keep $h=0.01$ fixed and compute the numerical error with $\Delta t$ ranging from  $0.001$ to $0.1$. The result is plotted in Figure \ref{fg:a1err}. We see that the numerical error is approximately linear in $\Delta t$.


\subsection{Two dimensions}
We take 
\begin{align*}
	&\rho_{10}(x,y) = \left\{\begin{array}{ccl}
	&\frac{\sqrt{(x-\frac12)^2+(y-\frac12)^2}}{2}+ \frac{1}{10}, &\text{ for }\sqrt{(x-\frac12)^2+(y-\frac12)^2 }\le \frac{1}{8},\\
	&\frac{3}{5}, &\text{ otherwise},
	\end{array}\right. \\
	&\rho_{20}(x,y) = 1\times 10^{-4}, \\
	&\rho_{30}(x,y) = 1- \rho_{10}(x,y)-\rho_{20} (x,y).
\end{align*}
The mesh size is taken to be $h=0.05$ and time step $\Delta t=0.001$. We calculate for $500$ time steps. 
 The energy and minimum values are shown in Figure \ref{fg:a22}. We can see that the energy is decaying and the minimum values are all positive.
\begin{figure}
	\includegraphics[width=0.48\textwidth]{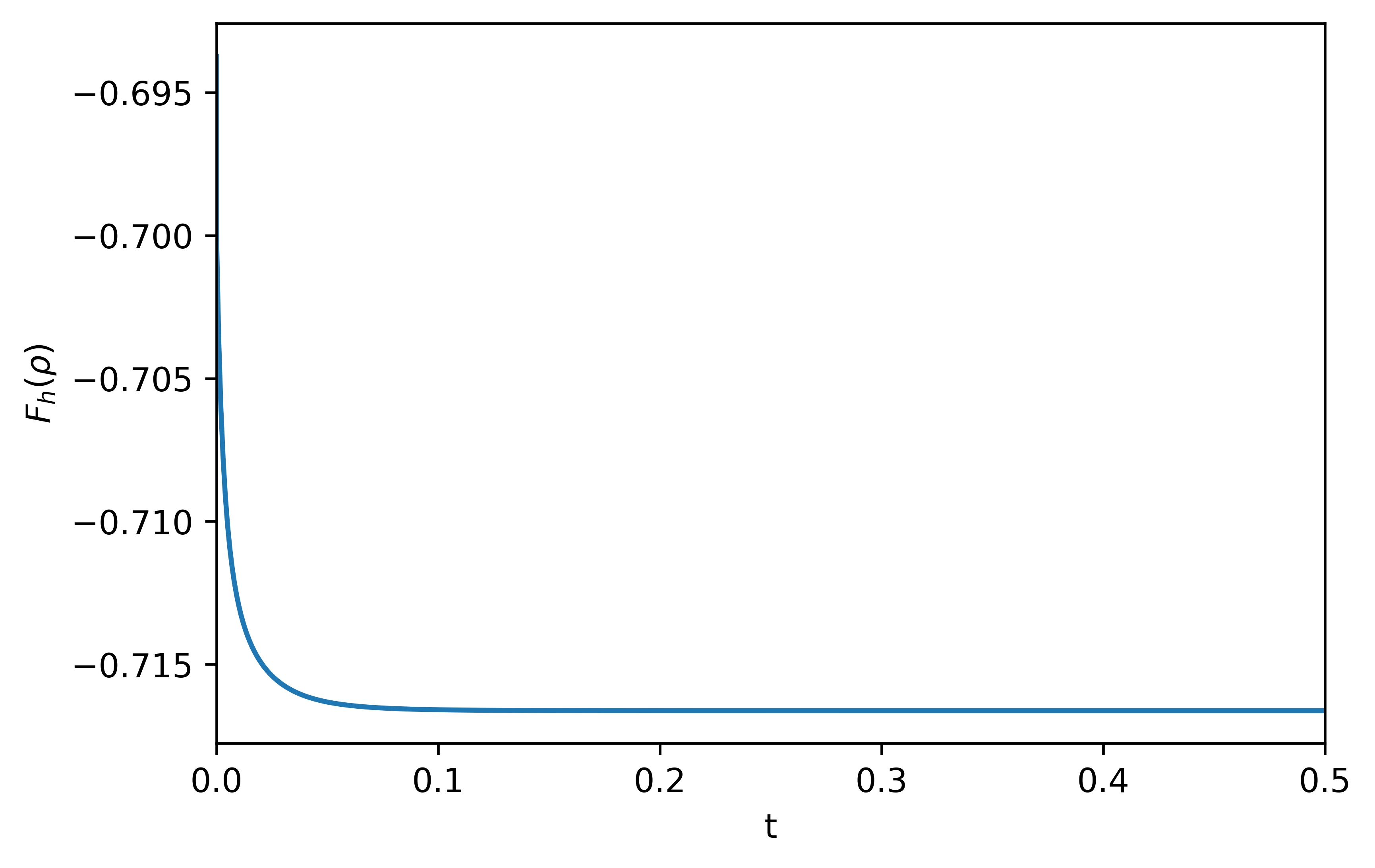}
	\hfill
	\includegraphics[width=0.48\textwidth]{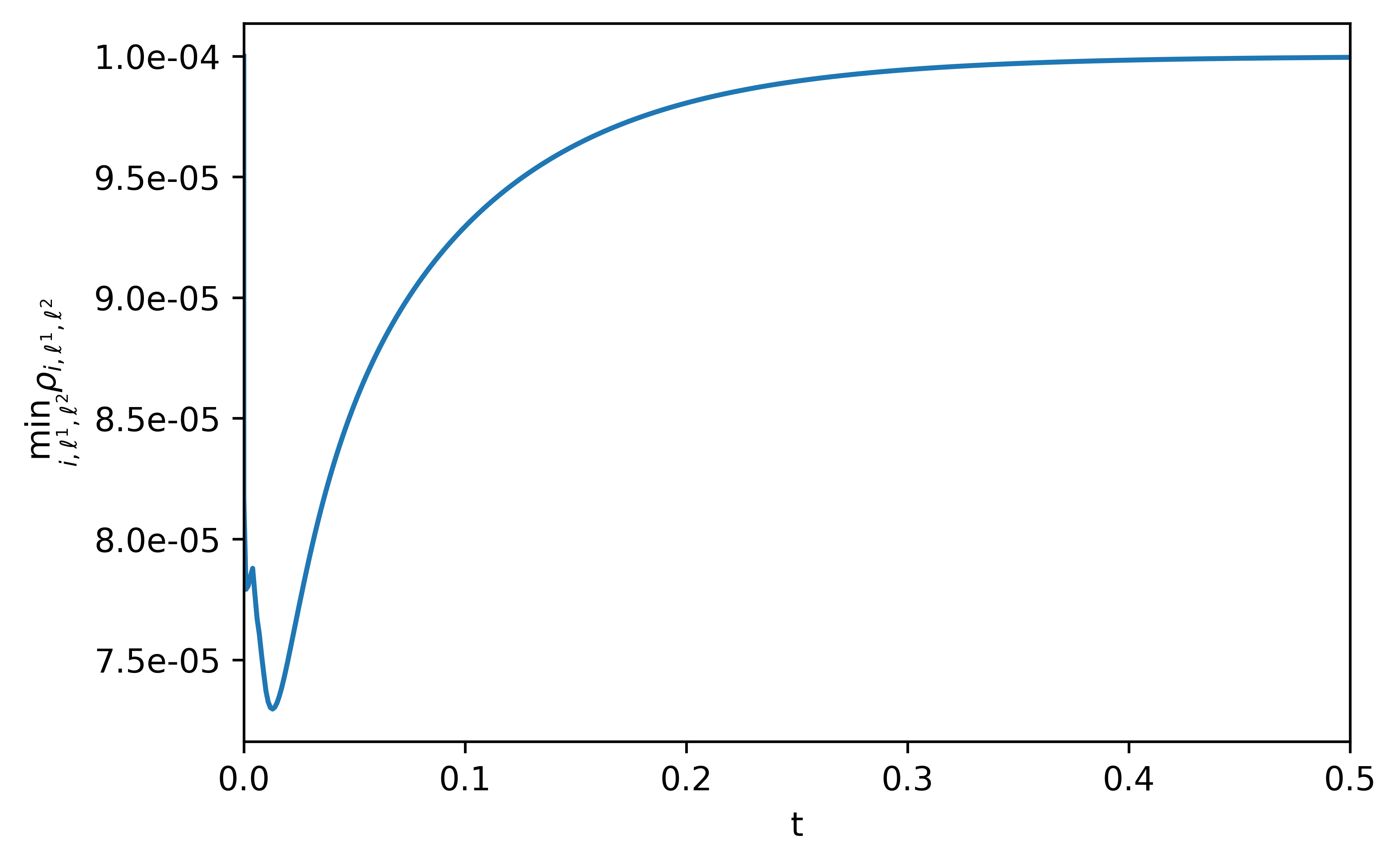}
	\caption{Energy (left) and Minimum value (right)}
	\label{fg:a22}
\end{figure}



\bigskip
        \appendix 
        
  \section{Proof Theorem \ref{thm:propd}}
  To prove Theorem \ref{thm:propd}, we need first to prove a multidimensional version of Theorem \ref{theorem}.
\begin{theorem}\label{eq:thmd}
	Assume $b_{ij}>0$  and $b_{ij}=b_{ji}$ for $i\neq j$ and $i,j=1,\ldots,n$. Assume $\rho^k \in (\C_{\rm per}^d)^n$ be positive. Then there exists a constant $\delta_0>0$, such that $\rho^{k+1}>0$ is a solution of the numerical scheme \eqref{eq:scd1}-\eqref{eq:scd3} if and only if it is a minimizer of the optimization problem:
	\begin{align}\label{eq:opd}
	&\rho^{k+1} = \argmin_{(\rho,w) \in K_\delta} \left\{J= \frac{1}{4 \Delta t}  \left[\sum_{i,j=1}^n b_{ij}\hat\rho_i^k\hat\rho_j^k(w_i-w_j)^2 \right] + F_h(\rho)\right\},
\end{align}	
where
\begin{align*}
	K_\delta=\bigg\{(\rho,w):~ &\rho \in (\C_{\rm per}^d)^n,~w \in (\E_{\rm per}^d)^n; ~ \rho_{i,\ell} \ge \delta,~~
	{\rho_{i,\ell}-\rho_{i,\ell}^k} + d_h(\hat\rho_i^k w_i)_{\ell}=0,\\
	&\sum_{i=1}^n \hat{\rho}_{i,\ell_1,\ldots,\ell_s+\frac12,\ldots,\ell_d}^k w_{i,\ell_1,\ldots,\ell_s+\frac12,\ldots,\ell_d}=0 \text{ and } 
	\sum_{i=1}^n \rho_{i,\ell} =1,\\
	&~\forall i =1,\ldots,n,~ \forall \ell =(\ell_1,\ldots,\ell_d) \in\{1,\ldots,N\}^d,s=1,\ldots,d\bigg\},
\end{align*}
for any $0<\delta\le\delta_0$.
\end{theorem}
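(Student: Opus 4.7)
The plan is to follow the three-step strategy used for Theorem \ref{theorem} in the one-dimensional case, adapting each step to the multidimensional operators $D_h$, $d_h$ and the multi-index inner products defined at the beginning of Section \ref{sec:mu}. First, I would reformulate \eqref{eq:opd} in terms of the reduced variables $\tilde{\rho}=(\rho_1,\ldots,\rho_{n-1})$ and $\tilde{W}=(\hat\rho_1^k w_1,\ldots,\hat\rho_{n-1}^k w_{n-1})$, observing that $\sum_{i=1}^n \hat\rho_i^k w_i=0$ lets one eliminate $W_n$. The algebraic computation in Lemma \ref{lem:st6} is purely pointwise and carries over verbatim, yielding
\begin{equation*}
\tfrac{1}{2}\sum_{i,j=1}^n b_{ij}\hat\rho_i^k\hat\rho_j^k(w_i-w_j)^2 = \tilde W^T (\hat Q^k)^T \hat B^k \hat Q^k \tilde W,
\end{equation*}
and Lemma \ref{lem:st7} is likewise pointwise, so $F_h$ is still a convex function of $\tilde\rho$. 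Standard convex optimization then gives existence and uniqueness of a minimizer for every $\delta>0$, since the constraint set $\tilde K_\delta$ is affine in $\tilde W$ and convex and bounded in $\tilde\rho$.

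Second, I would reformulate the reduced problem as minimizing
\begin{equation*}
J(\tilde\rho)=\tfrac{1}{2\Delta t}\|\tilde\rho-\tilde\rho^k\|_{\LL_{\hat D^k}^{-1}}^2+F_h(\tilde\rho)
\end{equation*}
over the set $\mathring{\tilde K}_\delta$, using the multidimensional norm defined in \eqref{eq:ginv} and the multidimensional analogue of the identity \eqref{l2theory}, which relies on the summation-by-parts formula for $(\C^d_{\rm per})^n$ and $(\E^d_{\rm per})^n$. To show that the minimizer $\tilde\rho^\star$ stays strictly interior, I would repeat the boundary-exclusion argument verbatim: the three cases (\ref{en:1}-\ref{en:a}), (\ref{en:1}-\ref{en:b}) and (\ref{en:2}) are treated by choosing a sign-pattern variation $\nu$ supported on two or four grid points and showing
\begin{equation*}
\left.\tfrac{1}{h^d}\tfrac{d}{ds} J(\tilde\rho^\star+s\nu)\right|_{s=0} < 0
\end{equation*}
for $\delta$ smaller than an explicit threshold that depends on $\Delta t$, $h$, $\lambda_{\min}^k$, $\rho_{\min}^k$ and $n$. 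The combinatorial choice of extremal indices (the component attaining $\max_i$ at a grid point, the grid point attaining $\max_\ell$ of a given component) is dimension-insensitive, and the bounds \eqref{eq:cal1}-\eqref{eq:as3} use only counting arguments that translate to $N^d$ grid points.

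The main obstacle, and the only step that is not completely mechanical, is obtaining the multidimensional analogue of Lemma \ref{lm5}, that is, the $L^\infty$ bound on $\LL_\Phi^{-1}\phi$. Its proof rests on three ingredients: a discrete Poincaré inequality on $(\mathring{\C}_{\rm per}^d)^{n-1}$, an inverse inequality of the form $\|f\|_{L^\infty}\le C h^{-d/2}\|D_h f\|_{L^2}$, and positive-definiteness of $\Phi$ yielding a lower bound via $\lambda_{\min}$. The Poincaré and inverse inequalities are standard for periodic finite-difference grids on $\mathbb{T}^d$, but the inverse-inequality exponent becomes $h^{-d/2}$ instead of $h^{-1/2}$, which propagates into the thresholds \eqref{eq:delta}, \eqref{eq:delta2}, \eqref{eq:delta0} by replacing $h^{-1/2}$ with $h^{-d/2}$. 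Once this bound is in hand, the constants $\delta_0$ for each case are analogous, and taking their minimum gives the $\delta_0$ in the statement.

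Third, writing the Euler-Lagrange equation $\langle\partial_{\tilde\rho} J(\tilde\rho^\star),\nu\rangle=0$ for all $\nu\in(\mathring{\C}_{\rm per}^d)^{n-1}$ yields
\begin{equation*}
\tfrac{\rho_i^\star-\rho_i^k}{\Delta t}=\sum_{j=1}^{n-1} d_h\!\left(\hat D_{ij}^k\,D_h(\log\rho_j^\star-\log\rho_n^\star)\right),\quad i=1,\ldots,n-1,
\end{equation*}
and the multidimensional analogue of Lemma \ref{lm4} (whose proof is unchanged, since it is pointwise in the grid index) then identifies this as precisely the scheme \eqref{eq:scd1}-\eqref{eq:scd3}. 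The converse direction is obtained by reversing the computation, using uniqueness of the interior minimizer from Step 1 to conclude.
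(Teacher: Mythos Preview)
Your proposal is correct and follows essentially the same three-step approach as the paper: reduce to $(\tilde\rho,\tilde W)$ using the pointwise Lemmas \ref{lem:st6} and \ref{lem:st7}, rewrite as the minimization of $J(\tilde\rho)=\tfrac{1}{2\Delta t}\|\tilde\rho-\tilde\rho^k\|_{\LL_{\hat D^k}^{-1}}^2+F_h(\tilde\rho)$, run the boundary-exclusion argument in the three cases with the same sign-pattern variations, and then match the Euler--Lagrange equation to the scheme via the multidimensional analogue of Lemma \ref{lm4}.

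One small discrepancy worth flagging: the paper's multidimensional $L^\infty$ bound (its Lemma \ref{lm6}) is stated with the same factor $h^{-1/2}$ as in one dimension, whereas you write $h^{-d/2}$ coming from the inverse inequality $\|f\|_{L^\infty}\le C h^{-d/2}\|f\|_{L^2}$. Your exponent is the one dictated by the standard discrete inverse inequality on $\mathbb{T}^d$; the paper appears either to absorb the dimension into the unspecified constant $C$ or to have carried over the $1$D exponent without adjustment. Either way this does not affect the argument, since the role of this estimate is only to furnish \emph{some} finite bound (depending on $h$, $\Delta t$, $n$, $\lambda_{\min}^k$) that can be absorbed into the explicit threshold $\delta_0$.
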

The proof follows a similar strategy as the proof of Theorem \ref{theorem} for the one dimensional case. We establish a multidimensional version of Lemma \ref{lm5}.

	\begin{lemma}\label{lm6}
	Suppose $\Phi$ is a $(n-1)\times(n-1)$ symmetric positive definite matirx, with $\Phi_{ij} \in \E^d_{\rm per}$. Suppose $\phi \in (\mathring{\C}^d)^{n-1}_{\rm per}$ 
	satisfies $\|\phi\|_{L^\infty} \le M$,
	\[\|\phi\|_{L^\infty}:= \max_{\substack{i=1,\ldots,n-1\\\ell_s=1,\ldots,N\\s=1,\ldots,d}} |\phi_{i,\ell_1,\ldots,\ell_d}|.\]
	The following estimate holds
	\begin{align*}
		\|\mathcal{L}_{\Phi}^{-1} \phi\|_{L^\infty} \le \frac{CM}{\lambda_{\min}} h^{-\frac{1}{2}}(n-1)^{\frac{1}{2}},
	\end{align*}
	where $C>0$ depends only on the domain, $\lambda_{min}$  is the minimum  of the eigenvalues of $\Phi$ over all grid points:
	\[
	\lambda_{\min} = \min_{\substack{\ell_s=1,\ldots,N\\s=1,\ldots,d}} \left\{\lambda_{\ell_1,\ldots,\ell_s+\frac12,\ldots,\ell^d} \text{ the eigenvalue of } 
	 (\Phi_{ij,\ell_1,\ldots,\ell_s+\frac12,\ldots,\ell_d})_{(n-1)\times(n-1)}\right\}
	 \]
\end{lemma}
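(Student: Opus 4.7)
The plan is to mimic the proof of Lemma \ref{lm5} step by step, replacing one-dimensional notions with the $d$-dimensional analogues developed at the start of Section \ref{sec:mu}. First I would convert the pointwise bound on $\phi$ into an $L^{2}$ bound via volume counting:
\[
\|\phi\|_{L^{2}}^{2} = h^{d}\!\!\!\sum_{\substack{i=1,\ldots,n-1 \\ \ell\in\{1,\ldots,N\}^{d}}}\!\!|\phi_{i,\ell}|^{2} \le (n-1)(hN)^{d}M^{2} = (n-1)L^{d}M^{2}.
\]
This replaces the factor $(n-1)L M^{2}$ of the 1D proof and contributes only a fixed geometric constant depending on the torus size.

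Next I would set $g=\phi$ and let $f=\mathcal{L}_{\Phi}^{-1}g\in(\mathring{\C}_{\rm per}^{d})^{n-1}$. Using the multidimensional summation-by-parts formula $\langle f, d_{h}\psi\rangle=-[D_{h}f,\psi]$ that was recorded in Section \ref{sec:mu}, together with definition \eqref{eq:ginv}, I obtain
\[
[D_{h}f,\Phi D_{h}f] \;=\; \langle f,-d_{h}(\Phi D_{h}f)\rangle \;=\; \langle f,\mathcal{L}_{\Phi}f\rangle \;=\; \langle f,\phi\rangle \;\le\; \|f\|_{L^{2}}\|\phi\|_{L^{2}}.
\]
Since $\Phi$ is symmetric positive definite at every edge point with smallest eigenvalue $\lambda_{\min}>0$, the left side dominates $\lambda_{\min}\|D_{h}f\|_{L^{2}}^{2}$, yielding the $H^{1}$-type estimate $\lambda_{\min}\|D_{h}f\|_{L^{2}}^{2}\le\|f\|_{L^{2}}\|\phi\|_{L^{2}}$.

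Finally I would close the argument with two standard discrete inequalities on the periodic $d$-dimensional grid: the discrete Poincar\'e inequality $\|f\|_{L^{2}}\le C_{P}\|D_{h}f\|_{L^{2}}$ (which applies because each component of $f$ has zero mean) and a discrete Sobolev/inverse inequality relating $\|f\|_{L^{\infty}}$ to $\|D_{h}f\|_{L^{2}}$. Chaining them gives
\[
\|f\|_{L^{\infty}} \;\le\; C_{1}h^{-1/2}\|D_{h}f\|_{L^{2}} \;\le\; \frac{C_{1}C_{P}}{\lambda_{\min}}h^{-1/2}\|\phi\|_{L^{2}} \;\le\; \frac{C M}{\lambda_{\min}}h^{-1/2}(n-1)^{1/2},
\]
after absorbing $L^{d/2}$ into $C$. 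The technical step I expect to be most delicate is the last inverse/Sobolev inequality: in $d\ge2$ the embedding $H^{1}\hookrightarrow L^{\infty}$ fails, so one must either insert an additional factor $h^{-(d-1)/2}$ from a genuine inverse estimate $\|f\|_{L^{\infty}}\le Ch^{-d/2}\|f\|_{L^{2}}$ (in which case $\delta_{0}$ in Theorem \ref{eq:thmd} must be shrunk correspondingly), or exploit an interpolation between $\|f\|_{L^{2}}$ and $\|D_{h}f\|_{L^{2}}$ specific to the staggered periodic grid. All other ingredients—volume count, summation-by-parts, coercivity of $\Phi$, and discrete Poincar\'e—transfer verbatim from the one-dimensional argument.
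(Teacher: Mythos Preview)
Your proposal is correct and follows essentially the same route as the paper: the $L^\infty\to L^2$ volume count, the summation-by-parts identity giving $\lambda_{\min}\|D_h f\|_{L^2}^2\le\|f\|_{L^2}\|\phi\|_{L^2}$, the discrete Poincar\'e inequality, and a final inverse-type inequality $\|f\|_{L^\infty}\le C_1 h^{-1/2}\|D_h f\|_{L^2}$ are exactly the steps the paper uses. Your flag about the $h^{-1/2}$ exponent in $d\ge 2$ is a legitimate observation---the paper simply asserts this inverse inequality without further justification---but it does not represent a divergence between your argument and the paper's.
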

\begin{proof}
	\begin{align*}
	    \|\phi\|_{L^2}^2: =& h^d \sum_{\substack{i=1,\ldots,n-1\\\ell^s=1,\ldots,N\\s=1,\ldots,d}} |\phi_{i,\ell_1,\ldots,\ell_d}|^2 \\\le& h^d \sum_{\substack{i=1,\ldots,n-1\\\ell_s=1,\ldots,N\\s=1,\ldots,d}} |M|^2 \le (n-1)h^dN^d|M|^2 = (n-1)L^d|M|^2.
	\end{align*}
	Let $g = \phi$ and $f=\mathcal{L}_\Phi^{-1} g$ in \eqref{eq:ginv}, the norm 
	satisfies
	\begin{align*}
		&\lambda_{\min} \|D_h {f}\|_{L^2}^2 \le [D_h {f}, \Phi D_h f] \\
		&\quad= -\langle {f}, d_h(\Phi D_h {f}) \rangle = -\langle {f}, \phi\rangle \le \|{f}\|_{L^2} \|\phi\|_{L^2} \le C_P \|{f}\|_{L^2}\|\phi\|_{L^2},
	\end{align*}
	according to the discrete Poincar\'e inequality.
	Therefore, we get
	\begin{align*}
		\|D_h {f}\|_{L^2} \le \frac{C_P}{\lambda_{\min}} \|\phi\|_{L^2}.
	\end{align*}
	Using an inverse inequality in $(\mathring{\C}^d)_{\rm per}^{n-1}$ leads to 
	\begin{align*}
		\|{f}\|_{L^\infty} \le C_1 h^{-\frac{1}{2}} \|D_h {f}\|_{L^2} \le \frac{C_1C_P}{\lambda_{\min}} h^{-\frac{1}{2}} L^{\frac{d}{2}}M(n-1)^{\frac{1}{2}} \le \frac{CM}{\lambda_{\min}}h^{-\frac{1}{2}}(n-1)^\frac12.
	\end{align*}
	\end{proof}
Now we prove Theorem \ref{eq:thmd}.
\begin{proof}
	In a fashion similar to the proof of the one dimensional case, there exists a unique solution to the optimization problem \eqref{eq:opd} for any $\delta>0$. This follows from the same argument with notations replaced by the multidimensional version. To prove that the minimizer of \eqref{eq:opd} does not touch the boundary of $K_\delta$, we use the equivalent optimization problem 
	\begin{align}\label{eq:Jmd}
		\min_{\tilde{\rho}\in \mathring{\tilde K}_\delta} \left\{J =  \frac{1}{2 \Delta t} \|\tilde{\rho}-\tilde{\rho}^k\|_{\mathcal{L}^{-1}_{\hat D^k}}^2 +  F_h(\tilde \rho)\right\},
	\end{align}
	over the set
	\begin{align*}
	    \mathring{\tilde K}_\delta = \bigg\{\tilde{\rho}:&~\tilde\rho - \tilde{\rho}^k \in (\mathring{\mathcal{C}}_{\rm per}^d)^{n-1};~  \rho_{i,\ell} \ge \delta ,~
	     \sum_{i=1}^{n-1} \rho_{i,\ell} \le 1-\delta,\\
	     &~\forall i =1,\ldots,n-1,~~\ell\in\{1,\ldots,N\}^d\bigg\}.
	\end{align*}
	Assume the minimizer touches the boundary of $\mathring{\tilde K}_\delta$ at the grid point $\ell^0=({\ell^0_{1},\ldots,\ell^0_{d}})$ for the $i_0$-th component, i.e.
	\begin{align}\label{eq:multdelta}
		\rho^\star_{i_0,\ell^0_{1},\ldots,\ell^0_{d}}=\delta.
	\end{align}

Next we consider the following two cases:
	 \begin{enumerate}[(a)]
	     \item \label{en:aa} $$\sum_{i=1}^{n-1} \rho^\star_{i,\ell^0} \ge \frac{1}{2},$$
	     \item \label{en:bb} $$\sum_{i=1}^{n-1} \rho^\star_{i,\ell^0} < \frac{1}{2}. $$
	 \end{enumerate}

	First consider the case \eqref{en:aa}. We also suppose $\{\rho^\star_{i,\ell_1^0,\ldots,\ell_d^0}\}_{i=1}^{n-1} 
	$ achieves its maximum at the $i_1$-th component, and $\{\rho^\star_{i_0,\ell}\}_{\ell\in\{1,\ldots,N\}^d}$ achieves its maximum at $\ell=\ell^1 = (\ell^1_1,\ldots,\ell^1_d)$.
	We calculate the directional derivative of the objective function \eqref{eq:Jmd} along the direction 	 
	\begin{align*}
	 	\nu_{i,\ell_1,\ldots,\ell_d}=\left\{\begin{array}{cl}
	 		1, &\text{for }i=i_0,\; \ell_s=\ell_s^0,~\forall s=1,\ldots,d,\\
	 		-1,&\text{for }i=i_1,\; \ell_s=\ell_s^0,~\forall s=1,\ldots,d,\\
	 		-1,&\text{for }i=i_0,\; \ell_s=\ell_s^1,~~\forall s=1,\ldots,d, \\
	 		1, &\text{for }i=i_1,\; \ell_s=\ell_s^1,~~\forall s=1,\ldots,d,\\
	 		0, &\text{otherwise},
	 	\end{array}\right.
	 \end{align*}
	 and we get	
	  	\begin{align}
	 	&\left. \frac{1}{h^d}\frac{d}{ds} J(\tilde\rho^\star + s\nu) \right|_{s=0} \nonumber\\
	 	&\quad = \frac{1}{\Delta t} (\mathcal{L}^{-1}_{\hat{D}^k}(\tilde\rho^\star-\tilde\rho^k))_{i_0,\ell^0} - \frac{1}{\Delta t} (\mathcal{L}^{-1}_{\hat{D}^k}(\tilde\rho^\star-\tilde\rho^k))_{i_1,\ell^0} - \frac{1}{\Delta t} (\mathcal{L}^{-1}_{\hat{D}^k}(\tilde\rho^\star-\tilde\rho^k))_{i_0,\ell^1} \nonumber\\
	 	&\qquad+ \frac{1}{\Delta t} (\mathcal{L}^{-1}_{\hat{D}^k}(\tilde\rho^\star-\tilde\rho^k))_{i_1,\ell^1}  + \log \rho^\star_{i_0,\ell^0} - \log \rho^\star_{i_1,\ell^0} -\log \rho^\star_{i_0,\ell^1} + \log \rho^\star_{i_1,\ell^1}. \label{eq:Jderivd}
	 \end{align}
	 Since $\rho^\star_{i_1,\ell^0}$ is the maximum point and the assumption \eqref{en:aa} that
	 $\sum_{i=1}^{n-1}\rho^\star_{i,\ell^0} \ge \frac{1}{2}$,
	 \begin{align}\label{eq:c1}
	 	\rho^\star_{i_1,\ell^0} \ge \frac{1}{2(n-1)}.
	 \end{align}
	 Since $\rho^\star_{i_0,\ell^1}$ is the maximum point and 
	 \begin{align*}
	 	\sum_{\ell\in\{1,\ldots,d\}^N} \rho^\star_{i_0,\ell} = \sum_{\ell\in\{1,\ldots,d\}^N} \rho^k_{i_1,\ell},
	 \end{align*}
	 we have
	 \begin{align}\label{eq:c2}
	 	\rho^\star_{i_0,\ell^1} \ge \frac{m}{h^dN^d},
	 \end{align}
	 where $m$ is set to be
	 \begin{align*}
	 	m=\min_{\{i=1,\ldots,n-1\}}\left\{h^d\sum_{\ell\in\{1,\ldots,N\}^d} \rho^k_{i,\ell}\right\}.
	 \end{align*}
	 In order to guarantee $\tilde\rho^\star + s\nu \in \mathring{\tilde K}_\delta$, we assume
	 \begin{align*}
	  	\delta \le \frac{m}{2h^dN^d}
	  \end{align*} 
	  so that $\rho^\star_{i_0,\ell^1} -s \ge \frac{G}{h^dN^d} -s \ge \delta$ for small $s$. One can check for other components and get $\tilde\rho^\star+s\nu \in \mathring{\tilde K}_\delta$ for $\delta \le \frac{1}{4(n-1)}$.
	 We also have
	 \begin{align*}
	 	\rho^\star_{i_1,\ell^1} \le 1-\delta < 1.
	 \end{align*}
	 Taking the above inequality and \eqref{eq:multdelta}, \eqref{eq:c1}-\eqref{eq:c2} into \eqref{eq:Jderivd} and applying Lemma \ref{lm6} leads to
	 \begin{align*}
	 	&\left. \frac{1}{h^d}\frac{d}{ds} J(\tilde\rho^\star + s\nu) \right|_{s=0} 
		\\
		&\qquad \qquad \le \frac{8C}{\lambda_{\min}^k\Delta t} h^{-\frac{1}{2}}(n-1)^{\frac12} + \log \delta - \log\frac{1}{2(n-1)} - \log\frac{m}{h^dN^d} +\log 1,
	 \end{align*}
	 where $\lambda^k_{\min}$ is the minimum eigenvalue of $\hat{D}^k$. Taking
	 \begin{align}\label{eq:dchoice1}
	 	\delta_0 \le \min\left\{\frac{m}{4(n-1)h^dN^d} e^{- \frac{8C}{\lambda_{\min}^k\Delta t} h^{-\frac{1}{2}}(n-1)^{\frac12}},\frac{m}{2h^dN^d}, \frac{1}{4(n-1)}\right\}
	 \end{align}
	 leads to 
	 \[\left. \frac{1}{h^d}\frac{d}{ds} J(\tilde\rho^\star + s\nu) \right|_{s=0}\le -\log 2 <0,\]
	 which contradicts to the assumption that $\tilde\rho^\star$ is a minimizer. 

	 Next we consider the case \eqref{en:bb}. We also suppose $\{\rho^\star_{i_0,\ell}\}_{\ell\in\{1,\ldots,N\}^d}$ achieves its maximum at $\ell=\ell^1=(\ell^1_1,\ldots,\ell^1_d)$. We take
	 	\begin{align*}
	 	\nu_{i,\ell_1,\ldots,\ell_d}=\left\{\begin{array}{cl}
	 		1, &\text{for }i=i_0,\; \ell_s=\ell_s^0,~\forall s=1,\ldots,d,\\
	 		-1,&\text{for }i=i_1,\; \ell_s=\ell_s^0,~\forall s=1,\ldots,d,\\
	 		0, &\text{otherwise},
	 	\end{array}\right.
	 \end{align*}
	 and use \eqref{eq:multdelta}, \eqref{en:bb}, \eqref{eq:c2} to get	
	  	\begin{align*}
	 	&\left. \frac{1}{h^d}\frac{d}{ds} J(\tilde\rho^\star + s\nu) \right|_{s=0} \nonumber\\
	 	&\quad = \frac{1}{\Delta t} (\mathcal{L}^{-1}_{\hat{D}^k}(\tilde\rho^\star-\tilde\rho^k))_{i_0,\ell^0}  - \frac{1}{\Delta t} (\mathcal{L}^{-1}_{\hat{D}^k}(\tilde\rho^\star-\tilde\rho^k))_{i_0,\ell^1}  + \log \rho^\star_{i_0,\ell^0}  \\
	 	&\quad \quad - \log \left(1-\sum_{i=1}^{n-1} \rho^\star_{i_0,\ell_1} \right) -\log \rho^\star_{i_0,\ell_1} + \log \left(1-\sum_{i=1}^{n-1} \rho^\star_{i_0,\ell_1}\right) \\
	 	&\quad\le \frac{4C}{\lambda_{\min}^k\Delta t} h^{-\frac{1}{2}}(n-1)^{\frac12} + \log \delta - \log\frac{1}{2} - \log\frac{m}{h^dN^d} +\log 1, 
	 	\end{align*}
	 		 Taking 
	 \begin{align}\label{eq:delta0choice3}
	 	\delta_0 \le \min\left\{\frac{m}{4h^dN^d}e^{-\frac{4C}{\lambda_{\min}^kh\Delta t} h^{-\frac{1}{2}}(n-1)^{\frac12} },\frac{m}{2h^dN^d}\right\}
	 \end{align}
	 leads to 
	 \begin{align*}
	 	\left. \frac{1}{h}\frac{d}{ds} J(\tilde\rho^\star + s\nu) \right|_{s=0} =-\log 2<0,
	 \end{align*}
	 which contradicts to the assumption that $\tilde\rho^\star$ is a minimizer, and so the situation \eqref{en:bb} cannot occur.

	 On the other hand, we suppose $\tilde\rho^\star$ touches the other boundary with
	 \begin{align}\label{eq:bb1}
	 	\sum_{i=1}^{n-1}\rho^\star_{i,\ell^0} = 1-\delta.
	 \end{align}
	 Suppose $\rho^\star_{i,\ell^0}$ achieves its maximum at $i_0$, then
	 \begin{align}\label{eq:bb2}
	 	\rho^\star_{i_0,\ell^0} \ge \frac{1-\delta}{n-1} \ge \frac{1}{2(n-1)},
	 \end{align}
	 for $\delta \le \frac{1}{2}$.

	 Since $\tilde\rho^\star-\tilde\rho^k \in ({\mathring{\C}}_{\rm per}^d)^{n-1}$, we have
	 \begin{align*}
	 	\sum_{\ell\in \{1,\ldots,N\}^d}\sum_{i=1}^{n-1} \rho^\star_{i,\ell} = \sum_{\ell\in \{1,\ldots,N\}^d}\sum_{i=1}^{n-1} \rho^k_{i,\ell} \le N^d(1-\rho_{\min}^k)
	 \end{align*}
	 with 
	 \begin{align*}
	 	\rho_{\min}^k = \min_{\substack{i=1,\ldots,n, \\ \ell\in \{1,\ldots,N\}^d}} \rho_{i,\ell}^k.
	 \end{align*}
	 Suppose $\sum_{i=1}^{n-1}\rho^\star_{i,\ell}$ achieves its minimum at $\ell^1$, then we have
	 \begin{align*}
	 	\sum_{i=1}^{n-1}\rho^\star_{i,\ell^1} &\le \frac{1}{N^d-1}( N^d(1-\rho_{\min}^k) - (1-\delta)) 
		\\
		&\le 1 - \frac{N^d\rho_{\min}^k-\delta}{N^d-1}\le 1 - \frac{2N^d-1}{2(N^d-1)}\rho_{\min}^k.
	 \end{align*}
	 if $\delta \le \frac{1}{2}\rho_{\min}^k$.

	 We take 
	 	 \begin{align*}
	 	\nu_{i,\ell}=\left\{\begin{array}{cl}
	 		-1, &\text{for }i=i_0,\; \ell=\ell^0,\\
	 		1, &\text{for }i=i_0,\; \ell=\ell^1,\\
	 		0, &\text{otherwise},
	 	\end{array}\right.
	 \end{align*}
	 and use the above inequality together with \eqref{eq:bb1},\eqref{eq:bb2} to obtain
	 \begin{align*}
	 	&\left. \frac{1}{h}\frac{d}{ds} J(\tilde\rho^\star + s\nu) \right|_{s=0} \nonumber\\
	 	&\quad = -\frac{1}{\Delta t} (\mathcal{L}^{-1}_{\hat{D}^k}(\tilde\rho^\star-\tilde\rho^k))_{i_0,\ell_0}  -  \log \rho^\star_{i_0,\ell_0}+ \log \left(1-\sum_{i=1}^{n-1}\rho^\star_{i,\ell_0}\right)\nonumber\\
	 	&\qquad+\frac{1}{\Delta t} (\mathcal{L}^{-1}_{\hat{D}^k}(\tilde\rho^\star-\tilde\rho^k))_{i_0,\ell_1}  +  \log \rho^\star_{i_0,\ell_1} - \log \left(1-\sum_{i=1}^{n-1}\rho^\star_{i,\ell_1}\right)\nonumber\\
	 	&\quad\le \frac{4C}{\lambda_{\min}^k\Delta t} h^{-\frac{1}{2}}(n-1)^{\frac12} - \log \frac{1}{2(n-1)} + \log \delta 
	 	+ \log 1 - \log \frac{2N^d-1}{2(N^d-1)} \rho_{\min}^k.
	 \end{align*}
	 Taking 
	 \begin{align}\label{eq:dchoice2}
	 	\delta_0 \le \min\left\{ \frac{(2N^d-1)\rho_{\min}^k}{8(N^d-1)(n-1)} e^{-\frac{4C}{\lambda_{\min}^k\Delta t} h^{-\frac{1}{2}}(n-1)^{\frac12}}, 
		                    \frac{1}{2}\rho_{\min}^k,  \frac{1}{4(n-1)}   \right\}
	 \end{align}
	 leads to
	 \[\left. \frac{1}{h^d}\frac{d}{ds} J(\tilde\rho^\star + s\nu) \right|_{s=0}\le -\log 2 <0,\]
	 which contradicts to the assumption that $\tilde\rho^\star$ is a minimizer. 

	 We conclude that there exists a $\delta_0$, which can be chosen to be the smaller value of \eqref{eq:dchoice1}, \eqref{eq:delta0choice3} and \eqref{eq:dchoice2} that only depends on $h,\Delta t,\rho^k$ and the domain, such that the minimizer of \eqref{eq:opd} cannot touch the boundary.

	 To prove the equivalence of the numerical scheme with the minimizer of the optimization problem \eqref{eq:opd}, we follow Step 3 of the proof of Theorem \ref{theorem} for the one dimensional case. We omit the details here.
	 \end{proof}

	 Theorem \ref{thm:propd} is then proved in a fashion similar to the proof of Theorem \ref{thm:prop}.

\section{Proof of consistency}
Here we present detailed calculations of the truncation error defined by 
\begin{align*}
    &\tau_{i}^1 = \frac{P_i^{k+1}-P_i^k}{\Delta t} +d_h(\hat{P}_i^k V_i^{k+1}), \\
	&\tau_i^2 = D_h \log P_i^{k+1} - \frac{1}{\sum_{j=1}^n \hat{P}_j^k}\sum_{i=1}^n \hat{P}_i^k D_h \log P_i^{k+1} +\sum_{j=1}^n b_{ij}  \hat{P}_j^k (V_i^{k+1}-V_j^{k+1}),\\
	&\tau_i^3=\sum_{i=1}^n \hat{P}_i^k V_i^{k+1}.
\end{align*}

We first calculate $\tau^1_i$. 
\begin{align*}
    \tau_{i,\ell}^1 =&\frac{P_{i,\ell}^{k+1}-P_{i,\ell}^k}{\Delta t} +d_h\left(\hat{P}_{i}^k V_{i}^{k+1}\right)_\ell \\
    =& \frac{P_{i,\ell}^{k+1}-P_{i,\ell}^k}{\Delta t} +\frac{1}{h}\left(\hat{P}_{i,\ell+\frac{1}{2}}^k V_{i,\ell+\frac{1}{2}}^{k+1} - \hat{P}_{i,\ell-\frac{1}{2}}^k V_{i,\ell-\frac{1}{2}}^{k+1} \right) , \\
    =& \frac{P_{i,\ell}^{k+1}-P_{i,\ell}^k}{\Delta t} +\frac{1}{2h}\left(({P}_{i,\ell}^k+P_{i,\ell+1}^k) V_{i,\ell+\frac{1}{2}}^{k+1} - ({P}_{i,\ell}^k + P_{i,\ell-1}^k) V_{i,\ell-\frac{1}{2}}^{k+1} \right) .
\end{align*}
The terms in the above equation can be calculated using Taylor's expansion as
\begin{align*}
    P_{i,\ell}^{k+1} =& P_{i,\ell}^k + \partial_t P_{i,\ell}^k \Delta t + O(\Delta t^2),\\
    P_{i,\ell\pm 1}^k =& P_{i,\ell}^k \pm h \partial_x P_{i,\ell}^k + \frac12 h^2 \partial_{xx} P_{i,\ell}^k+O(h^3),\\
    V_{i,\ell\pm \frac12}^{k+1} =& V_{i,\ell}^k \pm \frac{1}{2}h\partial_x V_{i,\ell}^k + \Delta t \partial_t V_{i,\ell}^k + \frac{1}{4}h^2\partial_{xx}V_{i,\ell}^k + \frac{1}{2}\Delta t^2 V_{i,\ell}^k \pm \frac{1}{2}h\Delta t \partial_{xt}V_{i,\ell}^k \\
    &+ O(h^3+\Delta t h^2+\Delta t^2 h + \Delta t^3).
\end{align*}
Taking these expressions into the previous equation leads to 
\begin{align*}
    \tau_{i,\ell}^1 =&  \partial_t P_{i,\ell}^k  
   -\frac{1}{2h} \left(2h\partial_xP_{i,\ell}^k \left(V_{i,\ell}^k  + \Delta t \partial_t V_{i,\ell}^k + \frac{1}{4}h^2\partial_{xx}V_{i,\ell}^k + \frac{1}{2}\Delta t^2 V_{i,\ell}^k\right)\right) \\
    &- \frac{1}{2h} \left(2P_{i,\ell}^k + \frac12 h^2\partial_{xx}P_{i,\ell}^k\right) \left(h\partial_x V_{i,\ell}^k + h\Delta t\partial_{xt}V_{i,\ell}^k\right) \\
    &+ O(\Delta t +h^2+\Delta th +\Delta t^2 +{\Delta t^3}) \\
    =& (\partial_t P - \partial_x(P V))_{i,\ell}^k +  O(\Delta t +h^2+\Delta t^2 + \Delta t h+ {\Delta t^3}).
\end{align*}
The terms $\tau^2$ and $\tau^3$ can be also approximated agin using the Taylor expansion. The results are
\begin{align*}
	    \tau_{i,\ell+\frac12}^2
	     =& \frac{\partial_x P_{i,\ell}^k}{P_{i,\ell}^k}+\sum_{j=1}^n b_{ij} P_{j,\ell}^k (V_{i,\ell}^k-V_{j,\ell}^k) + \frac{h}{2} \bigg[ \frac{\partial_{xx}P_{i,\ell}^k}{P_{i,\ell}^k} - \frac{(\partial_xP_{i,\ell}^k)^2}{(P_{i,\ell}^k)^2} \\
&-\sum_{i=1}^n\bigg(\frac{(\partial_xP_{i,\ell}^k)^2}{P_{i,\ell}^k}+\partial_{xx}P_{i,\ell}^k - \frac{(\partial_xP_{i,\ell}^k)^2}{P_{i,\ell}^k}\bigg) \\
&+\sum_{j=1}^n b_{ij}\left(\partial_xP_{j,\ell}^k (V_{i,\ell}^k-V_{j,\ell}^k) + P_{j,\ell}^k( \partial_xV_{i,\ell}^k-\partial_xV_{j,\ell}^k)\right)\bigg]  + O(\Delta t+ h^2) \\
=& 0 +  \frac{h}{2} \partial_x \left(\frac{\partial_x P_{i,\ell}^k}{P_{i,\ell}^k}-  \sum_{j=1}^n b_{ij} P_{j,\ell}^k (V_{i,\ell}^k-V_{j,\ell}^k)\right) + O(\Delta t + h^2) \\
=& O(\Delta t + h^2).
\end{align*}
\begin{align*}
	 \tau_{i,\ell+\frac12}^3 =&\sum_{i=1}^n P_{i,\ell}^k V_{i,\ell}^k + \frac{1}{2} h \sum_{i=1}^n \partial_x(P_{i,\ell}^k V_{i,\ell}^k) + \Delta t\sum_{i=1}^n P_{i,\ell}^k\partial_tV_{i,\ell}^k + O(\Delta t^2 +h^2) \\
    =& 
    O(\Delta t +h^2) .
\end{align*}
In summary, we conclude the result stated in Lemma \ref{lema}, i.e.,    there exists $C>0$ depending on  $(P,V)$
so that  
\begin{align*}
            |\tau_{i,\ell}^1|,~|\tau_{i,\ell+\frac12}^2|,~|\tau_{i,\ell+\frac12}^3| \le C(\Delta t + h^2).
\end{align*}

\end{document}